\DeclareMathAlphabet{\mathpzc}{OT1}{pzc}{m}{it}
\newtheorem{theorem}{Theorem}[section]
\newtheorem{lem}[theorem]{Lemma}
\newtheorem{pro}[theorem]{Proposition}
\theoremstyle{definition}
\newtheorem{example}[theorem]{Example}
\newtheorem{rmk}[theorem]{Remark}
\newtheorem{thm}[theorem]{Theorem}
\newtheorem*{assA*}{Assumption A}
\numberwithin{equation}{section}
\newcommand{\e}{\epsilon}
\begin{document}

\title[LDP for weak approximation to Gaussian processes]
{Functional large deviations for Stroock's approximation to a class of Gaussian processes with application to small noise diffusions}

\author[H.~Jiang]{Hui Jiang}
\address[H.~Jiang]{School of Mathematics, Nanjing University of Aeronautics and Astronautics, China.}
\email{huijiang@nuaa.edu.cn}

\author[L.~Xu]{Lihu Xu}
\address[L.~Xu]{1. Department of Mathematics, Faculty of Science and Technology, University of Macau, Macau S.A.R., China. 2. Zhuhai UM Science \& Technology Research Institute, Zhuhai, China}
\email{lihuxu@um.edu.mo}

\author[Q.~Yang]{Qingshan Yang}
\address[Q.~Yang]{School of Mathematics and Statistics, Northeast Normal University, China.}
\email{yangqr66@gmail.com}

\keywords{Stroock's approximation to Brownian motion, functional large deviations principle (LDP), Freidlin-Wentzell type LDP, phase transition}

\makeatletter
\@namedef{subjclassname@2020}{%
  \textup{2020} Mathematics Subject Classification}
\makeatother

\subjclass[2020]{60H10; 37M25; 60G51; 60H07; 60H35; 60G52.}

\begin{abstract}
Letting~$N=\left\{N(t), t\geq0\right\}$ be a standard Poisson process, Stroock~ \cite{Stroock-1981} constructed
a family of continuous processes by
$$\Theta_{\epsilon}(t)=\int_0^t\theta_{\epsilon}(r)dr, \ \ \ \ \ 0 \le t \le 1,$$
where $\theta_{\epsilon}(r)=\frac{1}{\epsilon}(-1)^{N(\epsilon^{-2}r)}$, and proved that it weakly converges to a standard Brownian motion under the continuous function topology.
We establish the functional large deviations principle (LDP) for the approximations of a class of Gaussian processes constructed by integrals over $\Theta_{\epsilon}(t)$, and find the explicit form for rate function.

As an application, we consider the following (non-Markovian) stochastic differential equation
\begin{equation*}
\begin{aligned}
X^{\epsilon}(t)
&=x_{0}+\int^{t}_{0}b(X^{\epsilon}(s))ds+\lambda(\epsilon)\int^{t}_{0}\sigma(X^{\epsilon}(s))d\Theta_{\epsilon}(s),
\end{aligned}
\end{equation*}
where $b$ and $\sigma$ are both Lipschitz functions, and establish its Freidlin-Wentzell type LDP as $\epsilon \rightarrow 0$. The rate function reads as
\begin{equation*}
\begin{aligned}
\mathcal{J}(x)=&\inf\left\{\int^{1}_{0}\Gamma(\phi(r))dr; ~\phi\in L^2([0,1]) {\rm \ is \ such \ that \ } \right.\\
&\left.\quad x(t)=x_{0}+\int^{t}_{0}b(x(s))ds+\int^{t}_{0}\sigma(x(s))\phi(s)ds \right\}.
\end{aligned}
\end{equation*}
where (1). if $\lim_{\epsilon \rightarrow 0} \frac{\lambda(\epsilon)}{\epsilon}=\kappa>0$,
\begin{equation*}
\begin{aligned}
\Gamma(x)=\left\{\begin{array}{ll}
1-\sqrt{1-\frac{x^{2}}{\kappa^{2}}}, & ~|x|\leq \kappa,\\
+\infty, & |x|>\kappa;\\\end{array}\right.
\end{aligned}
\end{equation*}
 (2). if $\lim\limits_{\epsilon\to0}\lambda(\epsilon)=0, \lim\limits_{\epsilon\to0}\frac{\lambda(\epsilon)}{\epsilon}=+\infty,$
$$\Gamma(x)=\frac{x^{2}}{2},  \ \ \ \ x\in\mathbb R.$$
The rate function clearly indicates a phase transition phenomenon as $\lambda(\epsilon)$ moves from one region to the other.   
\end{abstract}

\maketitle

\section{Introduction}

Letting~$N=\left\{N(t), t\geq0\right\}$ be a standard Poisson process, Stroock~(\cite{Stroock-1981}) constructed
a family of continuous process~
$\Theta_{\epsilon}=\left\{\Theta_{\epsilon}(t), 0\leq t\leq1\right\}$ as the following:
$$
\Theta_{\epsilon}(t)=\int_0^t\theta_{\epsilon}(r)dr, \quad \theta_{\epsilon}(t)=\frac{1}{\epsilon}(-1)^{N(\epsilon^{-2}t)},\quad 0 \leq t\leq1,
$$
and showed that it weakly converges to a standard Brownian motion~$W=\left\{W(t), 0\leq t\leq1\right\}$
in the space~$C\big([0,1]\big)$.
There have been a lot of researches following the ideas in Stroock's pioneering work.  Delgado and Jolis \cite{DJ-2000} constructed the following Gaussian process:
\begin{equation}\label{def-Y-epsilon}
G_{\epsilon}(t)
=\int_0^1K(t,r)\theta_{\epsilon}(r)dr,
\quad 0\leq t\leq1,
\end{equation}
where $K:[0,1] \times [0,1] \rightarrow \mathbb R$ is a deterministic kernel function, and
 proved that $G_\epsilon(t)$ weakly converges in $C([0,1])$ to
\begin{equation}\label{def-Y}
G(t)=\int_0^1K(t,r)dW(r), \ \ \ 0\le t \le 1.
\end{equation}
Jiang and Yang \cite{JY-2021} considered the functional moderate deviations associated to this convergence.
For more related works, we refer the reader to \cite{BJ-2000,  WYY-2013} for the weak approximation of the Brownian sheet from a Poisson process in the plane or random walk,
to \cite{DL-2011} for the approximation of fractional Brownian motion,
and to \cite{BBR-2016, BJQ-2010, BMQ-2020, BNRT-2010, DJQ-2013} for the approximation of stochastic (partial) differential equations.

To the best of our knowledge, there are no works on LDP for these Stroock type approximations. We study in this paper the functional LDP of $G_{\epsilon}$ and Freidlin-Wentzell type LDP for stochastic differential equation (SDE) driven by $\Theta_{\epsilon}$. The first main result, Theorem \ref{thm-main-result} below, shows that $(\e G_{\epsilon}(t))_{0 \le t \le 1}$ satisfies a functional LDP with a rate function quite different from the classical Schilder type \cite[Chapter 5]{DZ-1998}.
More generally and importantly, we establish in Theorem \ref{thm-multiplicative noise} below Freidlin-Wentzell type LDP for SDE driven by $\Theta_{\epsilon}$, as the parameter $\lambda(\e)$ moves from one regime to the other, the rate function indicates an interesting phase transition.

Theorem \ref{thm-main-result} is proved by a standard method in LDP, discretizing $G_{\e}(t)$ and showing the LDP for the discretizaiton and  {the exponential tightness in continuous function topology}, while the proof of Theorem \ref{thm-multiplicative noise} is much more involved and the difficulties are
summarized as the following four aspects. (i) Because the joint process $(X^\e(t), \theta_\e(t))$ is Markov but its marginal $X^\e(t)$ is not, it seems difficult to apply the well known weak convergence method for LDP developed by Budhirija and Dupuis \cite{Bu-Du-2000}.  (ii) In order to handle the driven noise $\Theta_{\epsilon}$, we decompose it into a martingale and a remainder. To bound the terms related to this remainder,
we need to carefully analyze the behavior of $\theta_\e$ and use the properties of the drift $b$ and diffusion coefficient $\sigma$. (iii) Due to fast switches between $-1$ and $+1$, we use a Gaussian integrability type result, see for instance {\cite[Condition A18 and Theorem A19]{FV-2010} }to prove exponential tightness of $(X^\e(t))_{0 \le t \le 1}$.  Thanks to this exponential tightness, rather than the standard method of discretizing SDE, we directly compare the SDE with a stochastic process $\hat X^\e$. In this comparison, we need to estimate several small ball probabilities, in which stopping times play a crucial role in the `divide and conquer' procedure. (iv) The developed concentration inequalities in Section \ref{s:a} has independent interests and can hopefully be used in future.
 \vskip 3mm

%In this paper, we are concerned with the path LDPs for the process
%~$G_{\epsilon}$ in $C([0,1])$ with the uniform topology.
%XXXXX \\

Throughout this paper the deterministic kernel $K: [0,1]\times[0,1]\to\mathbb R$ satisfies the following conditions:

{\bf (H1)} $K$ is measurable and $K(0,r)=0$ for any $r\in[0,1]$.

{\bf (H2)} There exists a strictly increasing and continuous function $\mathcal{G}:[0,1]\to\mathbb R$ and a constant $\alpha>0$ such that for any $0\leq s<t\leq1$,
\begin{equation*}
\begin{aligned}
\int^{1}_{0}(K(t,r)-K(s,r))^{2}dr\leq(\mathcal{G}(t)-\mathcal{G}(s))^{\alpha}.
\end{aligned}
\end{equation*}
 %\begin{rem}
 It is easy to check that the typical Gaussian processes such as Ornstein-Uhlenbeck (OU) process and fractional Brownian motion satisfy the conditions {\bf (H1)} and {\bf (H2)}. More precisely,  if $K(t,r)=e^{\lambda(r-t)}$ with $\lambda>0$, $G$ is an Ornstein-Uhlenbeck process; if $K(t,r)=K^{H}(t,r)$ defined as follows

\begin{equation}\label{FBM-kernal}
\begin{aligned}
K^{H}(t,r)=\left\{\begin{array}{ll}
c_{H}r^{\frac{1}{2}-H}1_{[0,t]}(r)\int_r^t(u-r)^{H-\frac{3}{2}}u^{H-\frac{1}{2}}du,& \frac{1}{2}<H<1;\\
\tilde{c}_{H}1_{[0,t]}(r)\left(\big(t/r\big)^{H-\frac{1}{2}}(t-r)^{H-\frac{1}{2}}\right.\\
\left.-\left(H-\frac{1}{2}\right)r^{\frac{1}{2}-H}\int_r^t(u-r)^{H-\frac{1}{2}}u^{H-\frac{3}{2}}du\right),& 0<H<\frac{1}{2},\\\end{array}\right.
\end{aligned}
\end{equation}

where
$$
c_H=\left(\frac{H(2H-1)}{\beta\left(2-2H, H-\frac{1}{2}\right)}\right)^{1/2},~
\tilde{c}_H=\left(\frac{2H}{(1-2H)\beta\left(1-2H, H+\frac{1}{2}\right)}\right)^{1/2},
$$
the process ~$G$ is a fractional Brownian motion with Hurst index~$H$ ~(Chapter~5 in ~\cite{Nualart}).
%\end{rem}

For a closed interval $E \subset \mathbb R$, we denote by $C(E)$ the set of all continuous functions from $E$ to $\mathbb R$, by $C^1(E)$ the set of all functions $f$ such that $f$ and $f'$ are both bounded continuous, and by $L^2(E)$ the set of all functions $f$ such that $\int_E |f(x)|^2 dx<\infty$. For any bounded measurable function $f: E \rightarrow \mathbb R$, define
$\|f\|=\sup_{x \in E} |f(x)|$. Furthermore, we define $|x|=\sqrt{x_1^2+...+x_k^2}$ for any $x \in \mathbb R^k$.

The remainder of this paper is organized as follows.
In Section \ref{s:2}, we will give the two main results, one being proved in Section \ref{s:3} and the other one in Section \ref{s:4}.  Section \ref{s:a} is the appendix which includes the proofs of the functional LDP for
$\epsilon \Theta_{\epsilon}$ and the auxiliary estimates to be used in Section \ref{s:4}.

\section{Main results} \label{s:2}

%Recall $G_{\epsilon}(\cdot)$,

Our first main result is about the large deviations of $\{\epsilon G_{\epsilon}(\cdot),\epsilon>0\}$ at the scale $\epsilon$. %As shown below, although $\Theta_\epsilon(.)$ converges to Brown motion in $C([0,1])$ topology, the rate function of $\epsilon \Theta_\epsilon$ is quite different from the classical .
\begin{thm}\label{thm-main-result}
Under assumptions~{\bf (H1)} and~{\bf (H2)}, the family~$\{\epsilon G_{\epsilon}(\cdot),\epsilon>0\}$ satisfies the functional large deviation in $C([0,1])$,
with speed~$\epsilon^{2}$ and rate function ~$I$ defined for ~$f\in C([0,1])$
\begin{equation}\label{rate function-main result}
I(f)=\inf\left\{\int^{1}_{0}\Lambda^{*}(\phi(r))dr; ~f(t)=\int^{1}_{0}K(t,r)\phi(r)dr,~\phi\in L^2([0,1]),~t\in[0,1]\right\},
\end{equation}
where
\begin{equation}\label{def-rate function-1}
\Lambda^{*}(x)=\left\{\begin{array}{ll}
1-\sqrt{1-x^{2}}, & ~|x|\leq1;\\
+\infty, & |x|>1.\\\end{array}\right.
\end{equation}
Explicitly, for any Borel measurable set~$A\subseteq C([0,1])$,
$$
-\inf_{f\in A^{\circ}}I(f)\leq
\limsup_{\epsilon\to 0}\epsilon^{2}\log \mathbb{P}\left(\epsilon G_{\epsilon}(\cdot)\in A\right)
\leq-\inf_{f\in \overline{A}}I(f).
$$
\end{thm}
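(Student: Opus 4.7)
The plan is a two-stage bootstrap. First, I establish a Schilder-type functional LDP for the base process $\epsilon\Theta_\epsilon$ itself in $C([0,1])$ (this is deferred to the appendix). Then I transfer this LDP to $\epsilon G_\epsilon$ via discretization of the kernel $K$, the contraction principle, and exponential-approximation plus exponential-tightness arguments in $C([0,1])$. For the first stage, observe that $|\epsilon\theta_\epsilon|\equiv 1$ forces the paths of $\epsilon\Theta_\epsilon$ to be $1$-Lipschitz, so exponential tightness is automatic. The finite-dimensional LDP follows from G\"artner--Ellis: writing $\epsilon\Theta_\epsilon(t) = \epsilon^{2}\int_{0}^{\epsilon^{-2}t}(-1)^{N(s)}\,ds$ and solving the $2\times 2$ Feynman--Kac ODE for the telegraph process $(-1)^{N(\cdot)}$ yields
\[
\lim_{\epsilon\to 0}\epsilon^{2}\log E\bigl[\exp(\lambda\epsilon^{-2}\,\epsilon\Theta_\epsilon(t))\bigr] = t\bigl(\sqrt{1+\lambda^{2}}-1\bigr),
\]
whose Legendre transform is precisely $t\Lambda^{*}(\cdot)$ from \eqref{def-rate function-1}. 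Combined with independence of Poisson increments, this gives a functional LDP for $\epsilon\Theta_\epsilon$ with rate function $I_{0}(g) = \int_{0}^{1}\Lambda^{*}(g'(r))\,dr$ on absolutely continuous $g$ with $g(0)=0$, and $+\infty$ otherwise.

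For the second stage, partition $[0,1]$ by $r_i = i/n$, set $K_{n}(t,r) = K(t, r_{i-1})$ for $r\in[r_{i-1}, r_i)$, and define
\[
\epsilon G_\epsilon^{n}(t) := \int_{0}^{1} K_{n}(t,r)\,\epsilon\theta_\epsilon(r)\,dr = \sum_{i=1}^{n} K(t, r_{i-1})\bigl[\epsilon\Theta_\epsilon(r_i) - \epsilon\Theta_\epsilon(r_{i-1})\bigr].
\]
Since this is a continuous map on $C([0,1])$ applied to $\epsilon\Theta_\epsilon$, the contraction principle gives an LDP for $\{\epsilon G_\epsilon^{n}\}$ with rate function $I_{n}(f) = \inf\{\int_{0}^{1}\Lambda^{*}(\phi(r))\,dr : f(t) = \int_{0}^{1}K_{n}(t,r)\phi(r)\,dr\}$. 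Using $\Lambda(\lambda)\leq \lambda^{2}/2$, a cumulant-expansion argument delivers a subgaussian-type bound for $\int_{0}^{1}h(r)\,\epsilon\theta_\epsilon(r)\,dr$ in terms of $\|h\|_{L^{2}}$ at the scale $\epsilon^{-2}$. Combined with (H2) and a Garsia--Rodemich--Rumsey / chaining estimate driven by $(\mathcal{G}(t)-\mathcal{G}(s))^{\alpha}$, this yields both exponential tightness of $\{\epsilon G_\epsilon\}$ in $C([0,1])$ and the exponential approximation
\[
\limsup_{\epsilon\to 0}\epsilon^{2}\log P\bigl(\|\epsilon G_\epsilon - \epsilon G_\epsilon^{n}\|_{\infty} > \delta\bigr)\ \xrightarrow[n\to\infty]{}\ -\infty
\]
for every $\delta > 0$. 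The Dembo--Zeitouni exponential-approximation lemma then promotes the $I_{n}$-LDPs to an LDP for $\epsilon G_\epsilon$, and a weak $L^{2}$-compactness argument identifies the resulting rate function with \eqref{rate function-main result}.

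The main obstacle is the uniform-in-$t$ exponential approximation: producing a subgaussian bound for $\int_{0}^{1}(K(t,r)-K_{n}(t,r))\,\epsilon\theta_\epsilon(r)\,dr$ at the scale $\epsilon^{-2}$ is routine pointwise in $t$, but upgrading it to a supremum bound over $t\in[0,1]$ requires a chaining argument whose metric entropy is pinned by (H2). A secondary subtlety is identifying $\lim_{n}I_{n}$ with $I$ in \eqref{rate function-main result}: one must show that $I_{n}$-near-minimizers $\phi_{n}$ are weakly relatively compact in $L^{2}$ (the crucial fact being $\Lambda^{*}(x)\geq x^{2}/2$), that weak limits preserve the rate integral (by convexity and lower semicontinuity of $\Lambda^{*}$), and that one may pass to the limit in the constraint $f(t) = \int_{0}^{1}K_{n}(t,r)\phi_{n}(r)\,dr$ \emph{uniformly in $t$}, where (H1)--(H2) again provide the needed equicontinuity of the integral operator against weak $L^{2}$ limits.
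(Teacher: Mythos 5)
Your overall architecture (base LDP for $\epsilon\Theta_\epsilon$, then transfer to $\epsilon G_\epsilon$ with exponential tightness driven by {\bf (H2)} and the subgaussian bound) parallels the paper, but the kernel discretization you build the transfer on does not work under {\bf (H1)}--{\bf (H2)}. You set $K_n(t,r)=K(t,r_{i-1})$ for $r\in[r_{i-1},r_i)$. The hypotheses only say that $K$ is measurable with $K(t,\cdot)\in L^2([0,1])$ and that $t\mapsto K(t,\cdot)$ is continuous in the $L^2(dr)$ sense; nothing at all is assumed about regularity of $K$ in the $r$ variable (the fractional kernel \eqref{FBM-kernal} is a standard example with singularities in $r$). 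Consequently the point values $K(t,r_{i-1})$ are not even well defined as functionals of the equivalence class $K(t,\cdot)$, and $\int_0^1|K(t,r)-K_n(t,r)|^2dr$ need not tend to $0$ for a single $t$, let alone uniformly in $t$. Since the only exponential control available on the error $\int_0^1(K(t,r)-K_n(t,r))\epsilon\theta_\epsilon(r)dr$ is through its $L^2(dr)$ norm (Lemma \ref{lem-exponential estimation}), your claimed exponentially good approximation in sup norm has no basis; moreover the chaining you invoke is pinned by {\bf (H2)}, which bounds $t$-increments of $K$ but says nothing about $K-K_n$ (note $\int_0^1(K_n(t,r)-K_n(s,r))^2dr=\sum_i(K(t,r_{i-1})-K(s,r_{i-1}))^2(r_i-r_{i-1})$ is not controlled by the integral bound in {\bf (H2)}). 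Finally, $t\mapsto K(t,r_{i-1})$ is merely measurable, so $\epsilon G_\epsilon^n$ need not belong to $C([0,1])$ and the map you feed into the contraction principle is not a continuous map into $C([0,1])$.

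The repair, which is the paper's route, is to drop the path-level approximation entirely: approximate $K(t_i,\cdot)$ only at finitely many fixed times $t_1,\dots,t_k$ by finite-variation functions $K_m(t_i,\cdot)$, which are dense in $L^2(dr)$, so that Lemma \ref{lem-exponential estimation} makes them exponentially good approximations (Lemma \ref{lem-Y-Ym}); obtain the finite-dimensional LDP by integration by parts and contraction from Proposition \ref{functional LDP}; and then combine with exponential tightness of $\{\epsilon G_\epsilon\}$ -- which you do propose correctly and essentially as in the paper, via the time change $\mathcal G^{-1}$ and the Friz--Victoir (GRR/Besov) embedding -- through the Dembo--Zeitouni machinery (Theorems 4.2.4, 4.2.16 and 4.6.1). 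Your sup-norm approximation is thus both unobtainable with your $K_n$ and unnecessary. Two smaller points: your Feynman--Kac/G\"artner--Ellis computation for $\epsilon\Theta_\epsilon$ does give the correct limit $\sqrt{1+\lambda^2}-1$, whose Legendre transform is \eqref{def-rate function-1}, but ``independence of Poisson increments'' is not enough for the multi-time cumulant, since each increment of $\Theta_\epsilon$ remembers the sign $(-1)^{N}$ at its left endpoint; one must condition on the Markov state and use the evenness of $\Lambda$, as in Step 2 of the paper's appendix. The identification of the limiting rate with \eqref{rate function-main result} via weak $L^2$ compactness of near-minimizers (using $\Lambda^*(x)\ge x^2/2$, or simply $|\phi|\le1$ a.e.\ on the finite-rate domain) and lower semicontinuity is sound and matches the paper's compactness argument.
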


%According the form of $K_{\epsilon}$, it is natural to consider showing Theorem \ref{thm-main-result} by the contraction principle as the following: first proving path LDP for $\Theta_{\epsilon}$ and then showing
%$K$ is continuous with respect to $\Theta$ under the topology of $C([0,1])$. However, under the conditions {\bf (H1)} and {\bf (H2)}, the above continuity is not necessarily true, for instance, xxx....
%$$XXXX$$

%The proof of this theorem is by a standard procedure of proving path LDPs \cite{}, i.e., xxxx.

%{\color{red}  This passage can be removed.
%It is noted that \cite{Guillin-2001} studied the moderate deviations for a class of non-homogeneous Markov processes by the splitting methods, i.e.,
%\begin{equation*}
%\frac{1}{\epsilon h(\epsilon)}\int^{t}_{0}f(s,\xi_{s/\epsilon})ds, t\in[0,1],
%\end{equation*}
%where $f$ is assumed to be bounded measurable and satisfies the modulus of continuity that $\lim_{\epsilon\to0}\frac{1}{\sqrt{\epsilon}}\sup_{|s-t|\leq\epsilon,x\in E}|f(s,x)-f(t,x)|=0$.
%}

\begin{rmk}
Under the assumptions~{\bf (H1)} and~{\bf (H2)}, for any fixed~$t\in[0,1]$, $K(t, \cdot)$ belongs to $L^{2}([0,1])$ and is not necessarily differentiable
or even continuous and bounded. Therefore, integration by parts formula may be not available for ~$G_{\epsilon}(\cdot)$.
To overcome this difficulity, we will use the exponentially good approximation
in large deviations theory \cite[Section 4.2.2]{DZ-1998}), exponential martingale technique, and Besov-L\'evy modulus embedding \cite{FV-2010}.
\end{rmk}

Here we only give three typical examples: Brownian motions, fractional Brownian motions and Ornstein-Uhlenbeck processes.  We can construct as many examples as we wish, which satisfy {\bf (H1)} and {\bf (H2)}.
 %Since the proof is similar to that in \cite{JY-2021}, we omit it.
\begin{example}\label{representation2}
 {\bf 1. (Brownian motions)}
Let $K(t,r)=\xi(t)\eta(r)1_{[0,t]}(r), ~t,r\in[0,1]$.
 Here, $\xi$ and $\eta$ are measurable functions satisfying $\xi\in C([0,1])$, $\eta\in L^{2}([0,1])$
 and $\mathcal{L}(\{\xi=0\})=0$ where $\mathcal{L}$ denotes Lebesgue measure.  We have
\begin{equation*}
\begin{aligned}
I(f)=\left\{\begin{array}{ll}
\int^{1}_{0}\Lambda^{*}\left((\frac{f}{\xi})^{'}(r)\frac{1_{\{\eta\neq0\}}(r)}{\eta^{2}(r)}\right)dr,
 &\textrm{if }\ \frac{f}{\xi}\ \textrm{is absolutly continuous and}
 \left(\frac{f}{\xi}\right)^{'}\frac{1_{\{\eta\neq0\}}}{\eta}\in L^2([0,1]);\\
 +\infty,& \textrm{otherwise. }\\\end{array}\right.
\end{aligned}
\end{equation*}
When $\xi(t) \equiv 1$ and $\eta(r) \equiv 1$, then $\Theta_{\epsilon}$ converges to Brownian motion and the corresponding
LDP rate function is
\begin{equation*}
\begin{aligned}
I(f)=\left\{\begin{array}{ll}
\int^{1}_{0}\Lambda^{*}(f^{'}(r))dr,
 &\textrm{if } f \ \textrm{is absolutly continuous and}~
f' \in L^2([0,1]);\\
 +\infty,& \textrm{otherwise.}\\\end{array}\right.
\end{aligned}
\end{equation*}

{\bf 2. (Fractional Brownian motions)} For~$K^{H}(t,r)$ given by~(\ref{FBM-kernal}),
$G(t)=\int_0^1K^{H}(t,r)dW(r)$ is a fractional Brownian motion with Hurst index~$H$.
Applying Theorem 2.1 in Decreusefond and \"Ust\"unel~(\cite{DU-1999}),
the mapping $\mathbb S$ there is an injection.
Moreover, {\bf{(H1)}} and {\bf{(H2)}} both hold with $\mathcal G(t)=Ct, C>0$, $\alpha=2H$~(\cite{DJ-2000}).
The rate function ~$I$ is given by
$$
I(f)=\left\{\begin{array}{ll}
\left\|\Lambda^{*}(\mathbb S^{-1}f)\right\|^{2}_{L^{2}([0,1])}, & ~\textrm{if}~f\in\mathbb S(L^{2}([0,1]));\\
+\infty,&~\textrm{otherwise}.\\\end{array}\right.
$$

{\bf 3. (Ornstein-Uhlenbeck processes)} For the kernel~$K(t,r)=Ce^{\lambda(r-t)}1_{(0,t]}(r)$ with~$\lambda>0$,  we have ~(H1) and~(H2) both hold with $\mathcal G(t)=Ct, C>0$~(\cite{DJ-2000}). We have $ G_t=\int_0^1Ce^{\lambda(r-t)}dW(r)$.
By Theorem~\ref{thm-main-result} and Corollary~\ref{representation2},
the rate function ~$I$ is defined by
\begin{equation*}
\begin{aligned}
I(f)=\left\{\begin{array}{ll}
\int^{1}_{0}\Lambda^{*}\left(\frac{f'(r)+\lambda f(r)}{C}\right)dr, &\textrm{if }f\textrm{ is absolutly continuous and}~\int^{1}_{0}|f'(r)|^{2}dr<+\infty;\\
+\infty,&~\textrm{otherwise}.\\\end{array}\right.
\end{aligned}
\end{equation*}
\ \\
 %Let the kernel $K(t,r)=1_{[0,t]}(r)(2\pi)^{1/2}(t-r)^{H-1/2}$, with~$0<H<1/2$.
%Furthermore, if $f(t)=\int_0^1K(t,r)\phi(r)dr$,
%then~$\phi\in L^{2}([0,1])$ (see (2.6), Page 30 in ~\cite{SKM-1993}) can be written as
%\begin{equation}\label{phi}
%\phi(t)=-\frac{1}{\sqrt{2\pi}\Gamma(\frac{1}{2}+H)\Gamma(\frac{1}{2}-H)}
%\frac{d}{dt}\left(\int^{1}_{t}\frac{f(r)dr}{(t-r)^{H+\frac{1}{2}}}\right).
%\end{equation}
%It is also known that the conditions $(H1)$ and $(H2)$ both hold with $G(t)=Ct, C>0$, $\alpha=2H$~(\cite{DJ-2000}).
%By using Theorem~\ref{thm-main-result}, the rate function ~$I$ is defined by
%\begin{equation*}
%\begin{aligned}
%I(f)=\left\{\begin{array}{ll}
%\int^{1}_{0}\Lambda^{*}(\phi(r))dr,&\textrm{if}\int^{t}_{0}\frac{f(r)}{(t-r)^{H+\frac{1}{2}}}dr\textrm{ is absolutely continuous},\lim\limits_{t\to0+}\int^{t}_{0}\frac{f(r)}{(t-r)^{H+\frac{1}{2}}}dr=0;\\
%+\infty, &~\textrm{otherwise}.\\\end{array}\right.
%\end{aligned}
%\end{equation*}
%Here,  ~$\phi\in L^{2}([0,1])$ is given  by~(\ref{phi}).
\end{example}

Stimulated by Freidlin-Wentzell type LDP, since we have proved the functional LDP for the convergence of $\Theta_{\epsilon}$ to Brownian motion, it is natural for us to consider the small noise LDP for SDEs whose driven force is $\Theta_{\epsilon}$.
\begin{equation}\label{def-sde-FW2}
\begin{aligned}
X^{\epsilon}(t)
&=x_{0}+\int^{t}_{0}b(X^{\epsilon}(s))ds+\lambda(\epsilon)\int^{t}_{0}\sigma(X^{\epsilon}(s))d\Theta_{\epsilon}(s)
\end{aligned}
\end{equation}
where~$\Theta_{\epsilon}(t)=\int_{0}^{t} \theta_{\epsilon}(s) ds$ with $\theta_{\epsilon}(t)=\frac{1}{\epsilon}(-1)^{N(\epsilon^{-2}t)}$, and $\lambda(\epsilon) \rightarrow 0$ as $\epsilon \rightarrow 0$. We further assume that

{\bf (H3)} For some ~$L\in(0,+\infty)$, we have for all $z,z'\in\mathbb R$,
\begin{equation*}
|b(z)-b(z')|+|\sigma(z)-\sigma(z')|\leq L|z-z'|.
\end{equation*}

%(C3) For some compact ~$\mathbb K\subset \mathbb X$, ~$H(z,x)=0$ for all ~$(z,x)\in\mathbb R\times\mathbb K^{c}$.

%and the Poisson process~$N$ is independent of ~$\mathcal{N}^{\lambda^{-2}(\epsilon)}(dt,dx)$.
 %Let us consider the diffusion process driven by multiplicative noise as the following:
%\begin{equation*}
%\begin{aligned}
%Z^{\epsilon}(t)&=Z(0)+\int^{t}_{0}b(Z^{\epsilon}(s-))ds+\lambda(\epsilon)\int^{t}_{0}\sigma(Z^{\epsilon}(s))dW(s), \quad \quad Z(0)\in\mathbb R^{d},
%\end{aligned}
%\end{equation*}

%For the coefficient functions ~$b$,~$\sigma$ and ~$H$,  {\color{red}for simplicity}, we just introduce the condition from section $4.1$ in Budhiraja et al.~(\cite{BDM-2011}) while the conditions can be generally relaxed for future study

Our second main result, which is more interesting than the first one, is the following Freidlin-Wentzell type LDP, and it indicates an interesting phase transition phenomenon for the rate functions as $\lambda(\epsilon)$ moves from one region to the other.
\begin{thm}\label{thm-multiplicative noise}
{Assume {\bf (H3)} holds}. For any $x\in C([0,1])$, define
\begin{equation}\label{rate function-sde}
\begin{aligned}
\mathcal{I}(x)=&\inf\left\{\int^{1}_{0}\Gamma(\phi(r))dr; ~\phi\in L^2([0,1]) \ {\rm is \ such \ that \ } \right.\\
&\left.\quad x(t)=x_{0}+\int^{t}_{0}b(x(s))ds+\int^{t}_{0}\sigma(x(s))\phi(s)ds \right\}.
\end{aligned}
\end{equation}

\noindent (1). If $\lambda(\epsilon)$ satisfies
\begin{equation}\label{def-kappa}
\lim_{\epsilon \rightarrow 0} \frac{\lambda(\epsilon)}{\epsilon}=\kappa>0,
\end{equation}
 then $\left\{X^{\epsilon}(t), ~t\in[0,1]\right\}$ satisfies the large deviations with speed $\lambda^2(\epsilon)$ and rate function $\mathcal{I}$ with
\begin{equation*}
\begin{aligned}
\Gamma(x)=\left\{\begin{array}{ll}
1-\sqrt{1-\frac{x^{2}}{\kappa^{2}}}, & ~|x|\leq \kappa;\\
+\infty, & |x|>\kappa.\\\end{array}\right.
\end{aligned}
\end{equation*}

\noindent (2). If ~$\lambda(\epsilon)$ satisfies
$$
\lim\limits_{\epsilon\to0}\lambda(\epsilon)=0, ~~ \ \ \ \ \ \lim\limits_{\epsilon\to0}\frac{\lambda(\epsilon)}{\epsilon}=+\infty,
$$
then $\left\{X^{\epsilon}(t), ~t\in[0,1]\right\}$ satisfies the large deviations with speed ~$\lambda^2(\epsilon)$ and rate function $\mathcal I$ with
$$\Gamma(x)=\frac{x^{2}}{2},  \ \ \ \ x\in\mathbb R.$$

\end{thm}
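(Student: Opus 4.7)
The plan is to reduce the Freidlin--Wentzell LDP for the non-Markovian process $X^\epsilon$ to the functional LDP for $\epsilon\Theta_\epsilon$, which is the special case of Theorem \ref{thm-main-result} corresponding to the kernel $K(t,r) = 1_{[0,t]}(r)$ (the Brownian case in Example \ref{representation2}). Since the weak convergence method of Budhiraja--Dupuis is unavailable (the marginal $X^\epsilon$ is not Markov), I first establish exponential tightness of $\{X^\epsilon\}$ in $C([0,1])$ at speed $\lambda^2(\epsilon)$: decompose $\Theta_\epsilon$ into a martingale $M_\epsilon$ obtained by compensating the switching process plus a bounded remainder, control $M_\epsilon$ via Gaussian-type exponential estimates in the spirit of \cite[Condition A18 and Theorem A19]{FV-2010}, and combine this with the Lipschitz hypothesis (H3) through a Gronwall-type argument to pick up equicontinuity at the exponential scale.

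Next, fix a partition $0 = t_0^n < \cdots < t_n^n = 1$ and define the frozen-coefficient approximation
\begin{equation*}
\hat X^{\epsilon,n}(t) = \hat X^{\epsilon,n}(t_k^n) + b(\hat X^{\epsilon,n}(t_k^n))(t - t_k^n) + \lambda(\epsilon)\sigma(\hat X^{\epsilon,n}(t_k^n))\bigl[\Theta_\epsilon(t) - \Theta_\epsilon(t_k^n)\bigr]
\end{equation*}
on $[t_k^n, t_{k+1}^n]$, so that $\hat X^{\epsilon,n}$ is a continuous functional of the finite-dimensional vector $(\Theta_\epsilon(t_k^n))_{k=0}^n$. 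The crucial estimate is the exponential equivalence
\begin{equation*}
\lim_{n \to \infty} \limsup_{\epsilon \to 0} \lambda^2(\epsilon) \log \mathbb{P}\bigl(\|X^\epsilon - \hat X^{\epsilon,n}\| > \delta\bigr) = -\infty, \qquad \delta > 0.
\end{equation*}
To prove it, I would write the difference on each subinterval as a Duhamel-type integral against $\theta_\epsilon$, introduce stopping times tied to the jump epochs of $N(\epsilon^{-2}\cdot)$, and combine small-ball probabilities for the increments $\Theta_\epsilon(t) - \Theta_\epsilon(t_k^n)$ with the exponential tightness above.

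The LDP for $\hat X^{\epsilon,n}$ then follows from the contraction principle. In Case (1), factor $\lambda(\epsilon)\Theta_\epsilon = (\lambda(\epsilon)/\epsilon)(\epsilon\Theta_\epsilon)$; since $\epsilon\Theta_\epsilon$ satisfies the LDP at speed $\epsilon^2$ with rate $\int_0^1 \Lambda^*(f'(r))\,dr$, the continuous map from $(f(t_k^n))_k$ to the piecewise solution of the discretized ODE yields an LDP at speed $\lambda^2(\epsilon)$ (using $\lambda(\epsilon)/\epsilon \to \kappa$), and the substitution $\phi = \kappa f'$ converts $\Lambda^*(\phi/\kappa) = 1 - \sqrt{1-\phi^2/\kappa^2} = \Gamma(\phi)$. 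In Case (2) the scaling $\lambda(\epsilon) \to 0$ with $\lambda(\epsilon)/\epsilon \to \infty$ places $\lambda(\epsilon)\Theta_\epsilon$ in a moderate-deviation regime; adapting the Jiang--Yang \cite{JY-2021} argument, I would derive a Schilder-type LDP at speed $\lambda^2(\epsilon)$ with rate $\tfrac12\int|f'|^2\,dr$, from which $\Gamma(x) = x^2/2$ follows by the same contraction. Passing $n \to \infty$ uses the lower semicontinuity of $\Gamma$ together with the continuity of the ODE flow $\phi \mapsto x$ under $L^2$ convergence of controls, giving the variational formula $\mathcal{I}$.

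The main obstacle will be the exponential equivalence between $X^\epsilon$ and $\hat X^{\epsilon,n}$: because $\theta_\epsilon = \pm 1/\epsilon$ blows up as $\epsilon \to 0$, naive estimates using $|\theta_\epsilon|$ are useless and one must exploit cancellations produced by rapid sign-switching. This is what forces the stopping-time \emph{divide and conquer} procedure and motivates the dedicated concentration inequalities of Section \ref{s:a}. A secondary subtlety is the critical scaling in Case (1): the perturbation $(\lambda(\epsilon)/\epsilon) - \kappa$ must be shown to be exponentially negligible at speed $\lambda^2(\epsilon)$, which amounts to continuity of the contraction map in the scale parameter.
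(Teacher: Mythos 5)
Your overall architecture is the classical Freidlin--Wentzell one: time-discretize, freeze coefficients in an Euler-type scheme $\hat X^{\epsilon,n}$, prove exponential equivalence, contract at each $n$, then let $n\to\infty$. The paper deliberately does \emph{not} discretize the SDE. Instead, for each fixed target path $\varphi$ it compares $X^\epsilon$ with the single auxiliary process $\hat X^{\epsilon}(t)=x_0+\int_0^t b(\varphi(s))ds+\lambda(\epsilon)\int_0^t\sigma(\varphi(s))\theta_\epsilon(s)ds$, whose LDP at speed $\lambda^2(\epsilon)$ follows by contraction from Theorem~\ref{thm-main-result} (regime (1)) resp.\ the moderate deviations of \cite{JY-2021} (regime (2)); it then proves \emph{local} lower and upper bounds around $\varphi$ (local bounds suffice because of the exponential tightness of Lemmas~\ref{est to uniform bound} and \ref{pro-exponential tightness-sde}), controlling the error terms $\mathcal{R}_{i,\epsilon}$ with stopping times, the Lyapunov-type comparison estimate of Proposition~\ref{comparison est lem}, and the exponential martingale inequality of Proposition~\ref{exponential inequality}. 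So your route is genuinely different in its reduction step, and in regime (1) it would likely work with little pain, since there $\lambda(\epsilon)|\theta_\epsilon(t)|=\lambda(\epsilon)/\epsilon$ stays bounded and the driver is uniformly Lipschitz.

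The genuine gap is your pivotal exponential equivalence $\lim_n\limsup_{\epsilon}\lambda^2(\epsilon)\log\mathbb P(\|X^\epsilon-\hat X^{\epsilon,n}\|>\delta)=-\infty$, which in regime (2) is exactly where all the work lies and is only asserted. On $[t_k^n,t_{k+1}^n]$ the error contains $\lambda(\epsilon)\int\big(\sigma(X^{\epsilon}(s))-\sigma(\hat X^{\epsilon,n}(t_k^n))\big)\theta_\epsilon(s)ds$ with $\lambda(\epsilon)|\theta_\epsilon|=\lambda(\epsilon)/\epsilon\to\infty$, so no Gronwall bound using $|\theta_\epsilon|$ can close, and small-ball estimates for the increments of $\Theta_\epsilon$ alone do not help because the integrand is state-dependent. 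What is needed is precisely the machinery the paper builds: insert the decomposition \eqref{Theta-theta-relation} \emph{inside} the integral so that $\sigma(X^\epsilon(s-))$ sits against the compensated Poisson measure; bound the leftover term $\frac{\lambda(\epsilon)\epsilon^2}{2}\int\sigma(X^{\epsilon}(s-))d\theta_{\epsilon}(s)$ by the telescoping-over-jump-times argument \eqref{e:Tight-Bound} (its total variation is of order $\lambda(\epsilon)/\epsilon$, so it is \emph{not} a bounded remainder once $\sigma(X^\epsilon)$ is inside -- your tightness sketch has the same soft spot); and control the martingale part on stopped intervals via Propositions~\ref{comparison est lem} and \ref{exponential inequality}, with stopping times localizing both the state and the discrepancy. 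Until those estimates are carried out, the proposal is a plan rather than a proof. Two smaller points: $\hat X^{\epsilon,n}$ as you define it is not a functional of the finite-dimensional vector $(\Theta_\epsilon(t_k^n))_k$ (it involves $\Theta_\epsilon(t)$ inside each subinterval), though contraction from the functional LDP of $\lambda(\epsilon)\Theta_\epsilon$ still applies; and in regime (1) the passage from speed $\epsilon^2$ to speed $\lambda^2(\epsilon)$ multiplies the rate by $(\lambda(\epsilon)/\epsilon)^2\to\kappa^2$, a constant you should track explicitly when identifying $\Gamma$.
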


%It maybe a question that what will happen if $\frac{\lambda_{\epsilon}}{\epsilon}\to0$. In fact, by a simple computation,
%\begin{equation*}
%\lim_{\epsilon\to0}\lambda^{2}_{\epsilon}\log\mathbb E\exp\left\{\frac{1}{\epsilon\lambda_{\epsilon}}\int^{t}_{0}(-1)^{N(\epsilon^{-2}u)}du\right\}\leq\lim_{\epsilon\to0}\frac{\lambda_{\epsilon}}{\epsilon}=0, ~t\in[0,1].
%\end{equation*}
%By the standard argument of large deviation principle, it is known that the sample path of $\left\{\frac{\lambda_{\epsilon}}{\epsilon}\int^{t}_{0}(-1)^{N(\epsilon^{-2}u)}du, t\in[0,1]\right\}$ obeys the same large deviation as the functional $0$. Obviously it is trivial, therefore we omit this case. In the rest of this paper, we just consider the non-trivial case
%$$\lim_{\epsilon\to0}\frac{\lambda_{\epsilon}}{\epsilon}=+\infty,~\mbox{or}~\lim_{\epsilon\to0}\frac{\lambda_{\epsilon}}{\epsilon}=c>0.
%$$
%For convenience, we may assume that without any loss of generality,
%$$\lim_{\epsilon\to0}\frac{\lambda_{\epsilon}}{\epsilon}=+\infty,~\mbox{or}~\lambda_{\epsilon}=\epsilon.
%$$
\begin{rmk}

As $\lim_{\epsilon\to0}\frac{\lambda(\epsilon)}{\epsilon}=0$, the LDP is trivial. In fact, for any $a\in\mathbb R$, a simple computation yields
\begin{equation*}
\lim_{\epsilon\to0}\lambda^2(\epsilon)\log\mathbb E\exp\left\{\frac{a}{\epsilon\lambda(\epsilon)}\int^{t}_{0}(-1)^{N(\epsilon^{-2}u)}du\right\}\leq\lim_{\epsilon\to0}\frac{|a|\lambda(\epsilon)}{\epsilon}=0, ~t\in[0,1].
\end{equation*}
By the standard argument of large deviation principle \cite{DZ-1998}, we can show that the sample path of $\left\{\frac{\lambda(\epsilon)}{\epsilon}\int^{t}_{0}(-1)^{N(\epsilon^{-2}u)}du, t\in[0,1]\right\}$ obeys the same large deviations as the functional $0$, thus the LDP with respect to \eqref{def-sde-FW2} is trivial.
\end{rmk}
\begin{rmk}
Because the joint process $(X^\e(t), \theta_\e(t))$ is Markov but the marginal process $X^\e(t)$ is not, it seems difficult to apply the well known weak convergence method for LDP developed by Budhirija and Dupuis \cite{Bu-Du-2000}, see more results in \cite{BDM-2011,WXZZ-2016} and the references therein for this weak convergence approach.  
\end{rmk}

\section{Proof of Theorem~\ref{thm-main-result}} \label{s:3}
We shall use the following standard strategy in the large deviation theory to prove Theorem~\ref{thm-main-result}:

(i) Show the LDP for finite dimensional distributions
$$
\mathbb{P}\left(\epsilon\Big(G_{\epsilon}(t_1),\cdots,G_{\epsilon}(t_k)\Big)^{\tau}\in\cdot\right),\quad~0<t_1<\cdots<t_k\leq1, k\geq1,
$$
where~$x^{\tau}$ denotes the transpose of ~$x\in\mathbb{R}^k$, ~$k\in\mathbb{N}$.

(ii) Show the following exponential tightness: for any~$L>0$, there exists a compact set~$K_L\subset C([0,1])$ such that
$$
\limsup_{\epsilon\to0}\epsilon^{2}\log\mathbb P\left(\epsilon G_{\epsilon}\in K^{c}_{L}\right)\leq-L.
$$
See Theorem 4.2.4 and Theorem 4.6.1 in Dembo and Zeitouni~(\cite{DZ-1998}) for more details.

%Let $N(dt)$ be a Poisson random measure with the intensity measure
%~$dt$ on $(\mathbb R_{+}, \mathcal B(\mathbb R_{+}))$.
%For any measure $\mu$ on $(\mathbb R_{+}, \mathcal B(\mathbb R_{+})\otimes\mathcal B([0,1])$, define
%$$
%\mu(t)=\mu([0,t]\times[0,1]).
%$$
%Obviously,  $\big\{N(t):=N\big([0,t]\times[0,1]\big), t\geq0\big\}$ is a version of the standard Poisson process.
%Then, the processes~$\Theta_{\epsilon}$ and~$\theta_{\epsilon}$ can be written as
%\begin{equation}\label{def-Theta-theta}
%\Theta_{\epsilon}(t)=\int^t_0\theta_{\epsilon}(s)ds,\quad
%\theta_{\epsilon}(t)=\epsilon^{-1}(-1)^{N^{\epsilon^{-2}}(t)},
%\end{equation}
%where $N^{\epsilon^{-2}}$ is a Poisson random measure with intensity density $\epsilon^{-2}dt\otimes dx$.
%Therefore, the process~$G_{\epsilon}$ can be written as
%\begin{equation}\label{Y-epsilon-represe}
%G_{\epsilon}(t)=\int_0^1K(t,r)d\Theta_{\epsilon}(r)=\int_0^1K(t,r)\theta_{\epsilon}(r)dr.
%\end{equation}

\subsection{Functional LDP of $\epsilon \Theta_{\epsilon}$} In order to use the above standard strategy, we shall first establish the functional LDP for $\epsilon \Theta_{\epsilon}$. On the one hand, in the proof of (i), we shall apply the contract principle with $\epsilon \Theta_{\epsilon}$; on the other hand, the proof of the functional LDP for $\epsilon \Theta_{\epsilon}$ is helpful to understand that of Theorem~\ref{thm-main-result}.

Rewrite
\begin{equation}\label{Main term pro}
\begin{aligned}
\epsilon \Theta_{\epsilon}(t)=\epsilon^{2}\int^{\epsilon^{-2}t}_{0}\xi(r)dr,
\end{aligned}
\end{equation}
where
$$
\xi(t):=(-1)^{N(t)}, \quad t\in[0,1],
$$
is a ~$\{-1,1\}$-valued reversible Markov process.
We shall often use this formulation of $\epsilon \Theta_{\epsilon}(t)$ below.
\begin{pro}\label{functional LDP}
The family $\{\epsilon \Theta_{\epsilon}(\cdot),\epsilon>0\}$ satisfies the functional large deviation in $C([0,1])$ as $\epsilon \rightarrow 0$ ,
with speed~$\epsilon^{-2}$ and rate function ~$J$ defined as: for ~$\varphi\in C([0,1])$
\begin{equation}\label{rate function-main result}
J(\varphi)=\inf\left\{\int^{1}_{0}\Lambda^{*}(\phi(r))dr; ~\varphi(t)=\int^{t}_{0} \phi(r)dr\in \mathcal{AC},~t\in[0,1]\right\},
\end{equation}
where
\begin{equation*}%\label{def-rate function-1}
\Lambda^{*}(x)=\left\{\begin{array}{ll}
1-\sqrt{1-x^{2}}, & ~|x|\leq1;\\
+\infty, & |x|>1\\\end{array}\right.
\end{equation*}
and~$\mathcal{AC}=\Big\{f; f~\textrm{~is absolutly continuous on}~[0,1], \int^{1}_{0}|f'(r)|^{2}dr<+\infty\Big\}$.
%for any Borel measurable set~$A\subseteq C([0,1])$,
%$$
%-\inf_{f\in A^{\circ}}I(f)\leq
%\limsup_{\epsilon\to 0}\epsilon^{2}\log \mathbb{P}\left(\epsilon G_{\epsilon}(\cdot)\in A\right)
%\leq-\inf_{f\in \overline{A}}I(f).
%$$
\end{pro}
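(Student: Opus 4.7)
The plan is to follow exactly the two-step strategy the authors outline (finite-dimensional LDP plus exponential tightness) together with a projective-limit/contraction identification of the rate function.

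\textbf{Step 1 (Exponential tightness is automatic).} Because $|\theta_{\epsilon}(t)| = 1/\epsilon$, one has $|\epsilon\Theta_{\epsilon}(t)-\epsilon\Theta_{\epsilon}(s)|\le|t-s|$ for all $0\le s,t\le 1$. Hence every realization of $\epsilon\Theta_{\epsilon}$ lies in the Arzel\`a--Ascoli compact set $K:=\{f\in C([0,1]):f(0)=0,\ \mathrm{Lip}(f)\le 1\}$, so $\mathbb{P}(\epsilon\Theta_{\epsilon}\notin K)=0$ and the required tightness estimate holds trivially at any level $L$.

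\textbf{Step 2 (Finite-dimensional LDP via G\"artner--Ellis).} Using the representation $\epsilon\Theta_{\epsilon}(t)=\epsilon^{2}\int_{0}^{\epsilon^{-2}t}\xi(r)dr$ with $\xi(t)=(-1)^{N(t)}$, fix $0=t_{0}<t_{1}<\cdots<t_{k}\le 1$ and $\vec a\in\mathbb{R}^{k}$. The key quantity is
\begin{equation*}
\Psi_{\epsilon}(\vec a)=\epsilon^{2}\log\mathbb{E}\exp\Bigl(\sum_{j=1}^{k}a_{j}\int_{\epsilon^{-2}t_{j-1}}^{\epsilon^{-2}t_{j}}\xi(r)dr\Bigr).
\end{equation*}
For $T>0$ define $u_{\pm}(T,a)=\mathbb{E}[\exp(a\int_{0}^{T}\xi(r)dr)\mid\xi(0)=\pm 1]$. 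Since $\xi$ is a two-state Markov chain with jump rate one, $(u_{+},u_{-})$ satisfies the linear system with matrix $\bigl(\begin{smallmatrix}a-1&1\\1&-a-1\end{smallmatrix}\bigr)$, whose eigenvalues are $-1\pm\sqrt{1+a^{2}}$. Hence
\begin{equation*}
\Lambda(a):=\lim_{T\to\infty}\tfrac{1}{T}\log u_{\pm}(T,a)=-1+\sqrt{1+a^{2}},
\end{equation*}
uniformly in the starting sign because the corresponding eigenvector has strictly positive components. Applying the Markov property of $\xi$ at each time $\epsilon^{-2}t_{j-1}$ and iterating yields $\Psi_{\epsilon}(\vec a)\to\sum_{j=1}^{k}(t_{j}-t_{j-1})\Lambda(a_{j})$. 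This limit is finite, smooth, strictly convex and essentially steep, so G\"artner--Ellis gives an LDP for the increment vector at speed $\epsilon^{-2}$ with rate $\sum_{j}(t_{j}-t_{j-1})\Lambda^{*}\bigl(y_{j}/(t_{j}-t_{j-1})\bigr)$. A direct Legendre computation shows $\Lambda^{*}(x)=1-\sqrt{1-x^{2}}$ for $|x|\le 1$ and $+\infty$ otherwise; the contraction principle applied to the bijective map (increments)$\mapsto$(partial sums) transfers this to the finite-dimensional marginals of $\epsilon\Theta_{\epsilon}$.

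\textbf{Step 3 (Assembling the functional LDP and identifying $J$).} Combining Steps 1 and 2 via Theorems 4.2.4 and 4.6.1 of Dembo--Zeitouni produces a functional LDP in $C([0,1])$ with good rate function
\begin{equation*}
J(\varphi)=\sup\Bigl\{\sum_{j=1}^{k}(t_{j}-t_{j-1})\Lambda^{*}\Bigl(\tfrac{\varphi(t_{j})-\varphi(t_{j-1})}{t_{j}-t_{j-1}}\Bigr):0=t_{0}<\cdots<t_{k}\le 1,\ k\ge 1\Bigr\}.
\end{equation*}
Since $\Lambda^{*}$ is convex, nonnegative and finite only on $[-1,1]$, any $\varphi$ with $J(\varphi)<\infty$ must be $1$-Lipschitz and the partition supremum is standardly identified (refining partitions and using Jensen/Fatou) with $\int_{0}^{1}\Lambda^{*}(\varphi'(r))dr$ for absolutely continuous $\varphi$, and $+\infty$ otherwise; the condition $\int_{0}^{1}|\varphi'|^{2}dr<\infty$ in the statement is automatic from $|\varphi'|\le 1$. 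Writing $\phi=\varphi'$ recovers the displayed formula for $J$.

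\textbf{Main obstacle.} None of the steps are deep, but the one requiring honest work is Step 2: establishing that the Markov-decomposition estimate gives the exact exponential rate $\Lambda(a)$ uniformly in the boundary values $\xi(\epsilon^{-2}t_{j})\in\{\pm 1\}$. The $2\times 2$ structure makes the positivity of the leading eigenvector and therefore the $O(1)$ nature of the prefactors $C_{\pm}(a)$ transparent, which kills these boundary terms after multiplying by $\epsilon^{2}$. The partition-to-integral identification in Step 3 is routine but should be done with care so that the infimum form of $J$ stated in the proposition emerges cleanly.
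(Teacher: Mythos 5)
Your proposal is correct, and its overall architecture (finite-dimensional LDP, exponential tightness via the deterministic bound $|\epsilon\Theta_{\epsilon}(t)-\epsilon\Theta_{\epsilon}(s)|\le|t-s|$, and a Mogulskii-type identification of the partition supremum with $\int_0^1\Lambda^*(\varphi')$, all assembled through the Dembo--Zeitouni machinery) coincides with the paper's. Where you genuinely diverge is in how the one-time Cram\'er data are produced. The paper deliberately avoids a direct cumulant computation (it remarks that G\"artner--Ellis ``seems not easy'' at that stage): it first invokes the Donsker--Varadhan level-2 LDP for the empirical measure of the two-state chain $\xi$, contracts to get $\Lambda^*(x)=1-\sqrt{1-x^2}$, and only then recovers $\Lambda$ as the limit of $\Lambda_{\epsilon^2}$ via Varadhan's integral lemma, which it needs for the multi-time conditioning step. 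You instead compute $\Lambda(a)=\sqrt{1+a^2}-1$ directly as the Perron eigenvalue of the tilted $2\times2$ generator (the classical telegraph-process Feynman--Kac system) and Legendre-transform; your eigenvalue computation and the resulting $\Lambda^*$ are correct. This is more elementary and self-contained (no level-2 theory, no Varadhan lemma), and the strictly positive Perron eigenvector gives exactly the $O(1)$ prefactor control, uniform in the sign $\xi(\epsilon^{-2}t_{j-1})$, that the paper obtains instead through the error terms $\mathcal{E}_{\epsilon,\pm\bar\alpha}$ together with the evenness $\Lambda(\alpha)=\Lambda(-\alpha)$; your increment-vector formulation versus the paper's $\bar\alpha_{\ell,k}$ reindexing of partial sums is an equivalent bookkeeping choice. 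What the paper's route buys is that $\Lambda^*$ appears with a probabilistic (empirical-measure) interpretation without solving any eigenvalue problem; what yours buys is a shorter, fully explicit verification of the G\"artner--Ellis hypotheses (finiteness, differentiability, hence essential smoothness on all of $\mathbb{R}^k$). The remaining steps you sketch (iterated Markov conditioning, factorization of the Legendre transform over increments, and the refinement/Jensen argument identifying the supremum over partitions with the stated infimum form of $J$, which forces $\varphi(0)=0$ and $|\varphi'|\le1$) are routine and match the paper's use of Theorem 4.6.9 and Lemmas 5.1.6, 5.1.8 of Dembo--Zeitouni.
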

%will be divided into the following three steps.
The proof is standard and thus given in the appendix.

\subsection{Proof of Theorem \ref{thm-main-result}}
\subsubsection{Finite dimensional large deviations}
In this subsection, for fixed ~$0<t_1<\cdots<t_k\leq1$,
we will study the large deviations for~$\epsilon\Big(G_{\epsilon}(t_1),\cdots,G_{\epsilon}(t_k)\Big)^{\tau}$.
Since $K(t_i,\cdot)\in L^{2}([0,1])$, there exists a sequence of finite variational functions
~$\Big\{K_{m}(t_{i},\cdot), m\in\mathbb N\Big\}$ such that
$$
\lim_{m\to\infty}\max_{1\leq i\leq k}\int_0^1\big|K(t_i,r)-K_{m}(t_i,r)\big|^2dr=0.
$$
Define
\begin{equation}\label{def-Y-epsilon}
\begin{aligned}
G_{\epsilon}(t_1,\cdots, t_k)
&=\Big(\int^{1}_{0}K(t_1,r)\theta_{\epsilon}(r)dr,\cdots, \int^{1}_{0}K(t_k,r)\theta_{\epsilon}(r)dr\Big)^{\tau}\\
&=\Big(G_{\epsilon}(t_1),\cdots,G_{\epsilon}(t_k)\Big)
\end{aligned}
\end{equation}
and
\begin{equation}\label{def-Ym-epsilon}
\begin{aligned}
G_{m,\epsilon}(t_1,\cdots, t_k)
&=\left(\int^{1}_{0}K_{m}(t_1,r)\theta_{\epsilon}(r)dr,\cdots, \int^{1}_{0}K_{m}(t_k,r)\theta_{\epsilon}(r)dr\right)^{\tau}\\
&=\Big(G_{m,\epsilon}(t_1),\cdots,G_{m,\epsilon}(t_k)\Big).
\end{aligned}
\end{equation}
We will show in Lemma \ref{lem-Y-Ym} below that $G_{m,\epsilon}(t_1,\cdots, t_k)$ is an exponentially good approximation of $G_{\epsilon}(t_1,\cdots, t_k)$
~(Section 4.2.2 in ~\cite{DZ-1998}).
To do this, the following lemma plays a crucial role in our analysis.

\begin{lem}\label{lem-exponential estimation}
(\cite[Corollary 3.1]{JY-2021}) For any ~$0\leq\eta<\frac{1}{2}$ and ~$f\in L^{2}([0,1])$, it holds that
\begin{equation*}
\sup_{f\in L^{2}([0,1]),\|f\|_{L^{2}}\neq0}\mathbb E\exp\left\{\frac{\eta\left|\int^{1}_{0}f(r)\theta_{\epsilon}(r)dr\right|^2}{\int_0^1f^2(r)dr}\right\}
<\infty.
\end{equation*}
\end{lem}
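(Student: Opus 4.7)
The plan is to reduce the quadratic exponential moment to a sub-Gaussian moment-generating-function bound via a Gaussian (chi-squared) duality, and then to establish that linear bound through a Feynman--Kac ODE analysis of the telegraph process $\xi(u)=(-1)^{N(u)}$. Writing $Y_\epsilon := \int_0^1 f(r)\theta_\epsilon(r)\,dr$ and $\sigma^2:=\|f\|_{L^2}^2$, the identity $e^{\eta y^2}=\mathbb{E}_G\,e^{\sqrt{2\eta}\,y\,G}$ (valid for $\eta\ge 0$ with $G\sim N(0,1)$ independent of $Y_\epsilon$) combined with Fubini yields
\[
\mathbb{E}\exp\!\Big(\tfrac{\eta}{\sigma^2}Y_\epsilon^2\Big) \;=\; \mathbb{E}_G\,\mathbb{E}\exp\!\Big(\tfrac{\sqrt{2\eta}\,G}{\sigma}\,Y_\epsilon\Big).
\]
It therefore suffices to establish a uniform sub-Gaussian Laplace bound $\mathbb{E}\exp(\lambda Y_\epsilon)\leq C\exp(\lambda^2\sigma^2/2)$ over $\lambda\in\mathbb{R}$, $\epsilon>0$ and $f\in L^2([0,1])$, because integrating in $G$ then produces the claimed uniform bound $C/\sqrt{1-2\eta}$ for $\eta<1/2$.

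For the linear bound, I rescale by $u=\epsilon^{-2}r$ so that $Y_\epsilon=\epsilon\int_0^T g(u)\xi(u)\,du$ with $T=\epsilon^{-2}$, $g(u)=f(\epsilon^2 u)$ and $\|g\|_{L^2([0,T])}^2=\sigma^2/\epsilon^2$. Setting $H:=\epsilon\lambda$, define the conditional Laplace transforms
\[
u_\pm(t) \;=\; \mathbb{E}\!\left[\exp\!\Big(H\int_t^T g(s)\xi(s)\,ds\Big)\,\Big|\,\xi(t)=\pm 1\right].
\]
The backward Kolmogorov equation for the two-state jump chain $\xi$ with Feynman--Kac potential $\pm Hg$ reads $-u_\pm'=\pm Hg\,u_\pm+u_\mp-u_\pm$ with $u_\pm(T)=1$. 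Writing $v=u_++u_-$ and $w=u_+-u_-$, this becomes the linear system $-v'=Hg\,w$, $-w'=Hg\,v-2w$, $v(T)=2$, $w(T)=0$; and since $\xi(0)=1$ almost surely, $\mathbb{E}\exp(\lambda Y_\epsilon)=u_+(0)\leq v(0)$.

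Positivity of $u_\pm$ forces $v>0$ and $|\rho|\leq 1$ for $\rho:=w/v$. Direct differentiation gives $(\log v)'=-Hg\rho$, so
\[
v(0) \;=\; 2\exp\!\Big(H\int_0^T g(t)\rho(t)\,dt\Big),
\]
while $\rho$ satisfies the Riccati equation $\rho'-2\rho=Hg(\rho^2-1)$ with $\rho(T)=0$. Variation of parameters gives $\rho(t)=H\int_t^T e^{-2(s-t)}g(s)(1-\rho^2(s))\,ds$. Substituting back, bounding $|1-\rho^2|\leq 1$, and applying Young's convolution inequality (the kernel $\mathbf{1}_{u>0}\,e^{-2u}$ has $L^1$-norm $1/2$) together with Cauchy--Schwarz,
\[
\Big|H\!\int_0^T\! g(t)\rho(t)\,dt\Big| \;\leq\; H^2\!\int_0^T\!|g(t)|\!\int_t^T\! e^{-2(s-t)}|g(s)|\,ds\,dt \;\leq\; \tfrac{H^2}{2}\|g\|_{L^2}^2 \;=\; \tfrac{\lambda^2\sigma^2}{2}.
\]
Hence $v(0)\leq 2\exp(\lambda^2\sigma^2/2)$, completing the linear bound.

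The most delicate point is the Riccati analysis when $f$ is only in $L^2$, so that $g$ may be unbounded and the ODE coefficients lie only in $L^2_{\mathrm{loc}}$. I would handle this by approximating $f$ by bounded continuous $f_n\to f$ in $L^2$, running the argument above for each $f_n$, and passing to the limit by Fatou together with the elementary $L^2$-continuity of $Y_\epsilon$ in $f$. A minor bookkeeping issue is the factor $2$ loss from $u_+(0)\leq v(0)$ coming from the deterministic initial state $\xi(0)=1$; this is absorbed into the constant $C$ in the sub-Gaussian bound and does not affect the finiteness claim.
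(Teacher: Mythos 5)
Your proof is correct, and it is a genuine proof rather than a deferral: the paper does not prove this lemma at all, it simply cites \cite[Corollary 3.1]{JY-2021}. What you have done is supply a self-contained argument. The two reductions are both sound: the Gaussian duality $e^{\eta y^2}=\mathbb{E}_G\,e^{\sqrt{2\eta}\,yG}$ converts the quadratic exponential moment into a sub-Gaussian Laplace bound with the sharp constant (producing $C/\sqrt{1-2\eta}$ for $\eta<\tfrac12$, which explains why the threshold is exactly $\tfrac12$), and the Feynman--Kac/Riccati analysis of the telegraph chain is carried out correctly. I checked the backward system, the symmetrized variables $v=u_++u_-$, $w=u_+-u_-$, the Riccati equation $\rho'-2\rho=Hg(\rho^2-1)$ with $\rho(T)=0$ and its variation-of-parameters representation, the a priori bound $|\rho|\leq1$ forced by positivity of $u_\pm$, and the Young--Cauchy--Schwarz step with $\|\mathbf 1_{u>0}e^{-2u}\|_{L^1}=\tfrac12$; all lines go through, yielding $\mathbb E\,e^{\lambda Y_\epsilon}\leq 2\,e^{\lambda^2\sigma^2/2}$ uniformly in $\epsilon$ and $f$, hence a bound of $2/\sqrt{1-2\eta}$ for the original quantity. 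Two small remarks on polish rather than substance: the deterministic start $\xi(0)=+1$ gives $u_+(0)=\tfrac{v(0)}{2}(1+\rho(0))\leq v(0)$ directly, so the factor $2$ you flagged is indeed harmless; and since $|\theta_\epsilon|\leq\epsilon^{-1}$ is bounded for fixed $\epsilon$, the approximation step $f_n\to f$ in $L^2$ actually yields $Y_\epsilon^{(n)}\to Y_\epsilon$ uniformly on the probability space, so you may pass to the limit directly without invoking Fatou. Your approach is worth keeping in mind because it makes transparent both the uniformity in $\epsilon$ (essential for the exponential tightness argument in Proposition 3.6) and the sharpness of the constant $\tfrac12$, neither of which is visible from the citation alone.
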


%By using Lemma~\ref{lem-exponential estimation},
%we derive that $G_{m,\epsilon}(t_1,\cdots, t_k)$ is a exponentially good approximation of $Y_{\epsilon}(t_1,\cdots, t_k)$.

\begin{lem}\label{lem-Y-Ym}
For any ~$\delta>0$, we have
$$
\limsup_{m\to\infty}\limsup_{\epsilon\to0}\epsilon^{2}
\log\mathbb P\Big(\epsilon\left|G_{m,\epsilon}(t_1,\cdots, t_k)-G_{\epsilon}(t_1,\cdots, t_k)\right|>\delta\Big)
=-\infty.
$$
\end{lem}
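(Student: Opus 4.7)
The plan is to bound the deviation componentwise using the Gaussian-type exponential integrability supplied by Lemma~\ref{lem-exponential estimation}, which is tailor-made for this situation and strips the $\epsilon$-dependence out of the estimate.

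First I would set $f_{m,i}(r)=K(t_i,r)-K_m(t_i,r)$ for $1\le i\le k$, and put $a_m^2=\max_{1\le i\le k}\|f_{m,i}\|_{L^2([0,1])}^2$. By the very choice of the approximating kernels, $a_m\to 0$ as $m\to\infty$. Since the event $|v|>\delta$ in $\mathbb R^k$ forces $|v_i|>\delta/\sqrt{k}$ for some coordinate, a union bound gives
\begin{equation*}
\mathbb{P}\bigl(\epsilon|G_{m,\epsilon}(t_1,\dots,t_k)-G_{\epsilon}(t_1,\dots,t_k)|>\delta\bigr)
\le \sum_{i=1}^{k}\mathbb{P}\left(\left|\int_0^1 f_{m,i}(r)\theta_{\epsilon}(r)\,dr\right|>\frac{\delta}{\epsilon\sqrt{k}}\right).
\end{equation*}

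Next I would fix any $\eta\in(0,1/2)$ and apply the exponential Chebyshev inequality together with Lemma~\ref{lem-exponential estimation} to each summand, yielding a uniform constant $C<\infty$, independent of $\epsilon$, $m$, and $i$, such that
\begin{equation*}
\mathbb{P}\left(\left|\int_0^1 f_{m,i}(r)\theta_{\epsilon}(r)\,dr\right|>\frac{\delta}{\epsilon\sqrt{k}}\right)
\le C\exp\!\left(-\frac{\eta\delta^2}{k\epsilon^2\|f_{m,i}\|_{L^2}^2}\right)
\le C\exp\!\left(-\frac{\eta\delta^2}{k\epsilon^2 a_m^2}\right).
\end{equation*}
Taking logarithms and multiplying by $\epsilon^2$, the principle of the largest term (for a sum of only $k$ summands) gives
\begin{equation*}
\epsilon^2\log\mathbb{P}\bigl(\epsilon|G_{m,\epsilon}-G_{\epsilon}|>\delta\bigr)
\le \epsilon^2\log(kC)-\frac{\eta\delta^2}{k\,a_m^2}.
\end{equation*}
Letting $\epsilon\to 0$ and then $m\to\infty$ (so that $a_m\to 0$) drives the right-hand side to $-\infty$, which is the claim.

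The only delicate point is bookkeeping: one must invoke Lemma~\ref{lem-exponential estimation} with an $\eta<1/2$ fixed \emph{before} $\epsilon$ tends to $0$, since the lemma provides a moment bound $C$ uniform in both $\epsilon$ and the test function, and it is precisely this uniformity that allows the $\epsilon$ to be pulled out of the exponent. No further regularity of $K$ beyond $K(t_i,\cdot)\in L^2([0,1])$ is required, because the $L^2$-approximation $\|K(t_i,\cdot)-K_m(t_i,\cdot)\|_{L^2}\to 0$ was built into the definition of $K_m$.
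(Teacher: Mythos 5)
Your proposal is correct and follows essentially the same route as the paper: a union/maximum bound over the $k$ coordinates, exponential Chebyshev combined with the uniform (in $\epsilon$ and $f$) bound of Lemma~\ref{lem-exponential estimation}, and then letting $\epsilon\to0$ before $m\to\infty$. The only cosmetic difference is your threshold $\delta/(\epsilon\sqrt{k})$ versus the paper's $\delta/(k\epsilon)$, and the paper's explicit remark excluding the trivial case $\|K(t_i,\cdot)-K_m(t_i,\cdot)\|_{L^2}=0$, which your division by $a_m^2$ tacitly assumes.
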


\begin{proof}
Without loss of generality, we may assume that there exists some $1\leq i\leq k$, $\int_0^1\big|K(t_i,r)-K_{m}(t_i,r)\big|^2dr\neq0$.
Otherwise, we have $\int^{1}_{0}K(t_i,r)\theta_{\epsilon}(r)dr=\int^{1}_{0}K_{m}(t_{i},r)\theta_{\epsilon}(r)dr$ for all $1\leq i\leq k$, which corresponds to the trivial case.

For any ~$0<\eta<\frac{1}{2}$,  it follows that
\begin{equation*}
\begin{aligned}
&\mathbb P\Big(\epsilon\left|G_{m,\epsilon}(t_1,\cdots, t_k)-G_{\epsilon}(t_1,\cdots, t_k)\right|>\delta\Big)\\
&\leq k\max_{1\leq i\leq k}\mathbb P\left(\left|\int^{1}_{0}K(t_i,r)\theta_{\epsilon}(r)dr-\int^{1}_{0}K_{m}(t_{i},r)\theta_{\epsilon}(r)dr\right|
>(k\epsilon)^{-1}\delta\right)\\
&\leq k\max_{i:\int_0^1\big|K(t_i,r)-K_{m}(t_i,r)\big|^2dr\neq0}\mathbb E\exp\left\{\frac{\eta \Big|\int^{1}_{0}(K(t_{i},r)-K_{m}(t_{i},r))\theta_{\epsilon}(r)dr\Big|^{2}}{\int_0^1\big|K(t_i,r)-K_{m}(t_i,r)\big|^2dr}\right\}\\
&\quad\cdot\exp\left\{-\frac{\eta k^{-2}\epsilon^{-2}\delta^{2}}{\max_{1\leq i\leq k}\int_0^1\big|K(t_i,r)-K_{m}(t_i,r)\big|^2dr}\right\}\\
&\leq C\exp\left\{-\frac{\eta k^{-2}\epsilon^{-2}\delta^{2}}{\max_{1\leq i\leq k}\int_0^1\big|K(t_i,r)-K_{m}(t_i,r)\big|^2dr}\right\},
\end{aligned}
\end{equation*}
where the last inequality is obtained by Lemma~\ref{lem-exponential estimation}, and $C>0$ is a uniform constant.
Consequently, we can get
\begin{align*}
&\limsup_{m\to\infty}\limsup_{\epsilon\to0}\epsilon^{2}
\log\mathbb P\Big(\epsilon\left|G_{m,\epsilon}(t_1,\cdots, t_k)-G_{\epsilon}(t_1,\cdots, t_k)\right|>\delta\Big)=-\infty,
\end{align*}
which completes the proof of this lemma.
\end{proof}

By Theorem 4.2.16 in Dembo and Zeitouni~(\cite{DZ-1998}) and Lemma~\ref{lem-Y-Ym},
to get large deviations for~$\epsilon G_{\epsilon}(t_1,\cdots, t_k)$, it suffices to study that of
~$\epsilon G_{m,\epsilon}(t_1,\cdots, t_k)$.

\begin{lem}\label{lem-finite dimen MDP}
The family~$\Big\{\epsilon G_{m, \epsilon}(t_1,\cdots, t_k), ~\epsilon>0\Big\}$ obeys LDP with speed ~$\epsilon^2$ and  rate function
\begin{align*}
&I_{m, t_{1},\cdots, t_{k}}(x_{1}, \cdots, x_{k})\\
&=\inf\left\{J(\varphi); ~\varphi(t)=\int^{t}_{0}\phi(r)dr\in\mathcal{AC},  \int^{1}_{0}K_{m}(t_{i},r)\phi(r)dr=x_{i}, 1\leq i\leq k, t\in[0,1]\right\},
\end{align*}
where $J(\varphi)$ is defined by \eqref{rate function-main result}.
\end{lem}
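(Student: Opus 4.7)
The plan is to apply the contraction principle using the functional LDP for $\epsilon\Theta_\epsilon$ from Proposition~\ref{functional LDP}. The crucial point is that $K_m(t_i, \cdot)$ was chosen to be of finite variation, which allows a continuous representation of $\epsilon G_{m,\epsilon}(t_i)$ in terms of the path $\epsilon\Theta_\epsilon(\cdot)$.

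First I would use integration by parts to rewrite $\epsilon G_{m,\epsilon}(t_i)$ in terms of $\epsilon\Theta_\epsilon$. Since $\Theta_\epsilon$ is continuous with $d\Theta_\epsilon(r) = \theta_\epsilon(r)\,dr$ and $K_m(t_i,\cdot)$ has finite variation, the Stieltjes integration by parts (using $\Theta_\epsilon(0)=0$) gives
\begin{equation*}
\epsilon G_{m,\epsilon}(t_i) = K_m(t_i,1)\,\epsilon\Theta_\epsilon(1) - \int_0^1 \epsilon\Theta_\epsilon(r)\,dK_m(t_i, r), \quad 1\le i \le k.
\end{equation*}
Define the map $F_m\colon C([0,1])\to\mathbb{R}^k$ by
\begin{equation*}
(F_m(\varphi))_i = K_m(t_i,1)\,\varphi(1) - \int_0^1 \varphi(r)\,dK_m(t_i, r), \quad 1\le i\le k.
\end{equation*}
The total variation of $K_m(t_i,\cdot)$ being finite yields $|F_m(\varphi)-F_m(\psi)|\le C_m\|\varphi-\psi\|_{\infty}$, so $F_m$ is continuous in the uniform topology, and $\epsilon G_{m,\epsilon}(t_1,\dots,t_k) = F_m(\epsilon\Theta_\epsilon)$.

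Next, by the contraction principle (Theorem~4.2.1 in \cite{DZ-1998}) combined with Proposition~\ref{functional LDP}, the family $\{F_m(\epsilon\Theta_\epsilon)\}$ satisfies the LDP on $\mathbb{R}^k$ with speed $\epsilon^2$ and rate function
\begin{equation*}
I_{m, t_1,\dots, t_k}(x_1,\dots,x_k) = \inf\bigl\{J(\varphi): \varphi\in C([0,1]),\ F_m(\varphi) = (x_1,\dots,x_k)\bigr\}.
\end{equation*}
Finally I would verify that this infimum coincides with the claimed expression. Any $\varphi$ with $J(\varphi)<\infty$ is automatically absolutely continuous with $\varphi(0)=0$ and admits a representation $\varphi(t)=\int_0^t \phi(r)\,dr$ with $\phi\in L^2([0,1])$; running integration by parts in reverse,
\begin{equation*}
(F_m(\varphi))_i = K_m(t_i, 1)\int_0^1 \phi(r)\,dr - \int_0^1\!\!\!\int_0^r \phi(s)\,ds\,dK_m(t_i, r) = \int_0^1 K_m(t_i, r)\,\phi(r)\,dr,
\end{equation*}
which matches exactly the constraints in the statement.

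The routine parts are the integration by parts and the Lipschitz bound for $F_m$, both of which are standard for finite-variation kernels. The only mild obstacle is being careful that the formula $F_m(\epsilon\Theta_\epsilon) = \epsilon G_{m,\epsilon}(t_1,\dots,t_k)$ holds pathwise (which it does, since $\Theta_\epsilon$ is continuous and piecewise linear), and that the change of variables in the final step respects the absolute continuity constraints baked into $J(\varphi)$; both follow immediately from the definitions in Proposition~\ref{functional LDP}.
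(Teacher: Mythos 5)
Your proposal follows essentially the same route as the paper: rewrite $\epsilon G_{m,\epsilon}(t_i)$ via integration by parts against the finite-variation kernel $K_m(t_i,\cdot)$ so that it becomes a continuous functional of $\epsilon\Theta_\epsilon$, then apply the contraction principle with Proposition~\ref{functional LDP} and identify the rate function by reversing the integration by parts. The paper compresses the last step into ``a direct calculation,'' which you spell out correctly (and your use of the Stieltjes measure $dK_m(t_i,\cdot)$ is if anything slightly more careful than the paper's $\partial K_m/\partial r$ notation), so no changes are needed.
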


\begin{proof}
%Firstly, by It\^o's formula, we have
%\begin{equation*}
%\begin{aligned}
%\theta_{\epsilon}(t)&=\epsilon^{-1}-2\int^t_0\int^1_0 \theta_{\epsilon}(s-)N^{\epsilon^{-2}}(ds,dx)\\
%&=\epsilon^{-1}-2\int^t_0\int^1_0 \theta_{\epsilon}(s-)\widetilde{N}^{\epsilon^{-2}}(ds,dx)
%-2\epsilon^{-2}X_{\epsilon}(t).
%\end{aligned}
%\end{equation*}
%where~$X_{\epsilon}(t)=\frac{1}{\epsilon}\int_0^t(-1)^{N(\epsilon^{-2}r)}dr$
By (\ref{def-Ym-epsilon}), we write
\begin{align*}
G_{m,\epsilon}(t)&=\int_0^1K_m(t,r)d\Theta_{\epsilon}(r)\\
&=\Theta_{\epsilon}(1)K_m(t,1)-\int^{1}_{0} \Theta_{\epsilon}(r)\frac{\partial K_m(t,r)}{\partial r}dr,
\end{align*}
from which we see that $G_{m,\epsilon}(t)$ is a continuous functional of  $\Theta_{\epsilon}$ for fixed~$t\in[0,1]$. By contraction principle and a direct
calculation, we finish the proof.
\end{proof}

Now, we end this subsection by the large deviations of ~$\Big\{\epsilon G_{\epsilon}(t_1,\cdots, t_k), ~\epsilon>0\Big\}$.

\begin{pro}\label{pro-finite dimen MDP}
The family~$\Big\{\epsilon G_{\epsilon}(t_1,\cdots, t_k), ~\epsilon>0\Big\}$ satisfies the weak large deviations
with speed~$\epsilon^{2}$ and rate function
\begin{equation*}
\begin{aligned}
&I_{t_{1}, \cdots, t_{k}}(x_1, x_2,\cdots, x_k)\\
&=\inf\left\{J(\varphi); ~\varphi(t)=\int^{t}_{0}\phi(r)dr\in\mathcal{AC},~\int^{1}_{0}K(t_{i}, r)\phi(r)dr=x_{i}, 1\leq i\leq k,~t\in[0,1]\right\},
\end{aligned}
\end{equation*}
where~$J(\varphi)$ is defined by \eqref{rate function-main result}.
\end{pro}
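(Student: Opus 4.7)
The plan is to lift Lemma~\ref{lem-finite dimen MDP} to the target family via the exponentially good approximation mechanism. Lemma~\ref{lem-Y-Ym} guarantees that $\{\epsilon G_{m,\epsilon}(t_1,\dots,t_k)\}_{m}$ is an exponentially good approximation of $\epsilon G_{\epsilon}(t_1,\dots,t_k)$ at speed $\epsilon^{2}$; combined with the LDP for each $m$ from Lemma~\ref{lem-finite dimen MDP}, Theorem~4.2.16 of \cite{DZ-1998} delivers a weak LDP for $\epsilon G_{\epsilon}(t_1,\dots,t_k)$ at speed $\epsilon^{2}$ with rate function
$$\tilde I(x):=\sup_{\delta>0}\liminf_{m\to\infty}\inf_{y\in\mathbb R^{k}:\,|y-x|<\delta}I_{m,t_1,\dots,t_k}(y),\qquad x\in\mathbb R^{k}.$$
The substance of the proof is then to identify $\tilde I$ with the claimed $I_{t_1,\dots,t_k}$.

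For the upper bound $\tilde I(x)\le I_{t_1,\dots,t_k}(x)$, I would assume $I_{t_1,\dots,t_k}(x)<\infty$, fix $\eta>0$, and select an $\eta$-almost-optimal $\phi\in L^{2}([0,1])$ with $|\phi|\le 1$ a.e.\ and $\int_{0}^{1}K(t_{i},r)\phi(r)dr=x_{i}$. Setting $y_{m,i}:=\int_{0}^{1}K_{m}(t_{i},r)\phi(r)dr$, Cauchy-Schwarz together with $\|K_{m}(t_{i},\cdot)-K(t_{i},\cdot)\|_{L^{2}}\to 0$ gives $y_{m}\to x$. Hence $y_{m}\in B(x,\delta)$ eventually, and by definition of $I_{m,t_1,\dots,t_k}$ one has $I_{m,t_1,\dots,t_k}(y_{m})\le\int_{0}^{1}\Lambda^{*}(\phi(r))dr\le I_{t_1,\dots,t_k}(x)+\eta$. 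Letting $\eta\to 0$ yields the desired inequality.

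The lower bound $\tilde I(x)\ge I_{t_1,\dots,t_k}(x)$ is the main obstacle. I would take $\delta_{n}\downarrow 0$ and, assuming $\tilde I(x)<\infty$, pick $y_{m,n}\in B(x,\delta_{n})$ together with $\phi_{m,n}\in L^{2}([0,1])$ satisfying $\int_{0}^{1}K_{m}(t_{i},r)\phi_{m,n}(r)dr=y_{m,n,i}$ and $\int_{0}^{1}\Lambda^{*}(\phi_{m,n}(r))dr\le I_{m,t_1,\dots,t_k}(y_{m,n})+1/m$. Since $\Lambda^{*}$ is $+\infty$ outside $[-1,1]$, one automatically has $|\phi_{m,n}|\le 1$ a.e.\ and thus $\|\phi_{m,n}\|_{L^{2}}\le 1$. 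A diagonal extraction produces a subsequence $\phi_{m(n)}$ converging weakly in $L^{2}([0,1])$ to some $\phi$ with $|\phi|\le 1$ a.e.; the constraint passes to the limit through
$$\left|\int_{0}^{1}K(t_{i},r)\phi(r)dr-x_{i}\right|\le\left|\int_{0}^{1}K(t_{i},r)(\phi-\phi_{m(n)})(r)dr\right|+\|K(t_{i},\cdot)-K_{m(n)}(t_{i},\cdot)\|_{L^{2}}+\delta_{n}\to 0,$$
using weak convergence for the first term, the $L^{2}$-approximation of $K$ for the second, and $\delta_n\to0$ for the third. Convexity and continuity of $\Lambda^{*}$ on $[-1,1]$ then make $\phi\mapsto\int_{0}^{1}\Lambda^{*}(\phi(r))dr$ lower semicontinuous on the weak-$L^{2}$ unit ball, giving $I_{t_1,\dots,t_k}(x)\le\int_{0}^{1}\Lambda^{*}(\phi)dr\le\liminf_{n}\int_{0}^{1}\Lambda^{*}(\phi_{m(n)})dr\le\tilde I(x)$.

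The hardest ingredient is this lower bound, where one must simultaneously exploit the $L^{\infty}$-bound forced on competitors by finiteness of the rate function, the convex lower semicontinuity of $\int\Lambda^{*}(\cdot)\,dr$, and the uniform $L^{2}$-closeness of $K_{m}$ to $K$, so as to legitimately pass to a weak $L^{2}$-limit and transport the linear constraints from $K_{m}$ onto the target kernel $K$.
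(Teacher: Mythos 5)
Your proposal is correct and follows essentially the same route as the paper: both pass through Lemma~\ref{lem-Y-Ym} and Lemma~\ref{lem-finite dimen MDP} via the exponentially good approximation theorem (Theorem~4.2.16 in \cite{DZ-1998}) to get a weak LDP with rate $\sup_{\delta>0}\liminf_{m\to\infty}\inf_{|y-x|\le\delta}I_{m,t_1,\cdots,t_k}(y)$, and then identify this with $I_{t_1,\cdots,t_k}$. The only difference is that you spell out in full the identification step (the uniform bound $|\phi|\le 1$ forced by finiteness of $\Lambda^{*}$, weak $L^{2}$ compactness, transport of the linear constraints from $K_m$ to $K$, and weak lower semicontinuity of the convex functional $\int\Lambda^{*}(\cdot)\,dr$), which the paper compresses into the phrase ``the classical compact argument.''
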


\begin{proof}
By Theorem~4.2.16 in Dembo and Zeitouni~(\cite{DZ-1998}) and Lemma~\ref{lem-finite dimen MDP},
The family
$$
\Big\{\epsilon G_{\epsilon}(t_{1},\cdots, t_{k}), \epsilon>0\Big\}
$$
satisfies the weak large deviations
with speed~$\epsilon^{2}$ and rate function
\begin{equation}\label{finite dimensional rate function}
\widetilde{I}_{t_{1},\cdots, t_{k}}(x_{1},\cdots, x_{k})
=\sup_{\delta>0}\liminf_{m\to\infty}\inf_{|y-x| \le \delta}I_{m, t_{1}, \cdots, t_{k}}(y_{1},\cdots, y_{k}), ~~x=(x_{1}, \cdots,x_{k}).
\end{equation}
By the definition of $\Lambda^{*}$, it is noted that $|\varphi'(t)|\leq1$, a.e. $t\in[0,1]$ in the finite domain of $J(\cdot)$ and
$$
1-\sqrt{1-x^{2}}=\frac{x^{2}}{1+\sqrt{1-x^{2}}}, ~|x|\leq1,
$$
the classical compact argument yields
$$
I_{t_{1},\cdots, t_{k}}(x)=\widetilde{I}_{t_{1},\cdots, t_{k}}(x), ~~x\in\mathbb R^{k}.
$$
\end{proof}

\subsubsection{Exponential tightness and proof of Theorem~\ref{thm-main-result}}

For the exponential tightness of $\{\epsilon G_{\epsilon},\epsilon>0\}$,
we utilize Besov-L\'evy modulus embedding method illustrated by Theorem A.19 in Friz and Victoir ~(\cite{FV-2010}).

\begin{pro}\label{pro-expansion tight}
The family~$\{\epsilon G_{\epsilon},\epsilon>0\}$ is exponentially tight in $C([0,1])$.
\end{pro}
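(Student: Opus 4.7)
My plan is to produce a H\"older-type compact set in $C([0,1])$ whose complement carries mass that is exponentially small at speed $\epsilon^{2}$, by combining the exponential-moment bound of Lemma~\ref{lem-exponential estimation} with the Gaussian-type Besov--L\'evy modulus embedding \cite[Theorem A.19]{FV-2010}. The overall structure is: sub-Gaussian increments $\Rightarrow$ sub-Gaussian H\"older seminorm $\Rightarrow$ exponential tail on the norm $\Rightarrow$ compact set.

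First I would fix $0<\eta<1/2$ and apply Lemma~\ref{lem-exponential estimation} to the function $f_{s,t}(r):=K(t,r)-K(s,r)\in L^{2}([0,1])$. Using hypothesis \textbf{(H2)} to bound the normalising $L^{2}$ norm and inserting the scaling factor $\epsilon$, this yields a uniform bound
$$
\mathbb{E}\exp\!\left\{\frac{\eta\,|\epsilon G_{\epsilon}(t)-\epsilon G_{\epsilon}(s)|^{2}}{\epsilon^{2}\,(\mathcal{G}(t)-\mathcal{G}(s))^{\alpha}}\right\}\leq C
$$
for all $0\le s<t\le 1$ and all small $\epsilon$, with $C$ independent of $\epsilon,s,t$. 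In other words, the increments of $\epsilon G_{\epsilon}$ are sub-Gaussian at scale $\epsilon$ with control function $\omega(s,t)=(\mathcal{G}(t)-\mathcal{G}(s))^{\alpha}$.

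Second I would feed this into \cite[Theorem A.19]{FV-2010} to upgrade the increment estimate to a H\"older-norm bound. For a suitable exponent $\beta\in(0,\alpha/2)$ and constants $c_{0},C_{0}>0$ independent of $\epsilon$, this gives
$$
\mathbb{E}\exp\!\left\{\frac{c_{0}}{\epsilon^{2}}\sup_{s\ne t}\frac{|\epsilon G_{\epsilon}(t)-\epsilon G_{\epsilon}(s)|^{2}}{|\mathcal{G}(t)-\mathcal{G}(s)|^{2\beta}}\right\}\leq C_{0}.
$$
By Chebyshev, for any $L>0$ and $M_{L}:=\sqrt{L/c_{0}}$,
$$
\mathbb{P}\!\left(\sup_{s\ne t}\frac{|\epsilon G_{\epsilon}(t)-\epsilon G_{\epsilon}(s)|}{|\mathcal{G}(t)-\mathcal{G}(s)|^{\beta}}>M_{L}\right)\leq C_{0}\exp\{-L/\epsilon^{2}\}.
$$

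Third, setting
$$
K_{L}:=\Bigl\{f\in C([0,1]):\ f(0)=0,\ \sup_{s\ne t}\frac{|f(t)-f(s)|}{|\mathcal{G}(t)-\mathcal{G}(s)|^{\beta}}\leq M_{L}\Bigr\},
$$
one has, since $\mathcal{G}$ is continuous and strictly increasing on $[0,1]$, that $K_{L}$ is uniformly bounded and equicontinuous, and hence compact in $C([0,1])$ by Arzel\`a--Ascoli. Moreover $\epsilon G_{\epsilon}(0)=0$ by \textbf{(H1)}, so the previous tail bound gives $\limsup_{\epsilon\to 0}\epsilon^{2}\log\mathbb{P}(\epsilon G_{\epsilon}\notin K_{L})\le -L$, as required.

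The main obstacle is the Besov--L\'evy upgrade: Theorem~A.19 in \cite{FV-2010} is normally stated with the standard control $|t-s|^{\alpha}$, so one must verify it applies with the generalized control $(\mathcal{G}(t)-\mathcal{G}(s))^{\alpha}$ and extract an admissible H\"older exponent $\beta$. I would handle this by the time change $u=\mathcal{G}(t)$: setting $\widetilde{G}_{\epsilon}(u):=(\epsilon G_{\epsilon})(\mathcal{G}^{-1}(u))$ on $[\mathcal{G}(0),\mathcal{G}(1)]$, the increment bound becomes the standard $\omega(u,v)=|u-v|^{\alpha}$ form, A.19 then applies verbatim (choosing the Besov indices $\theta,p$ so that $\theta-1/p\in(0,\alpha/2)$), and the resulting H\"older estimate is transported back to the variable $t$ through the homeomorphism $\mathcal{G}$ to give the exponent $\beta$ used above.
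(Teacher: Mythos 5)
Your proposal is correct and takes essentially the same route as the paper: the paper likewise combines Lemma~\ref{lem-exponential estimation} with \textbf{(H2)}, passes to the time-changed process $\mathbb{G}_{\epsilon}=G_{\epsilon}\circ\mathcal{G}^{-1}$, applies Theorem~A.19 of Friz--Victoir to upgrade the sub-Gaussian increments to exponential integrability of a H\"older seminorm, and transports the resulting compact sets back through the homeomorphism $\mathcal{G}$. The only cosmetic difference is that you conclude via an explicit H\"older-ball compact set (with $f(0)=0$ from \textbf{(H1)}) and Chebyshev, whereas the paper first derives the exponentially negligible modulus of continuity and then states exponential tightness.
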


\begin{proof}
Since ~$\mathcal{G}$ is strictly increasing and continuous, then its inverse function $\mathcal{G}^{-1}$ is also strictly increasing and continuous.
Define
$
\mathbb{G}_{\epsilon}(t)=G_{\epsilon}\big(\mathcal{G}^{-1}(t)\big),~ t\in[0,\mathcal{G}(1)].
$
Note that
$$
G_{\epsilon}\big(\mathcal{G}^{-1}(t)\big)-G_{\epsilon}\big(\mathcal{G}^{-1}(s)\big)
=\int^{1}_{0}(K(\mathcal{G}^{-1}(t), r)-K(\mathcal{G}^{-1}(s), r))\theta_{\epsilon}(r)dr.
$$
Without  loss of generality, we may assume that $\int^{1}_{0}(K(\mathcal{G}^{-1}(t), r)-K(\mathcal{G}^{-1}(s), r))^{2}dr\neq0$,
otherwise we have $G_{\epsilon}\big(\mathcal{G}^{-1}(t)\big)-G_{\epsilon}\big(\mathcal{G}^{-1}(s)\big)=0$,  which corresponds to the trivial case.

Moreover, for $t\neq s$, $d(t,s):=|t-s|^{\alpha/2}$, where $\alpha$ is given in {\bf (H2)}. Then we have
\begin{align*}
&\frac{|\mathbb{G}_{\epsilon}(t)-\mathbb{G}_{\epsilon}(s)|^{2}}{d^2(t,s)}\\
&=\frac{|\mathbb{G}_{\epsilon}(t)-\mathbb{G}_{\epsilon}(s)|^{2}}{\int^{1}_{0}(K(\mathcal{G}^{-1}(t), r)-K(\mathcal{G}^{-1}(s), r))^{2}dr}
\cdot\frac{\int^{1}_{0}(K(\mathcal{G}^{-1}(t), r)-K(\mathcal{G}^{-1}(s), r))^{2}dr}{d^2(t,s)}.
\end{align*}
Therefore, Lemma~\ref{lem-exponential estimation} and assumption~(H2) imply that
$$
\sup_{t,s\in[0,1]}\mathbb E\exp\left\{\frac{\eta\|\mathbb{G}_{\epsilon}(t)-\mathbb{G}_{\epsilon}(s)|^{2}}{|t-s|^{\alpha}}\right\}
<\infty,
$$
where $0<\eta<\frac{1}{2}$.
Using Theorem A.19 in Friz and Victoir~(\cite{FV-2010}), there exists $c=c(\alpha, \alpha')$ such that
$$
\mathbb E\exp\left\{c\eta\sup_{t,s\in[0,1]}\frac{|\mathbb{G}_{\epsilon}(t)-\mathbb{G}_{\epsilon}(s)|^{2}}{|t-s|^{\alpha'}}\right\}
<\infty,
$$
where~$0<\alpha'<\alpha$. Hence, for any~$a>0$, it follows that
\begin{align*}
&\limsup_{\delta\to0}\limsup_{\epsilon\to0}\epsilon^{2}\log\mathbb P\left(\sup_{|t-s|<\delta}|\epsilon\mathbb{G}_{\epsilon}(t)-\epsilon\mathbb{G}_{\epsilon}(s)|>a\right)\\
&\leq \limsup_{\delta\to0}\limsup_{\epsilon\to0}\epsilon^{2}
\left(-\frac{c\eta a^2}{\epsilon^2\delta^{\alpha'}}
+\log\mathbb E\exp\left\{c\eta\sup_{t,s\in[0,1]}\frac{|\mathbb{G}_{\epsilon}(t)-\mathbb{G}_{\epsilon}(s)|^{2}}{|t-s|^{\alpha'}}\right\}\right)\\
&=-\infty,
\end{align*}
which implies the exponential tightness of ~$\{\epsilon\mathbb{G}_{\epsilon},\epsilon>0\}$, i.e.,
for any $L>0$, there exists a compact set $\mathcal K_{L}\subset C([0,\mathcal{G}(1)])$ such that
$$
\limsup_{\epsilon\to0}\epsilon^{2}\log\mathbb P\Big(\epsilon\mathbb{G}_{\epsilon}\in\mathcal K^{c}_{L}\Big)
\leq-L.
$$

Since $\mathcal{G}$ is strictly increasing and continuous,
$
\tilde{\mathcal K}_{L}:=\Big\{f: f=g\circ \mathcal{G}~\text{for ~some}~ g\in\mathcal K_{L}\Big\}
$
is also a compact set in $C([0,1])$ and
$f\in\tilde{\mathcal K}_{L}$ if and only if $f\circ \mathcal{G}^{-1}\in\mathcal K_{L}$.
Therefore,
$$
\limsup_{\epsilon\to0}\epsilon^{2}\log\mathbb P\Big(\epsilon G_{\epsilon}\in\tilde{\mathcal K}^{c}_{L}\Big)
\leq
\limsup_{\epsilon\to0}\epsilon^{2}\log\mathbb P\Big(\epsilon\mathbb{G}_{\epsilon}\in\mathcal K^{c}_{L}\Big)
\leq-L,
$$
which implies the desired result.
\end{proof}

\subsubsection{Proof of Theorem~\ref{thm-main-result}}
\begin{proof}[\noindent\textbf{\emph{Proof of Theorem~\ref{thm-main-result}}}]
By Proposition~\ref{pro-finite dimen MDP} and Proposition~\ref{pro-expansion tight},
the family~$\{\epsilon G_{\epsilon},\epsilon>0\}$ satisfies the functional
large deviation principle in $C([0,1])$, with speed ~${\epsilon}^{2}$ and rate function ~$\widetilde{I}(\cdot)$ defined by
$$
\widetilde{I}(f)=\sup_{k\in\mathbb{N}}\sup_{0\leq t_1\leq\cdots\leq t_k}I_{t_1,\cdots, t_k}\big(f(t_1),\cdots,f(t_k)\big), ~f\in C([0,1]),
$$
where~$I_{t_1,\cdots, t_k}(\cdot)$ is defined in Proposition~\ref{pro-finite dimen MDP}.

To end the proof of this theorem, we only need to show ~$\widetilde{I}(f)=I(f)$ for~$f\in C([0,1])$,
where~$I(\cdot)$ is defined by~(\ref{thm-main-result}).
In fact, by the definition of $\Lambda^{*}$, it is noted that $|\varphi'(t)|\leq1$, a.e. $t\in[0,1]$ in the finite domain of $J(\cdot)$.
Therefore, it can easily be seen that the classical compact argument yields $\widetilde{I}(f)=I(f)$. This ends the proof.
\end{proof}

\section{Proof of Theorem~\ref{thm-multiplicative noise}} \label{s:4}
%In this section, we will show Theorem~\ref{thm-multiplicative noise} explicitly. The proof is based on weak large deviations and exponential tightness.

Recall
\begin{align}  \label{e:SDE-LDP}
X^{\epsilon}(t)
&=x_{0}+\int^{t}_{0}b(X^{\epsilon}(s))ds+\lambda(\epsilon)\int^{t}_{0}\sigma(X^{\epsilon}(s))\theta_{\epsilon}(s)ds,
\end{align}
where~$\theta_{\epsilon}(t)=\frac{1}{\epsilon}(-1)^{N^{\epsilon^{-2}}(t)}$, $N^{\epsilon^{-2}}(t)=\int^{t}_{0} N^{\epsilon^{-2}}(ds)$, $N^{\alpha}$ is a Poisson random measure with the intensity measure $\alpha dt$ on ~$(\mathbb R_{+}, \mathcal B(\mathbb R_{+}))$ and $\tilde{N}^{\alpha}(ds)={N}^{\alpha}(ds)-\alpha ds$ is the compensated Poisson random measure.

By It\^o's formula (\cite{Applebaum}), we have
\begin{equation*}
\begin{aligned}
\theta_{\epsilon}(t)%&=\epsilon^{-1}-2\int^t_0 \theta_{\epsilon}(s-)N^{\epsilon^{-2}}(ds)\\
&=\epsilon^{-1}-2\int^t_0 \theta_{\epsilon}(s-)\widetilde{N}^{\epsilon^{-2}}(ds)
-2\epsilon^{-2}\Theta_{\epsilon}(t),
\end{aligned}
\end{equation*}
i.e.,
\begin{equation}\label{Theta-theta-relation}
\Theta_{\epsilon}(t)
=\frac{\epsilon}{2}-\epsilon^{2}\int^t_0 \theta_{\epsilon}(s-)\widetilde{N}^{\epsilon^{-2}}(ds)
-\frac{\epsilon^{2}}{2}\theta_{\epsilon}(t).
\end{equation}
Therefore, \eqref{e:SDE-LDP} can be rewritten as
\begin{equation}\label{represe-sde}
\begin{aligned}
X^{\epsilon}(t)
&=x_{0}+\int^{t}_{0}b(X^{\epsilon}(s))ds
-\lambda(\epsilon)\epsilon^{2}\int^{t}_{0}\sigma(X^{\epsilon}(s-))\theta_{\epsilon}(s-)\tilde{N}^{\epsilon^{-2}}(ds)\\
&\ \ \ \ -\frac{\lambda(\epsilon)\epsilon^{2}}{2}\int_0^t\sigma(X^{\epsilon}(s-))d\theta_{\epsilon}(s).
\end{aligned}
\end{equation}
As we shall see below, the second and the third terms on the r.h.s. of \eqref{represe-sde} are drift and martingale respectively, which can be handled by standard method, while the last one need to be carefully analyzed.

\subsection{Exponential tightness of ~$\{X^{\epsilon}(t), t\in[0,1]\}$} It is clear that the exponential tightness immediately follows from the following two lemmas.

\begin{lem}\label{est to uniform bound}
Under the condition {\bf (H3)}, we have
\begin{equation}\label{eq-est to uniform bound}
\lim_{K\to\infty}\limsup_{\epsilon\to0}\lambda^2(\epsilon)\log\mathbb P\Big(\sup_{t\in[0,1]}|X^{\epsilon}(t)|>K\Big)=-\infty.
\end{equation}
\end{lem}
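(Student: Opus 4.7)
The plan is to reduce, via Gronwall's inequality and a stopping-time localization, the bound on $\sup_t|X^{\epsilon}(t)|$ to one on the stochastic term
\[
M^{\epsilon}(t):=\lambda(\epsilon)\int_0^t\sigma(X^{\epsilon}(s))\theta_{\epsilon}(s)\,ds,
\]
and then to control $M^{\epsilon}$ using the representation \eqref{represe-sde} together with exponential martingale estimates. First I would introduce $\tau_K:=\inf\{t\in[0,1]:|X^{\epsilon}(t)|\ge K\}\wedge 1$; absolute continuity of $X^{\epsilon}$ gives $\{\sup_t|X^{\epsilon}|>K\}=\{\tau_K<1\}$. Applying Gronwall to \eqref{e:SDE-LDP} on $[0,\tau_K\wedge 1]$ using the Lipschitz bound $|b(x)|\le|b(0)|+L|x|$, I obtain
$\sup_{s\le\tau_K\wedge 1}|X^{\epsilon}(s)|\le e^L\bigl(|x_0|+|b(0)|+\sup_{s\le\tau_K\wedge 1}|M^{\epsilon}(s)|\bigr)$, so that for $K$ large, $\{\tau_K<1\}\subseteq\{\sup_{s\le\tau_K\wedge 1}|M^{\epsilon}(s)|\ge K/(2e^L)\}$.

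Next I would decompose $M^{\epsilon}=M_1+M_2$ via \eqref{represe-sde}, where $M_1(t)=-\lambda(\epsilon)\epsilon^2\int_0^t\sigma(X^{\epsilon}(s-))\theta_{\epsilon}(s-)\widetilde N^{\epsilon^{-2}}(ds)$ is the compensated Poisson martingale and $M_2(t)=-\tfrac{\lambda(\epsilon)\epsilon^2}{2}\int_0^t\sigma(X^{\epsilon}(s-))d\theta_{\epsilon}(s)$ is the boundary remainder. For $M_2$, a further integration by parts transfers the variation from $\theta_{\epsilon}$ onto $\sigma(X^{\epsilon})$; combined with $|\theta_{\epsilon}|\equiv 1/\epsilon$ and the SDE-derived pathwise bound $\int_0^t|dX^{\epsilon}|=O\bigl((1+\lambda(\epsilon)/\epsilon)K\bigr)$ on $[0,\tau_K]$, one arrives at the deterministic estimate $|M_2(t\wedge\tau_K)|\le C(K)\bigl(\lambda(\epsilon)\epsilon+\lambda^{2}(\epsilon)\bigr)$, which vanishes as $\epsilon\to 0$. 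Hence for $\epsilon$ small enough (depending on $K$), the event $\{\tau_K<1\}$ forces $\sup_{s\le\tau_K\wedge 1}|M_1(s)|\ge K/(4e^L)$, and the task reduces to a super-exponential tail bound for the stopped martingale $M_1^{\tau_K}$.

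The hard part is precisely this concentration estimate. On $[0,\tau_K]$ the predictable quadratic variation of $M_1$ is bounded by $\lambda^{2}(\epsilon)(|\sigma(0)|+LK)^{2}$ and its jumps by $\lambda(\epsilon)\epsilon(|\sigma(0)|+LK)$; a naive Bernstein/Bennett bound would give only
\[
\lambda^{2}(\epsilon)\log\mathbb{P}\bigl(\sup|M_1^{\tau_K}|\ge K/(4e^L)\bigr)\le -\frac{cK^{2}}{(|\sigma(0)|+LK)^{2}},
\]
whose $K\to\infty$ limit is the finite negative constant $-c/L^{2}$, not $-\infty$. The obstruction is the linear growth of $\sigma$: the factor $K$ sitting inside the quadratic variation exactly offsets the $K^{2}$ in the exponent. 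To close this gap I would invoke the refined concentration inequalities to be developed in Section~\ref{s:a}, which build on the Gaussian integrability of $\int f(r)\theta_{\epsilon}(r)dr$ (Lemma~\ref{lem-exponential estimation}) and the Besov-L\'evy / Garsia-Rodemich-Rumsey embedding of Friz-Victoir \cite[Theorem~A.19]{FV-2010}; applied to $M^{\epsilon}$ with the Lipschitz input $\sigma(X^{\epsilon})$ rather than a crude sup-bound on $|\sigma|$, they yield a genuinely Gaussian tail on $\sup_t|M^{\epsilon}(t)|$ at the scale $\lambda(\epsilon)$ whose effective variance grows sub-quadratically in $K$. Combining the three ingredients produces \eqref{eq-est to uniform bound}.
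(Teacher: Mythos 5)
You correctly set up the reduction (stopping time $\tau_K$, Gronwall, decomposition of $M^{\epsilon}$ via \eqref{represe-sde} into a compensated-Poisson martingale $M_1$ and the remainder $M_2$), and you correctly diagnose the central obstruction: because $\sigma$ has only linear growth, a Bernstein/Bennett bound on the stopped martingale $M_1^{\tau_K}$ produces $-cK^2/(|\sigma(0)|+LK)^2$, whose limit as $K\to\infty$ is a finite constant rather than $-\infty$. This is precisely why the problem is non-trivial, and your identification of it is accurate.

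Where the argument breaks is the proposed cure. You appeal to Lemma~\ref{lem-exponential estimation} and the Friz--Victoir embedding to get a ``Gaussian tail on $\sup_t|M^{\epsilon}(t)|$ whose effective variance grows sub-quadratically in $K$''. Those two tools do not deliver that. Lemma~\ref{lem-exponential estimation} is stated for \emph{deterministic} $f\in L^2([0,1])$ integrated against $\theta_{\epsilon}$; it is used in the paper for the Gaussian process $G_{\epsilon}$ and for the term $\mathcal{R}_{4,\epsilon}$, where the integrand $\sigma(\varphi)-\sigma(\varphi_m)$ is deterministic and uniformly small. It does not apply to $\sigma(X^{\epsilon}(s))\theta_{\epsilon}(s)$, whose amplitude is a random process correlated with $\theta_{\epsilon}$ itself. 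Likewise, the deviation inequality Proposition~\ref{exponential inequality} of Section~\ref{s:a} requires a uniform bound $\sup_s|\gamma(s)|\le\rho$ and yields a tail of size $\exp\{-\lambda^{-2}(\epsilon)(\eta a - \tfrac12\eta^2\rho^2(\iota_2-\iota_1)e^{\ldots})\}$; taking $\rho=LK+|\sigma(0)|$ and $a\asymp K$ reproduces exactly the finite-constant barrier you already found. There is no sub-quadratic-variance mechanism available from these ingredients.

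The device the paper actually uses is different in kind: it is a \emph{Lyapunov/Markov-inequality} argument, not a martingale concentration bound. After turning \eqref{e:Tight-Bound} into the two-sided comparison $\check Y^{\epsilon}\le X^{\epsilon}\le\hat Y^{\epsilon}$, the paper applies Proposition~\ref{comparison est lem} (underpinned by Lemma~\ref{Est lemma}), which evaluates It\^o's formula on the scale function $\psi(u)=(\rho^2+|u|^2)^{1/\lambda^2(\epsilon)}$, uses Gronwall on $\mathbb{E}\psi(Y^{\epsilon}(\tau\wedge\tau_1))$, and then Markov's inequality. The linear growth of $b$ and $\sigma$ is absorbed into multiplicative constants (e.g.\ $2B$, $2M^2\epsilon^2(\cdot)e^{\cdot}$) that remain bounded as $\epsilon\to0$; the divergence in $K$ then comes cleanly from the term $\log\bigl((\rho^2+|y_0^{\epsilon}|^2)/(\rho^2+K^2)\bigr)\to-\infty$. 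In short, the $-\infty$ in \eqref{eq-est to uniform bound} is \emph{not} obtained by beating the Bernstein barrier with a sharper concentration inequality; it is obtained by replacing concentration with a polynomial Lyapunov function whose exponent scales as $1/\lambda^2(\epsilon)$. Your sketch does not contain this idea, and without it the last step of your proposal does not close.
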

\begin{proof}
%Recall that
%{\color{red}
%\begin{align*}
%X^{\epsilon}(t)&=x_{0}+\int^{t}_{0}b(X^{\epsilon}(s-))ds+\frac{\lambda(\epsilon)}{\epsilon}\int^{t}_{0}\sigma(X^{\epsilon}(s-))%(-1)^{N^{\epsilon^{-2}(s)}}ds,
%\end{align*}
%}
Denote $\tau_1,\tau_2,...$ the jump times of $\{N^{\epsilon^{-2}}(s)\}_{0 \le s \le t}$ for $t \in [0,1]$ and $\tau_{0}=0$, we have
\begin{equation*}%\label{represe-sde}
\begin{split}
\left|\int_0^t\sigma(X^{\epsilon}(s-))d\theta_{\epsilon}(s)\right|&=\left|\sum_{j \ge 1} \sigma(X^{\epsilon}(\tau_j-)) \left[\frac{1}{\e}(-1)^{N^{\epsilon^{-2}}(\tau_j-)+1}-\frac{1}{\e} (-1)^{N^{\epsilon^{-2}}(\tau_j-)}\right] \right|  \\
&=\frac{2}{\e} \left|\sum_{j \ge 1} \sigma(X^{\epsilon}(\tau_j-)) (-1)^{N^{\epsilon^{-2}}(\tau_j-)}\right|  \\
&\le \frac{2}{\e} \sum_{j \ge 1} \left|\sigma(X^{\epsilon}(\tau_j-))-\sigma(X^{\epsilon}(\tau_{j-1}-))\right|+\frac{2}{\epsilon}|\sigma(x_{0})| \\
& \le \frac{2L}{\e}  \sum_{j \ge 1}  \left|X^{\epsilon}(\tau_j-)-X^{\epsilon}(\tau_{j-1}-)\right|+\frac{2}{\epsilon}|\sigma(x_{0})|,
\end{split}
\end{equation*}
where the first inequality is because $(-1)^{N^{\epsilon^{-2}}(\tau_j-)}$ and $(-1)^{N^{\epsilon^{-2}}(\tau_{j-1}-)}$ has different signs for all
$j$, and the second one is by Lipschitz condition of $\sigma$. Since $\tau_j \in [0,t]$ and $\tau_{j-1}\le \tau_j $ for all $j$, we know by (\ref{e:SDE-LDP})
\begin{equation*}
\begin{split}
\sum_{j \ge 1}  \left|X^{\epsilon}(\tau_j-)-X^{\epsilon}(\tau_{j-1}-)\right| & \le \int_0^t |b(X^\e(s))| ds+\lambda(\epsilon) \int_0^t |\sigma(X^\e(s))| |\theta_\e(s)|ds \\
& \le \int_0^t |b(X^\e(s))| ds+\frac{\lambda(\epsilon)}{\e} \int_0^t |\sigma(X^\e(s))| ds.
\end{split}
\end{equation*}
Hence,
 \begin{equation}  \label{e:Tight-Bound}
\begin{aligned}
&\left|\int_0^t\sigma(X^{\epsilon}(s-))d\theta_{\epsilon}(s)\right|\\
& \le \frac{2L}{\e} \left(\int_0^t |b(X^\e(s))| ds+\frac{\lambda(\epsilon)}{\e} \int_0^t |\sigma(X^\e(s))| ds+\frac{|\sigma(x_{0})|}{L}\right).
\end{aligned}
\end{equation}
Combining \eqref{represe-sde} and \eqref{e:Tight-Bound}, we have
$$
\check{Y}^{\epsilon}(t)\leq X^{\epsilon}(t)\leq\hat{Y}^{\epsilon}(t),  \ \ \ \ \ \forall \  \ t \in [0,1],
$$
with
\begin{equation*}
\begin{aligned}
\hat{Y}^{\epsilon}(t)
&=x_{0}+\vartheta\int^{t}_{0}(1+|X^{\epsilon}(s)|)ds
-\lambda(\epsilon)\epsilon\int^{t}_{0} \sigma(X^{\epsilon}(s-))\epsilon\theta_{\epsilon}(s-)\tilde{N}^{\epsilon^{-2}}(ds)\\
\end{aligned}
\end{equation*}
\begin{equation*}
\begin{aligned}
\check{Y}^{\epsilon}(t)
&=x_{0}-\vartheta\int^{t}_{0}(1+|X^{\epsilon}(s)|)ds
-\lambda(\epsilon)\epsilon\int^{t}_{0}\sigma(X^{\epsilon}(s-))\epsilon\theta_{\epsilon}(s-)\tilde{N}^{\epsilon^{-2}}(ds).
\end{aligned}
\end{equation*}
where~$\vartheta$ is a sufficiently large and positive constant depending only on $L$, $b(0)$, $\sigma(0)$ and $\sigma(x_{0})$.

Applying Proposition~\ref{comparison est lem} below with $\rho=1$ and $B=M=\sqrt{2}\vartheta$, we get
\begin{align*}
&\lambda^2(\epsilon)\log\mathbb P\Big(\sup_{t\in[0,1]}|X^{\epsilon}(t)|>K\Big)\\
&\leq
4\vartheta+8\vartheta^2\epsilon^{2}(2+3\lambda^{2}(\epsilon))e^{4\sqrt{2}\vartheta\epsilon/{\lambda(\epsilon)}}
+\log\Big(\frac{1+2|x_{0}|^{2}}{1+K^{2}}\Big),
\end{align*}
which implies
\begin{equation*}
\begin{aligned}
&\lim_{K\to\infty}\limsup_{\epsilon\to0}\lambda^2(\epsilon)\log\mathbb P\Big(\sup_{t\in[0,1]}|X^{\epsilon}(t)|>K\Big)=-\infty.
\end{aligned}
\end{equation*}
\end{proof}
%Finally, we establish the exponential tightness of ~$\{X^{\epsilon}(t), t\in[0,1]\}$ on $C([0,1],\mathbb R)$.
\begin{lem}\label{pro-exponential tightness-sde}
Under the condition {\bf(H3)} and any $\delta_0 \in (0,1/4)$, we have
\begin{equation}\label{eq-pro-exponential tightness-sde}
\lim_{\delta\to0}\limsup_{\epsilon\to0}\lambda^2(\epsilon)
\log\mathbb P\Big(\sup_{0\leq t-s\leq\delta, t,s\in[0,1]}\big|X^{\epsilon}(t)-X^{\epsilon}(s)\big|>\delta_0\Big)=-\infty.
\end{equation}
\end{lem}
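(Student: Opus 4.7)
The plan is to combine the uniform-bound Lemma~\ref{est to uniform bound} with a time discretization and to handle the two regimes of $\lambda(\e)$ separately. Introduce the stopping time $\tau_K := \inf\{t \in [0,1] : |X^\e(t)| > K\}$ and the event $A_K := \{\tau_K > 1\}$. By Lemma~\ref{est to uniform bound}, $\limsup_{\e\to 0}\lambda^2(\e)\log\mathbb P(A_K^c) \to -\infty$ as $K\to\infty$, so after the splitting $\mathbb P(\cdot) \le \mathbb P(A_K^c) + \mathbb P(\cdot \cap A_K)$ I only need to control the second summand for each fixed $K$. On $A_K$ one has $|b(X^\e)|, |\sigma(X^\e)| \le C_K := C(1+K)$. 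Partition $[0,1]$ by $t_i = i\delta$, $i = 0,1,\dots,\lceil 1/\delta\rceil$. Since $\sup_{|t-s|\le \delta}|X^\e(t) - X^\e(s)| \le 2\max_i\omega_i^\e$ with $\omega_i^\e := \sup_{u\in[t_i, t_{i+1}\wedge 1]}|X^\e(u) - X^\e(t_i)|$, a union bound reduces the task to proving $\max_i \lambda^2(\e)\log\mathbb P(\omega_i^\e > \delta_0/2, A_K) \to -\infty$ as $\delta \to 0$; the overhead $\lambda^2(\e)\log(1/\delta)$ vanishes as $\e\to 0$ for fixed $\delta$.

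In regime (1), where $\lambda(\e)/\e \to \kappa$, I would use only the crude pointwise estimate $|\theta_\e|=1/\e$: on $A_K$,
\begin{equation*}
|X^\e(u) - X^\e(t_i)| \le C_K(u-t_i)\bigl(1 + \lambda(\e)/\e\bigr) \le (2+\kappa)C_K\delta
\end{equation*}
for all sufficiently small $\e$. Choosing $\delta < \delta_0/(2(2+\kappa)C_K)$ forces $\omega_i^\e < \delta_0/2$ pathwise on $A_K$, so $\mathbb P(\omega_i^\e > \delta_0/2, A_K) = 0$, and regime (1) is essentially deterministic once the path is uniformly bounded.

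In regime (2), where $\lambda(\e) \to 0$ but $\lambda(\e)/\e \to \infty$, the crude bound explodes, and I would instead use the decomposition \eqref{represe-sde}, writing $X^\e(u) - X^\e(t_i) = I_1 + I_2 + I_3$ on $[t_i, u]$. On $A_K$, $|I_1| \le C_K\delta$; the alternating-sign / pairing argument from the proof of Lemma~\ref{est to uniform bound}, applied on $[t_i, u]$ with boundary value $\sigma(X^\e(t_i))$ in place of $\sigma(x_0)$, yields
\begin{equation*}
|I_3| \le LC_K\bigl(\lambda(\e)\e + \lambda^2(\e)\bigr)\delta + \lambda(\e)\e\, C_K,
\end{equation*}
which vanishes as $\e\to 0$ for fixed $\delta$ since $\lambda(\e)\e\to 0$ and $\lambda^2(\e)\to 0$. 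Hence for $\e$ small, $|I_1| + |I_3| \le \delta_0/4$, and the remaining task is to bound the martingale $I_2$. Stopping $X^\e$ at $\tau_K$ makes $I_2^K$ a purely discontinuous martingale with jumps bounded by $R := \lambda(\e)\e\, C_K$ and predictable bracket increment bounded by $V := \lambda^2(\e)C_K^2\delta$ on $[t_i, t_{i+1}]$. Freedman's maximal inequality then gives
\begin{equation*}
\mathbb P\!\left(\sup_{u\in[t_i,t_{i+1}]}\!|I_2^K(u) - I_2^K(t_i)| > \delta_0/4\right) \le 2\exp\!\left(-\frac{(\delta_0/4)^2}{2V + R\delta_0/2}\right).
\end{equation*}
Because $\e/\lambda(\e)\to 0$ in regime (2), one has $R/V = \e/(\lambda(\e)C_K\delta) \to 0$, so $R\delta_0/2 = o(V)$ as $\e\to 0$, the denominator is at most $3\lambda^2(\e)C_K^2\delta$ for $\e$ small, and $\lambda^2(\e)\log\mathbb P \le -\delta_0^2/(C_K'\delta)$. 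Summing over the $\lceil 1/\delta\rceil+1$ subintervals, then letting $\e\to 0$, $\delta\to 0$, $K\to\infty$ in this order yields $-\infty$.

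The main obstacle is regime (2): one must identify the correct sub-Gaussian-versus-Poissonian trade-off in Freedman's inequality and verify that the scaling $\lambda(\e)/\e \to \infty$ makes the jump-size contribution asymptotically negligible relative to the predictable bracket at the large-deviations speed $\lambda^2(\e)$. A naive application of Bernstein's inequality without this observation only produces a bounded negative constant, not $-\infty$ as $\delta\to 0$. Regime (1) is by contrast essentially free once the sample path has been made uniformly bounded by the stopping-time argument.
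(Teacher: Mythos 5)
Your proposal is correct and follows essentially the same route as the paper: both rest on the uniform bound of Lemma~\ref{est to uniform bound}, a $\delta$-partition with a union bound, the decomposition \eqref{represe-sde} with the alternating-sign pairing bound \eqref{e:Tight-Bound} for the $d\theta_\epsilon$ remainder, a stopping-time localization at level $K$, and an exponential (Bernstein/Freedman-type) bound for the compensated Poisson martingale term, which is exactly the content of the paper's Proposition~\ref{exponential inequality}. The only deviations are cosmetic: you treat the regime $\lambda(\epsilon)/\epsilon\to\kappa$ by a purely deterministic estimate instead of running the martingale bound in both regimes, and you invoke a continuous-time Freedman inequality where the paper proves and applies its own exponential inequality, the two being equivalent in substance.
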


\begin{proof}
Denote
$$A^{\e,\delta,\delta_0}=\Big\{\sup_{0\leq t-s\leq\delta, t,s\in[0,1]}\big|X^{\epsilon}(t)-X^{\epsilon}(s)\big|>\delta_0\Big\}$$
and
$$\tilde A^{\e,\delta,\delta_0}=
\Big\{\sup_{k\leq[\frac{1}{\delta}]}\sup_{t\in[s_{k},(s_{k}+\delta)\wedge1]}\big|X^{\epsilon}(t)-X^{\epsilon}(s_{k})\big|>\frac{\delta_0}{3}\Big\},$$
where $\{s_{k}\}$ is a sequence of $\delta$-partition of $[0,1]$. By the following easy fact:
\begin{equation*}
\sup_{0\leq t-s\leq\delta, t,s\in[0,1]}\big|X^{\epsilon}(t)-X^{\epsilon}(s)\big|\leq3\sup_{k\leq[\frac{1}{\delta}]}\sup_{t\in[s_{k},(s_{k}+\delta)\wedge1]}|X^{\epsilon}(t)-X^{\epsilon}(s_{k})\big|,
\end{equation*}
we know
\begin{equation*} \label{e:Ine-0}
\mathbb P(A^{\e,\delta,\delta_0}) \le \mathbb P(\tilde A^{\epsilon,\delta,\delta_0}).
\end{equation*}
It suffices to prove that the limit holds for $\mathbb P(\tilde A^{\epsilon,\delta,\delta_0})$.

Further denote
$$B^{\e,K}=\Big\{\sup_{t\in[0,1]}|X^{\epsilon}(t)|<K\Big\} \ \ \ \ {\rm for} \ K>0,$$
$$B^{\e,\delta,\delta_0}=\Big\{\sup_{k\leq[\frac{1}{\delta}]}\sup_{t\in[s_{k},(s_{k}+\delta)\wedge1]}\lambda(\epsilon)\epsilon^2 \left|\int^{t}_{s_{k}}\sigma(X^{\epsilon}(s-))
\theta_{\epsilon}(s-)\tilde{N}^{\epsilon^{-2}}(ds)\right|<\delta^{2}_{0}\Big\}.$$
It is easy to see that
\begin{equation}
\mathbb P(\tilde A^{\epsilon,\delta,\delta_0})
\le \mathbb P(\tilde A^{\epsilon,\delta,\delta_0}\cap B^{\e,K} \cap B^{\e,\delta,\delta_0})
+\mathbb P((B^{\e,\delta,\delta_0})^c\cap B^{\e,K})+\mathbb P((B^{\e,K})^c).
\end{equation}
We shall show that as $\delta$ and $\epsilon$ are sufficiently small
\begin{equation}  \label{e:Ine-1}
 \tilde A^{\epsilon,\delta,\delta_0}\cap B^{\e,K} \cap B^{\e,\delta,\delta_0}=\emptyset,
\end{equation}
and that
\begin{equation}  \label{e:Ine-2}
\lim_{\delta\to0}\limsup_{\epsilon\to0}\lambda^2(\epsilon)
\log\mathbb P((B^{\e,\delta,\delta_0})^c\cap B^{\e,K})
=-\infty,
\end{equation}
\begin{equation}  \label{e:Ine-3}
\lim_{K\to\infty}\limsup_{\epsilon\to0}\lambda^2(\epsilon)\log\mathbb P((B^{\e,K})^c)=-\infty.
\end{equation}
Combining the previous relations, the limit immediately holds for $\mathbb P(\tilde A^{\epsilon,\delta,\delta_0})$ and we conclude the proof.

It remains to prove the three relations \eqref{e:Ine-1}, \eqref{e:Ine-2} and \eqref{e:Ine-3}, in which the last one has been proved in previous Lemma \ref{est to uniform bound}.

Let us first show \eqref{e:Ine-1}. For any $t\in[s_k, (s_k+\delta)\wedge1]$, on the event $ \tilde A^{\epsilon,\delta,\delta_0}\cap B^{\e,K} \cap B^{\e,\delta,\delta_0}$, by \eqref{represe-sde} and \eqref{e:Tight-Bound} we have
\begin{equation*}%\label{est to difference}
\begin{aligned}
&|X^{\epsilon}(t)-X^{\epsilon}(s_k)|\\
 & \le \int^{t}_{s_k}|b(X^{\epsilon}(s))|ds
+\lambda(\epsilon)\epsilon^{2}\left|\int^{t}_{s_k}\sigma(X^{\epsilon}(s-))\theta_{\epsilon}(s-)\tilde{N}^{\epsilon^{-2}}(ds)\right|+\frac{\lambda(\epsilon)\epsilon^{2}}{2}\left|\int^{t}_{s_{k}}\sigma(X^{\epsilon}(s-))d\theta_{\epsilon}(s)\right| \\
&\le \int^{t}_{s_{k}}|b(X^{\epsilon}(s))|ds+\delta^{2}_{0}+L\lambda(\epsilon)\epsilon \int^{t}_{s_{k}}|b(X^{\epsilon}(s))|ds+L\lambda^2(\epsilon)\int^{t}_{s_{k}}|\sigma(X^{\epsilon}(s))|ds+\lambda(\epsilon)\epsilon|\sigma(x_{0})|\\
%&\leq(1+L\lambda(\epsilon)\epsilon)\int^{t}_{s_{k}}|b(X^{\epsilon}(u-))|du+\delta^{2}_{0}+L\lambda(\epsilon)\epsilon x_{0}+L\lambda^2(\epsilon)\int^{t}_{s_{k}}|\sigma(X^{\epsilon}(u-))|du\\
&\leq(1+L\lambda(\epsilon)\epsilon)(KL+|b(0)|)\delta+\delta^{2}_{0}+L\lambda^2(\epsilon)(KL+|\sigma(0)|)\delta+\lambda(\epsilon)\epsilon|\sigma(x_{0})|,
\end{aligned}
\end{equation*}
where the third inequality is obtained by Lipschitz condition of $b$ and $\sigma$ and $\sup_{0 \le t \le 1} |X^\e(t)|<K$.

Hence, as $\delta$ and $\epsilon$ are sufficiently small and $\delta_{0}<\frac{1}{4}$, from the above bound we know on the event $\tilde A^{\epsilon,\delta,\delta_0}\cap B^{\e,K} \cap B^{\e,\delta,\delta_0}$
\begin{equation*}
\sup_{t \in [s_k, (s_k+\delta)\wedge1]}|X^{\epsilon}(t)-X^{\epsilon}(s_k)| \le \frac{\delta_0}4.
\end{equation*}
Because $k$ is arbitrary, we know on the event $\tilde A^{\epsilon,\delta,\delta_0}\cap B^{\e,K} \cap B^{\e,\delta,\delta_0}$
\begin{equation*}
\sup_{k \le [1/\delta]}\sup_{t \in [s_k, (s_k+\delta)\wedge1]}|X^{\epsilon}(t)-X^{\epsilon}(s_k)| \le \frac{\delta_0}4,
\end{equation*}
which is in the set $(\tilde{A}^{\e,\delta,\delta_0})^c$. Hence, \eqref{e:Ine-1} holds.

Now let us prove \eqref{e:Ine-2}. To proceed the proof, for fixed $k\leq [1/\delta]$, we define the following stopping time:
\begin{align*}
&\xi_{k}=\inf\Big\{t\geq s_{k}; |X^{\epsilon}(t)|\geq K\Big\}.
\end{align*}
It is easy to see that
\begin{equation}\label{delta expansion}
\begin{aligned}
&P\big((B^{\e,\delta,\delta_0})^c\cap B^{\e,K}\big)\\
&\leq\sum_{k=0}^{[1/\delta]} \mathbb P\Big(\sup_{t\in[s_{k},(s_{k}+\delta)\wedge1]}\lambda(\epsilon)\epsilon^2
\left|\int^{t}_{s_{k}}\sigma(X^{\epsilon}(s-))\theta_{\epsilon}(s-)\tilde{N}^{\epsilon^{-2}}(ds)\right|\geq\delta^{2}_{0}, B^{\e,K}\Big)\\
&\leq\sum_{k=0}^{[1/\delta]} \mathbb P\Big(\sup_{t\in[s_{k},(s_{k}+\delta)\wedge1]}\lambda(\epsilon)\epsilon^2
\left|\int^{t}_{s_{k}}\sigma(X^{\epsilon}(s-))\theta_{\epsilon}(s-)\tilde{N}^{\epsilon^{-2}}(ds)\right|\geq\delta^{2}_{0}, \xi_{k}\geq1\Big)\\
&\leq\sum_{k=0}^{[1/\delta]}
\mathbb P\Big(\sup_{t\in[s_{k},(s_{k}+\delta)\wedge\xi_{k}\wedge1]}\lambda(\epsilon)\epsilon^2
\left|\int^{t}_{s_{k}}\sigma(X^{\epsilon}(s-))\theta_{\epsilon}(s-)\tilde{N}^{\epsilon^{-2}}(ds)\right|\geq\delta^{2}_{0}\Big)
\end{aligned}
\end{equation}
Using Proposition \ref{exponential inequality} with
$$
\rho=LK+|\sigma(0)|, 0\leq\iota_2-\iota_1\leq\delta, a=\delta^{2}_{0},
$$
we get for any $\eta>0$
\begin{equation}\label{est to B123-epsilon}
\begin{aligned}
&\limsup_{\epsilon\to0}\lambda^2(\epsilon)
\log\max_{k\leq[1/\delta]}\mathbb P\Big(\sup_{t\in[s_{k},(s_{k}+\delta)\wedge\xi_{k}\wedge1]}\lambda(\epsilon)\epsilon^2
\left|\int^{t}_{s_{k}}\sigma(X^{\epsilon}(s-))\theta_{\epsilon}(s-)\tilde{N}^{\epsilon^{-2}}(ds)\right|\geq\delta^{2}_{0}\Big)\\
& \leq-\eta\delta_0^2+\frac{1}{2}\eta^2\big(LK+|\sigma(0)|\big)^2\delta\limsup_{\epsilon\to0} e^{\epsilon\lambda^{-1}(\epsilon)\eta\big(LK+|\sigma(0)|\big)}\\
&=\left\{\begin{array}{ll}
-\eta\delta_0^2+\frac{1}{2}\eta^2\big(LK+|\sigma(0)|\big)^2\delta, & \lim_{\epsilon\to0}\frac{\lambda(\epsilon)}{\epsilon}=+\infty;\\
-\eta\delta_0^2+\frac{1}{2}\eta^2\big(LK+|\sigma(0)|\big)^2\delta e^{\kappa^{-1}\eta\big(LK+|\sigma(0)|\big)},
& \lim_{\epsilon\to0}\frac{\lambda(\epsilon)}{\epsilon}=\kappa.\\\end{array}\right.
\end{aligned}
\end{equation}
By \eqref{delta expansion}, it can easily be seen that
\begin{align*}
&\limsup_{\epsilon\to0}\lambda^2(\epsilon)
\log\mathbb P((B^{\e,\delta,\delta_0})^c\cap B^{\e,K})\\
&=\limsup_{\epsilon\to0}\lambda^2(\epsilon)
\log\left([1/\delta]+1\right)\\
&\quad+\limsup_{\epsilon\to0}\lambda^2(\epsilon)
\log\max_{k\leq[1/\delta]}\mathbb P\Big(\sup_{t\in[s_{k},(s_{k}+\delta)\wedge\xi_{k}\wedge1]}\lambda(\epsilon)\epsilon^2
\left|\int^{t}_{s_{k}}\sigma(X^{\epsilon}(s-))\theta_{\epsilon}(s-)\tilde{N}^{\epsilon^{-2}}(ds)\right|\geq\delta^{2}_{0}\Big)\\
&=\limsup_{\epsilon\to0}\lambda^2(\epsilon)
\log\max_{k\leq[1/\delta]}\mathbb P\Big(\sup_{t\in[s_{k},(s_{k}+\delta)\wedge\xi_{k}\wedge1]}\lambda(\epsilon)\epsilon^2
\left|\int^{t}_{s_{k}}\sigma(X^{\epsilon}(s-))\theta_{\epsilon}(s-)\tilde{N}^{\epsilon^{-2}}(ds)\right|\geq\delta^{2}_{0}\Big).
\end{align*}
From \eqref{est to B123-epsilon}, the proof of \eqref{e:Ine-2} is completed  by letting $\delta\to0$ and then $\eta\to\infty$.
\end{proof}

\subsection{Proof of Theorem~\ref{thm-multiplicative noise}}
%Let us use Proposition~\ref{pro-exponential tightness-sde},
We will split the proof into two parts, i.e., lower bound and upper bound. \\

(1) {\bf{Lower bound.}} For any $\varphi \in C([0,1])$, we consider
%Firstly, we show the lower bound of large deviations, i.e., for any open subset~$G\subset C([0,1],\mathbb R)$ and~$\varphi\in G$,
%\begin{equation}\label{ULD-SDE}
%\liminf_{\epsilon\to0}\lambda^2(\epsilon)\log\mathbb P\big(X^{\epsilon}\in G\big)\geq\mathcal{I}(\varphi).
%\end{equation}
%where the rate function~$\mathcal{I}(\cdot)$ is defined by~(\ref{rate function-sde}).
%Indeed, for any $\varphi\in C([0,1])$ and $t\in[0,1]$,  define
\begin{align*}
\hat{X}^{\epsilon}(t)
&=x_{0}+\int^{t}_{0}b(\varphi(s))ds+\lambda(\epsilon)\int^{t}_{0}\sigma(\varphi(s))\theta_{\epsilon}(s)ds.
\end{align*}
By Theorem 2.1 in Jiang and Yang (\cite{JY-2021}) and Theorem~\ref{thm-main-result},
we obtain by contraction principle that~$\left\{\hat{X}^{\epsilon}(t), ~t\in[0,1]\right\}$
satisfies large deviations with speed ~$\lambda^2(\epsilon)$ and rate function  $\mathcal{I}^{\varphi}$, where
\begin{equation*}
\begin{aligned}
\mathcal{I}^{\varphi}(x)=&\inf\left\{\int^{1}_{0}\Gamma(\phi(r))dr; ~\phi\in L^2([0,1]) \ {\rm is \ such \ that \ }\right.\\
&\left.\quad x(t)=x_{0}+\int^{t}_{0}b(\varphi(s))ds+\int^{t}_{0}\sigma(\varphi(s))\phi(s)ds \right\}.
\end{aligned}
\end{equation*}
By the definition of $\mathcal{I}$ and $\mathcal{I}^{\varphi}$, it is easily checked that
\begin{equation}\label{embedding of rate function}
\mathcal{I}^{\varphi}(\varphi)=\mathcal{I}(\varphi).
\end{equation}
Observe by (\ref{represe-sde})
\begin{equation}\label{difference-X-tildeX}
\begin{aligned}
\hat{X}^{\epsilon}(t)-X^{\epsilon}(t)&=\int^{t}_{0}\big(b(\hat{X}^{\epsilon}(s))-b(X^{\epsilon}(s))\big)ds\\
&-\lambda(\epsilon)\epsilon\int^{t}_{0}\big(\sigma(\hat{X}^{\epsilon}(s-))-\sigma(X^{\epsilon}(s-))\big)\epsilon\theta^{\epsilon}(s-)\tilde{N}^{\epsilon^{-2}}(ds)\\
&+\mathcal{R}_{1,\epsilon}(t)+\mathcal{R}_{2,\epsilon}(t)+\mathcal{R}_{3,\epsilon}(t)+\mathcal{R}_{4,\epsilon}(t),
\end{aligned}
\end{equation}
where
\begin{align*}
&\mathcal{R}_{1,\epsilon}(t)=\int^{t}_{0}\big(b(\varphi(s))-b(\hat{X}^{\epsilon}(s)))\big)ds,\\
&\mathcal{R}_{2,\epsilon}(t)=\lambda(\epsilon)\int^{t}_{0}\big(\sigma(\varphi_{m}(s))-\sigma(\hat{X}^{\epsilon}(s))\big)\theta_{\epsilon}(s)ds,\\
&\mathcal{R}_{3,\epsilon}(t)=\lambda(\epsilon)\int^{t}_{0}\big(\sigma(\hat{X}^{\epsilon}(s))-\sigma(X^{\epsilon}(s))\big)\theta^{\epsilon}(s)ds\\
&\quad\quad\quad\quad
+\lambda(\epsilon)\epsilon\int^{t}_{0}\big(\sigma(\hat{X}^{\epsilon}(s-))-\sigma(X^{\epsilon}(s-))\big)\epsilon\theta^{\epsilon}(s-)\tilde{N}^{\epsilon^{-2}}(ds),\\
&\mathcal{R}_{4,\epsilon}(t)=\lambda(\epsilon)\int^{t}_{0}\big(\sigma(\varphi(s))-\sigma(\varphi_m(s))\big)\theta_{\epsilon}(s)ds,
\end{align*}
with $\{\varphi_m, m\geq1\}\in C^1([0,1])$ satisfying~$\lim_{m\to\infty}\sup_{t\in[0,1]}|\varphi(t)-\varphi_m(t)|=0$.

For~$\delta>0$, we define the following stopping times used later:
\begin{equation*}
\begin{aligned}
&\chi_i=\inf\Big\{t\geq0; |\mathcal{R}_{i,\epsilon}(t)|\geq \delta^{1/2}\Big\}, \quad i=1,2,3,\\
&\chi_4=\inf\Big\{t\geq0; |\mathcal{R}_{4,\epsilon}(t)|\geq \sup_{t\in[0,1]}|\varphi(t)-\varphi_m(t)|^{1/2}\Big\}
\end{aligned}
\end{equation*}
and further define
$$
\chi=\min_{1 \le i \le 4} \chi_i.
$$
Assume the following two relations hold (which will be proved later)
\begin{equation}\label{expon-ineq-X-tildeX}
\lim_{\delta\to0}\lim_{m\to\infty}\limsup_{\epsilon\to0}\lambda^2(\epsilon)
\log\mathbb P\Big(\sup_{t\in[0,\chi\wedge1]}|\hat{X}^{\epsilon}(t)-X^{\epsilon}(t)|\geq\delta^{1/4}\Big)
=-\infty,
\end{equation}
\begin{equation}\label{negligible-stoppingtime}
\lim_{\delta\to0}\lim_{m\to\infty}\limsup_{\epsilon\to0}\lambda^2(\epsilon)
\log\mathbb P\Big(\sup_{t\in[0,1]}|\hat{X}^{\epsilon}(t)-\varphi(t)|<\delta, \chi<1\Big)
=-\infty.
\end{equation}
Then, for any $a>0$, denote
$$
B(\varphi, a)=\Big\{x\in C([0,1],\mathbb R), \sup_{t\in[0,1]}|x(t)-\varphi(t)|<a\Big\}.
$$
For any $\gamma>0$, we can choose $\delta>0$ such that~$\delta+\delta^{1/4}<\gamma$.
Then, we have
\begin{align*}
&\Big\{\sup_{t\in[0,1]}|\hat{X}^{\epsilon}(t)-\varphi(t)|<\delta\Big\}\\
&=\Big(\Big\{\sup_{t\in[0,1]}|\hat{X}^{\epsilon}(t)-\varphi(t)|<\delta, \chi<1\Big\}\Big)\\
&\quad\cup\Big(\Big\{\sup_{t\in[0,1]}|\hat{X}^{\epsilon}(t)-\varphi(t)|<\delta, \chi \ge 1\Big\}
\cap\Big\{\sup_{t\in[0,1]}|\hat{X}^{\epsilon}(t)-{X}^{\epsilon}(t)|<\delta^{1/4}\Big\}\Big)\\
&\quad\cup\Big(\Big\{\sup_{t\in[0,1]}|\hat{X}^{\epsilon}(t)-\varphi(t)|<\delta, \chi \ge 1\Big\}
\cap\Big\{\sup_{t\in[0,1]}|\hat{X}^{\epsilon}(t)-{X}^{\epsilon}(t)|\geq\delta^{1/4}\Big\}\Big)\\
&\subset\Big(\Big\{\sup_{t\in[0,1]}|\hat{X}^{\epsilon}(t)-\varphi(t)|<\delta, \chi<1\Big\}\Big)\\
&\quad\cup\Big\{\sup_{t\in[0,1]}|{X}^{\epsilon}(t)-\varphi(t)|<\gamma\Big\}
\cup\Big\{\sup_{t\in[0,\chi\wedge1]}|\hat{X}^{\epsilon}(t)-X^{\epsilon}(t)|\geq\delta^{1/4}\Big\}.
%&\subset\Big(\Big\{\|\hat{X}^{\epsilon}-\varphi\|<\delta, \chi<1\Big\}\Big)\\
%&\quad\cup\Big\{{X}^{\epsilon}\in G\Big\}
%\cup\Big\{\sup_{t\in[0,\chi]}|\hat{X}^{\epsilon}(t)-X^{\epsilon}(t)|\geq\delta_0\Big\}.
\end{align*}
By (\ref{expon-ineq-X-tildeX}), \eqref{negligible-stoppingtime} and the following well known relation:
$$
\liminf_{\epsilon\to0}\lambda^2(\epsilon)\log(a(\epsilon)+b(\epsilon))
\leq \max\Big\{\liminf_{\epsilon\to0}\lambda^2(\epsilon)\log (a(\epsilon)),\limsup_{\epsilon\to0}\lambda^2(\epsilon)\log(b(\epsilon))\Big\},
$$
we immeidatley obtain
\begin{equation}\label{e:LamPB}
\begin{split}
\liminf_{\epsilon\to0}\lambda^2(\epsilon)\log\mathbb P\big(X^{\epsilon}\in B(\varphi,\gamma)\big)
& \geq \lim_{\delta\to0}\liminf_{\epsilon\to0}\lambda^2(\epsilon)\log\mathbb P\Big(\hat{X}^{\epsilon} \in B(\varphi,\delta)\Big)  \\
& \geq -\mathcal{I}^{\varphi}(\varphi)=-\mathcal{I}(\varphi),
\end{split}
\end{equation}
where the last equality is derived from \eqref{embedding of rate function}.

Letting $G \subset C([0,1])$ be an open set, for any $\varphi \in G$, there exists some $\eta>0$ so that $B(\varphi, \eta) \subset G$. By \eqref{e:LamPB}, we have
\begin{equation*}
\begin{split}
\liminf_{\epsilon\to0}\lambda^2(\epsilon)\log\mathbb P\big(X^{\epsilon}\in G\big) \ge \liminf_{\epsilon\to0}\lambda^2(\epsilon)\log\mathbb P\big(X^{\epsilon}\in B(\varphi,\eta)\big) \ge -\mathcal{I}(\varphi).
\end{split}
\end{equation*}
 Since $\varphi \in G$ is arbitrary, we know
 \begin{equation*}
\begin{split}
\liminf_{\epsilon\to0}\lambda^2(\epsilon)\log\mathbb P\big(X^{\epsilon}\in G\big)  \ge -\inf_{\varphi \in G}\mathcal{I}(\varphi).
\end{split}
\end{equation*}
So the lower bound of LDP is proved.

%For the upper bound, by Lemmas \ref{est to uniform bound} and \ref{pro-exponential tightness-sde}, for any $M>0$, there exists a compact set
%$\mathcal K_M$ so that
%$$\limsup_{\epsilon\to0}\lambda^2(\epsilon)\log\mathbb P\Big(X^{\epsilon} \in \mathcal K^c_M\Big) \le -M.$$
%For any closed set $F$ of $C([0,1])$, we know $F \subset \mathcal K_M$ is also compact and thus for any (small) $\gamma>0$ there exists finite
%functions $f_1,...,f_n \in F \subset \mathcal K_M$ (depending on $\gamma$) so that $\cup_{i=1}^n B(f_i,\gamma) \supset F \subset \mathcal K_M$.
%Hence,
%\begin{equation}
%\begin{split}
%\limsup_{\epsilon\to0}\lambda^2(\epsilon)\log\mathbb P\Big(X^{\epsilon} \in F\Big) & \le \limsup_{\epsilon\to0}\lambda^2(\epsilon)
%%\log\left(\mathbb P\Big(X^{\epsilon} \in F\cap \mathcal K_M\Big)+\mathbb P\Big(X^{\epsilon} \in \mathcal K^c_M\Big)\right) \\
%& \le \max\{\limsup_{\epsilon\to0}\lambda^2(\epsilon)\log\mathbb P\Big(X^{\epsilon} \in F\cap \mathcal K_M\Big), -M\}  \\
%& \le \max\{\limsup_{\epsilon\to0}\lambda^2(\epsilon)\log \sum_{i=1}^n \mathbb P\Big(X^{\epsilon} \in B(f_i,\gamma)\Big), -M\} \\
%& \le \max\{\max_{1\le i \le n} \limsup_{\epsilon\to0}\lambda^2(\epsilon)\log \mathbb P\Big(X^{\epsilon} \in B(f_i,\gamma)\Big), -M\} \\
%\end{split}
%\end{equation}

It remains to prove \eqref{expon-ineq-X-tildeX} and \eqref{negligible-stoppingtime}.
For $t\in[0,\chi\wedge1)$, from \eqref{difference-X-tildeX} it can easily be seen that
$$
\hat{Y}^{\epsilon}(t)\leq \hat{X}^{\epsilon}(t)-X^{\epsilon}(t)\leq \check{Y}^{\epsilon}(t),
$$
where
\begin{align*}
\check{Y}^{\epsilon}(t)
&=\big(3\delta^{1/2}+\sup_{t\in[0,1]}|\varphi(t)-\varphi_m(t)|^{1/2}\big)+\int^{t}_{0}\big(b(\hat{X}^{\epsilon}(s))-b(X^{\epsilon}(s))\big)ds\\
&\quad-\lambda(\epsilon)\epsilon\int^{t}_{0}\big(\sigma(\hat{X}^{\epsilon}(s-))-\sigma(X^{\epsilon}(s-))\big)\epsilon\theta^{\epsilon}(s-)\tilde{N}^{\epsilon^{-2}}(ds),
\end{align*}
and
\begin{align*}
\hat{Y}^{\epsilon}(t)
&=-\big(3\delta^{1/2}+\sup_{t\in[0,1]}|\varphi(t)-\varphi_m(t)|^{1/2}\big)+\int^{t}_{0}\big(b(\hat{X}^{\epsilon}(s))-b(X^{\epsilon}(s))\big)ds\\
&\quad-\lambda(\epsilon)\epsilon\int^{t}_{0}\big(\sigma(\hat{X}^{\epsilon}(s-))-\sigma(X^{\epsilon}(s-))\big)\epsilon\theta^{\epsilon}(s-)\tilde{N}^{\epsilon^{-2}}(ds).
\end{align*}
By Proposition~\ref{comparison est lem} with $\rho=0$ and $B=M=L$, we have for any $\delta>0$,
\begin{align*}
&\lambda^2(\epsilon)\log\mathbb P\Big(\sup_{t\in[0, \chi\wedge1]}|\hat{X}^{\epsilon}(t)-X^{\epsilon}(t)|\geq\delta^{1/4}\Big)\\
&\leq 2\sqrt{2}L+4L^2\epsilon^2(2+3\lambda^{2}(\epsilon))e^{4L\epsilon/{\lambda(\epsilon)}}
+\log\Big(\frac{2\big(3\delta^{1/2}+\sup_{t\in[0,1]}|\varphi(t)-\varphi_m(t)|^{1/2}\big)^{2}}{\delta^{1/2}}\Big),
\end{align*}
which immediately leads to \eqref{expon-ineq-X-tildeX}.

Now we are at the position to prove \eqref{negligible-stoppingtime}.  The proof will be divided into four parts according to the terms related to $\chi_{i}, i\leq 4$.
%\begin{equation}\label{negligible-stoppingtime-i}
%\lim_{\delta\to0}\lim_{m\to\infty}\limsup_{\epsilon\to0}\lambda^2(\epsilon)
%\log\mathbb P\Big(\|\hat{X}^{\epsilon}-\varphi\|<\delta, \chi_i<1\Big)
%=-\infty, \ \ \ \ \ i=1,2,3,4.
%\end{equation}

On the event $\Big\{\sup_{t\in[0,1]}|\hat{X}^{\epsilon}(t)-\varphi(t)|<\delta\Big\}$, we observe
$$
\sup_{t\in[0,1]}|\mathcal{R}_{1,\epsilon}(t)|\leq L\delta<\delta^{1/2}
$$
as $\delta$ is sufficiently small, this implies that the set $\{\sup_{t\in[0,1]}|\hat{X}^{\epsilon}(t)-\varphi(t)|<\delta, \chi_1<1\}$ is empty. Thus,
\begin{equation}\label{eq1-negligible-stoppingtime}
\lim_{\delta\to0}\lim_{\epsilon\to0}\lambda^2(\epsilon)
\log\mathbb P\Big(\sup_{t\in[0,1]}|\hat{X}^{\epsilon}(t)-\varphi(t)|<\delta, \chi_1<1\Big)
=-\infty.
\end{equation}

By (\ref{Theta-theta-relation}), we rewrite
\begin{equation*}
\begin{aligned}
\mathcal{R}_{2,\epsilon}(t)
&=-\lambda(\epsilon)\epsilon^{2}\int^{t}_{0}\big(\sigma(\varphi_{m}(s))-\sigma(\hat{X}^{\epsilon}(s-))\big)
\theta_{\epsilon}(s-)\tilde{N}^{\epsilon^{-2}}(ds)\\
&\quad-\frac{\lambda(\epsilon)\epsilon^{2}}{2}\int^{t}_{0}\big(\sigma(\varphi_{m}(s))-\sigma(\hat{X}^{\epsilon}(s))\big)d\theta_{\epsilon}(s),\\
\mathcal{R}_{3,\epsilon}(t)
&=-\frac{\lambda(\epsilon)\epsilon^{2}}{2}\int^{t}_{0}\big(\sigma(\hat{X}^{\epsilon}(s-))-\sigma(X^{\epsilon}(s-))\big)d\theta_{\epsilon}(s).
\end{aligned}
\end{equation*}
By the same argument for showing \eqref{e:Tight-Bound} and Lipschitz conditions for $b$ and $\sigma$, we obtain
\begin{equation}\label{estimate-mathcalR2}
\begin{aligned}
|\mathcal{R}_{2,\epsilon}(t)|
&\leq\vartheta\lambda^{2}(\epsilon)\int^{t}_{0}1+|\varphi(s)|+|\varphi'_m(s)|ds
\\
&+\lambda(\epsilon)\epsilon^{2}\Big|\int^{t}_{0}\big(\sigma(\varphi_{m}(s))-\sigma(\hat{X}^{\epsilon}(s-))\big)
\theta_{\epsilon}(s-)\tilde{N}^{\epsilon^{-2}}(ds)\Big|,
\end{aligned}
\end{equation}
and
\begin{equation}\label{estimate-mathcalR4}
\begin{aligned}
|\mathcal{R}_{3,\epsilon}(t)|
&\leq\vartheta\lambda^{2}(\epsilon)\int_0^t1+|\varphi(s)|+|{X}^{\epsilon}(s)|ds,
\end{aligned}
\end{equation}
where $\vartheta$ is a sufficiently large positive constant depending only on $L, b(x_0), b(0), \sigma(x_0)$, $\sigma(0)$ and $\sigma(\varphi_{m}(0))$.

For $\chi_2$, we define the following stopping time:
$$
\hat{\zeta}=\inf\Big\{t\geq0; |\hat{X}^{\epsilon}(t)-\varphi(t)|\geq\delta\Big\}.
$$
For any $\eta>0$, we observe  by \eqref{estimate-mathcalR2}
\begin{align*}
&\lim_{\epsilon\to0}\lambda^2(\epsilon)
\log\mathbb P\Big(\sup_{t\in[0,1]}|\hat{X}^{\epsilon}(t)-\varphi(t)|<\delta, \chi_2<1\Big)\\
&\leq\lim_{\epsilon\to0}\lambda^2(\epsilon)
\log\mathbb P\Big(\sup_{t\in[0,\hat{\zeta}\wedge1]}|\mathcal{R}_{2,\epsilon}(t)|\geq\delta^{1/2}\Big)\\
&\leq \lim_{\epsilon\to0}\lambda^2(\epsilon)
\log\mathbb P\Big(\sup_{t\in[0,\hat{\zeta}\wedge1]}
\lambda(\epsilon)\epsilon\Big|\int^{t}_{0}\big(\sigma(\varphi_{m}(s))-\sigma(\hat{X}^{\epsilon}(s-))\big)
\epsilon\theta_{\epsilon}(s-)\tilde{N}^{\epsilon^{-2}}(ds)\Big|\geq\frac{1}{2}\delta^{1/2}\Big)\\
&\leq -\frac{1}{2}\eta\delta^{1/2}+\frac{\eta^2L^{2}}{2}\big(\delta+\sup_{t\in[0,1]}|\varphi_m(t)-\varphi(t)|\big)^2
\lim_{\epsilon\to0}e^{\epsilon\lambda^{-1}(\epsilon)\eta L\big(\delta+\sup_{t\in[0,1]}|\varphi_m(t)-\varphi(t)|\big)},
\end{align*}
where the last inequality is obtained by the fact that for any $s\in[0,\hat{\zeta}\wedge1]$, $|\sigma(\varphi_{m}(s))-\sigma(\hat{X}^{\epsilon}(s-))|\leq L(\delta+\sup_{t\in[0,1]}|\varphi_m(t)-\varphi(t)|)$ and apply Proposition \ref{exponential inequality} with
$$
0\leq\iota_2-\iota_1\leq1,\quad \rho=L(\delta+\sup_{t\in[0,1]}|\varphi_m(t)-\varphi(t)|),\quad a=\frac{1}{2}\delta^{1/2}.
$$
Consequently, it holds
\begin{align*}
&\lim_{m\to\infty}\lim_{\epsilon\to0}\lambda^2(\epsilon)
\log\mathbb P\Big(\sup_{t\in[0,1]}|\hat{X}^{\epsilon}(t)-\varphi(t)|<\delta, \chi_2<1\Big)\\
&\leq \left\{\begin{array}{ll}
-\frac{1}{2}\eta\delta^{1/2}+\frac{\eta^2 L^{2}}{2}\delta^2, & \lim_{\epsilon\to0}\frac{\lambda(\epsilon)}{\epsilon}=+\infty;\\
-\frac{1}{2}\eta\delta^{1/2}+\frac{\eta^2L^{2}}{2}\delta^2
e^{\eta\delta L/{\kappa}}, & \lim_{\epsilon\to0}\frac{\lambda(\epsilon)}{\epsilon}=\kappa.\\\end{array}\right.
\end{align*}
Choosing $\eta=\delta^{-1}$, we have
\begin{equation}\label{eq2-negligible-stoppingtime-0}
\lim_{\delta\to0}\lim_{m\to\infty}\lim_{\epsilon\to0}\lambda^2(\epsilon)
\log\mathbb P\Big(\sup_{t\in[0,1]}|\hat{X}^{\epsilon}(t)-\varphi(t)|<\delta, \chi_2<1\Big)
=-\infty.
\end{equation}

For $\chi_3$,  as $\e$ is sufficiently small so that $\vartheta\lambda^{2}(\epsilon)\int_0^1(1+|\varphi(s)|+K)ds<\delta^{\frac{1}{2}}$,
by (\ref{estimate-mathcalR4}), $\Big\{\sup_{t\in[0,1]}|X^{\epsilon}(t)|<K,
\sup_{t\in[0,1]}|\mathcal{R}_{3,\epsilon}(t)|\geq\delta^{1/2}\Big\}$ is an empty set and thus
\begin{align*}
&\mathbb P\Big(\chi_3<1\Big)\\
&\leq\mathbb P\Big(\sup_{t\in[0,1]}|X^{\epsilon}(t)|<K,
\sup_{t\in[0,1]}|\mathcal{R}_{3,\epsilon}(t)|\geq\delta^{1/2}\Big)
+\mathbb{P}\Big(\sup_{t\in[0,1]}|X^{\epsilon}(t)|\geq K\Big)\\
&\leq\mathbb{P}\Big(\sup_{t\in[0,1]}|X^{\epsilon}(t)|\geq K\Big).
\end{align*}
By Lemma~\ref{est to uniform bound},
\begin{equation}\label{eq3-negligible-stoppingtime}
\begin{aligned}
&\lim_{\epsilon\to0}\lambda^2(\epsilon)
\log\mathbb P\Big(\chi_3<1\Big)
\leq\lim_{K\to+\infty}\lim_{\epsilon\to0}\lambda^2(\epsilon)
\log\mathbb P\Big(\sup_{t\in[0,1]}|X^{\epsilon}(t)|\geq K\Big)
=-\infty.
\end{aligned}
\end{equation}

For the term related to $\chi_4$,
recall that $\mathcal{R}_{4,\epsilon}(t)=\lambda(\epsilon)\int^{t}_{0}\big(\sigma(\varphi(u))-\sigma(\varphi_m(u))\big)\theta_{\epsilon}(u)du$,
we may assume that $\int^{t}_{s}|\sigma(\varphi(u))-\sigma(\varphi_m(u))|^{2}du\neq 0$ for $0\leq s<t\leq 1$ and $\sup_{u\in[0,1]}|\varphi(u)-\varphi_m(u)|^{2}\neq0$ without loss of generality.

We note that for $0\leq s<t\leq 1$
\begin{equation*}
\int^{t}_{s}|\sigma(\varphi(u))-\sigma(\varphi_m(u))|^{2}du\leq L^{2}(t-s)\sup_{u\in[0,1]}|\varphi(u)-\varphi_m(u)|^{2}.
\end{equation*}
Using Lemma~\ref{lem-exponential estimation},  for any $0\leq\eta<\frac{1}{2}$, we have
\begin{equation*}
\begin{aligned}
&\sup_{0\leq s<t\leq 1}
\mathbb E\exp\left\{\frac{\eta}{L^{2}\lambda^{2}(\epsilon)\sup_{u\in[0,1]}|\varphi(u)-\varphi_m(u)|^{2}}
\cdot\frac{\left|\mathcal{R}_{4,\epsilon}(t)-\mathcal{R}_{4,\epsilon}(s)\right|^2}{t-s}\right\}\\
&\leq\sup_{0\leq s<t\leq 1}
\mathbb E\exp\left\{\frac{\eta\left|\mathcal{R}_{4,\epsilon}(t)-\mathcal{R}_{4,\epsilon}(s)\right|^2}
{\lambda^{2}(\epsilon)\int^{t}_{s}|\sigma(\varphi(u))-\sigma(\varphi_m(u))|^{2}du}\right\}\\
&\leq\sup_{f\in L^{2}([0,1]),\|f\|_{L^{2}}\neq0}
\mathbb E\exp\bigg\{\frac{\eta\left|\int^{1}_{0}f(r)\theta_{\epsilon}(r)dr\right|^2}{\int_0^1f^2(r)dr}\bigg\}
<\infty.
\end{aligned}
\end{equation*}
Applying Theorem A.19 in Friz and Victoir  (\cite{FV-2010}) with $p=2, \zeta(h)=h^{1/3}$, there exists a uniform positive constant $c$, independent of ~$m$ and $\epsilon$ such that
\begin{equation*}
\begin{aligned}
\mathbb E\exp\left\{\frac{c\eta}{L^{2}\lambda^{2}(\epsilon)\sup_{u\in[0,1]}|\varphi(u)-\varphi_m(u)|^{2}}
\cdot\sup_{0\leq s\leq t\leq 1}\frac{\left|\mathcal{R}_{4,\epsilon}(t)-\mathcal{R}_{4,\epsilon}(s)\right|^2}{(t-s)^{1/3}}\right\}
<\infty.
\end{aligned}
\end{equation*}

Note that
$$
\sup_{t\in[0,1]}|\mathcal{R}_{4,\epsilon}(t)|
\leq
\sup_{0\leq s\leq t\leq 1}\frac{\left|\mathcal{R}_{4,\epsilon}(t)-\mathcal{R}_{4,\epsilon}(s)\right|}{(t-s)^{1/6}},
$$
and then
\begin{align*}
&\mathbb P\Big(\sup_{t\in[0,1]}|\mathcal{R}_{4,\epsilon}(t)|\geq\sup_{u\in[0,1]}|\varphi(u)-\varphi_m(u)|^{1/2}\Big)\\
&\leq\mathbb P\Big(\sup_{0\leq s\leq t\leq 1}\frac{\left|\mathcal{R}_{4,\epsilon}(t)-\mathcal{R}_{4,\epsilon}(s)\right|}{(t-s)^{1/6}}
\geq\sup_{u\in[0,1]}|\varphi(u)-\varphi_m(u)|^{1/2}\Big)\\
&\leq
\mathbb E\exp\left\{\frac{c\eta}{L^{2}\lambda^{2}(\epsilon)\sup_{u\in[0,1]}|\varphi(u)-\varphi_m(u)|^{2}}
\cdot\sup_{0\leq s\leq t\leq 1}\frac{\left|\mathcal{R}_{4,\epsilon}(t)-\mathcal{R}_{4,\epsilon}(s)\right|^2}{(t-s)^{1/3}}\right\}\\
&\quad\cdot\exp\left\{\frac{-c\eta}{L^{2}\lambda^{2}(\epsilon)\sup_{u\in[0,1]}|\varphi(u)-\varphi_m(u)|}\right\}.
\end{align*}
Therefore,
\begin{align*}
&\lim_{m\to\infty}\lim_{\epsilon\to0}\lambda^2(\epsilon)
\log\mathbb P\Big(\sup_{t\in[0,1]}|\mathcal{R}_{4,\epsilon}(t)|\geq\sup_{u\in[0,1]}|\varphi(u)-\varphi_m(u)|^{1/2}\Big)\\
&\leq-\lim_{m\to\infty}\frac{c\eta}{L^{2}\sup_{t\in[0,1]}|\varphi(t)-\varphi_m(t)|}=-\infty.
\end{align*}
That is to say,
\begin{equation}\label{eq4-negligible-stoppingtime}
\lim_{m\to\infty}\lim_{\epsilon\to0}\lambda^2(\epsilon)\log\mathbb P\Big(\chi_4<1\Big)
=-\infty.
\end{equation}

Combining (\ref{eq1-negligible-stoppingtime}), (\ref{eq2-negligible-stoppingtime-0}), (\ref{eq3-negligible-stoppingtime}) and (\ref{eq4-negligible-stoppingtime}),
we immediately obtain \eqref{negligible-stoppingtime} and conclude the proof of the lower bound. \\

{\bf (2) Upper bound.}  Thanks to the exponential tightness, we only need to prove local upper bound, more precisely,  for any $\varphi \in C([0,1])$,
\begin{equation}\label{upper bound}
\begin{split}
\lim_{\delta\to0}\limsup_{\epsilon\to0}\lambda^2(\epsilon)\log\mathbb P\big(X^{\epsilon}\in B(\varphi,\delta)\big) \leq -\mathcal{I}(\varphi).
\end{split}
\end{equation}

Similar to \eqref{difference-X-tildeX}, we can write
\begin{equation*}
\begin{aligned}
\hat{X}^{\epsilon}(t)-X^{\epsilon}(t)=\mathcal{\tilde{R}}_{1,\epsilon}(t)+\mathcal{\tilde{R}}_{2,\epsilon}(t)+\mathcal{R}_{4,\epsilon}(t),
\end{aligned}
\end{equation*}
where $\mathcal{R}_{4,\epsilon}$ is given in \eqref{difference-X-tildeX} and
\begin{align*}
&\mathcal{\tilde{R}}_{1,\epsilon}(t)=\int^{t}_{0}\big(b(\varphi(s))-b(X^{\epsilon}(s)))\big)ds,\\
&\mathcal{\tilde{R}}_{2,\epsilon}(t)=\lambda(\epsilon)\int^{t}_{0}\big(\sigma(\varphi_{m}(s))-\sigma(X^{\epsilon}(s))\big)\theta_{\epsilon}(s)ds.
\end{align*}
For any $\delta>0$,  define the following stopping times:
$$
\tilde{\chi}_i=\inf\Big\{t\geq0; |\mathcal{\tilde{R}}_{i,\epsilon}(t)|\geq \delta^{1/2}\Big\}, \quad
\tilde{\chi}=\min\{\tilde{\chi}_{1}, \tilde{\chi}_2, \chi_4\}, \quad i=1,2.
$$
Consequently, we can get
\begin{align*}
&\Big\{\sup_{t\in[0,1]}|X^{\epsilon}(t)-\varphi(t)|<\delta\Big\}\\
&=\Big\{\sup_{t\in[0,1]}|X^{\epsilon}(t)-\varphi(t)|<\delta, \tilde{\chi}<1\Big\}
\cup\Big\{\sup_{t\in[0,1]}|X^{\epsilon}(t)-\varphi(t)|<\delta, \tilde{\chi} \ge 1\Big\}\\
&\subset\Big\{\sup_{t\in[0,1]}|X^{\epsilon}(t)-\varphi(t)|<\delta, \tilde{\chi}<1\Big\}\\
&\quad\cup\Big\{\sup_{t\in[0,1]}|X^{\epsilon}(t)-\varphi(t)|<\delta, \sup_{t\in[0,1]}|\hat{X}^{\epsilon}(t)-X^{\epsilon}(t)|\leq2\delta^{1/2}+\sup_{t\in[0,1]}|\varphi(t)-\varphi_m(t)|^{1/2}\Big\}\\
&\subset\Big\{\sup_{t\in[0,1]}|X^{\epsilon}(t)-\varphi(t)|<\delta, \tilde{\chi}<1\Big\}\\
&\quad\cup\Big\{\sup_{t\in[0,1]}|\hat{X}^{\epsilon}(t)-\varphi(t)|<2\delta^{1/2}+\delta+\sup_{t\in[0,1]}|\varphi(t)-\varphi_m(t)|^{1/2}\Big\}.
\end{align*}
We make a claim whose proof will be given later:
\begin{equation}\label{upper-negligible-stoppingtime}
\lim_{\delta\to0}\lim_{m\to\infty}\limsup_{\epsilon\to0}\lambda^2(\epsilon)
\log\mathbb P\Big(\sup_{t\in[0,1]}|X^{\epsilon}(t)-\varphi(t)|<\delta, \tilde{\chi}<1\Big)
=-\infty.
\end{equation}
Then, we have
\begin{equation*}
\begin{split}
&\lim_{\delta\to0}\limsup_{\epsilon\to0}\lambda^2(\epsilon)\log\mathbb P\big(X^{\epsilon}\in B(\varphi,\delta)\big)\\
& \leq \lim_{\delta\to0}\lim_{m\to\infty}\limsup_{\epsilon\to0}\lambda^2(\epsilon)\log\mathbb P\Big(\hat{X}^{\epsilon} \in B\big(\varphi,2\delta^{1/2}+\delta+\sup_{t\in[0,1]}|\varphi(t)-\varphi_m(t)|^{1/2}\big)\Big)  \\
& \leq -\lim_{\delta\to0}\lim_{m\to\infty}\inf_{f \in B\big(\varphi, 2\delta^{1/2}+\delta+\sup_{t\in[0,1]}|\varphi(t)-\varphi_m(t)|^{1/2}\big)}
 \mathcal{I}^{\varphi}(f).
\end{split}
\end{equation*}
Since $\mathcal{I}^{\varphi}$ is lower semicontinuous, it holds
$$
\lim_{\delta\to0}\lim_{m\to\infty}\inf_{f \in B(\varphi, 2\delta^{1/2}+\delta+\|\varphi-\varphi_m\|^{1/2}_{\infty})} \mathcal{I}^{\varphi}(f)=\mathcal{I}^{\varphi}(\varphi)=\mathcal{I}(\varphi),
$$
which completes the proof of \eqref{upper bound}.

Finally, we turn to the proof of \eqref{upper-negligible-stoppingtime}. Obviously,
\begin{align*}
&\mathbb P\Big(\sup_{t\in[0,1]}|X^{\epsilon}(t)-\varphi(t)|<\delta, \tilde{\chi}<1\Big)\\
&\leq\mathbb P\Big(\sup_{t\in[0,1]}|X^{\epsilon}(t)-\varphi(t)|<\delta, \tilde{\chi}_{1}<1\Big)
+\mathbb P\Big(\sup_{t\in[0,1]}|X^{\epsilon}(t)-\varphi(t)|<\delta, \tilde{\chi}_{2}<1\Big)
+\mathbb P\Big(\chi_{4}<1\Big).
\end{align*}
Since the estimation of $\mathbb P\Big(\chi_{4}<1\Big)$ has been given by (\ref{eq4-negligible-stoppingtime}),
it remains to consider
$$
\mathbb P\Big(\sup_{t\in[0,1]}|X^{\epsilon}(t)-\varphi(t)|<\delta, \tilde{\chi}_{i}<1\Big),\quad i=1,2.
$$

On the event $\Big\{\sup_{t\in[0,1]}|X^{\epsilon}(t)-\varphi(t)|<\delta\Big\}$, we observe
$$
\sup_{t\in[0,1]}|\mathcal{\tilde{R}}_{1,\epsilon}(t)|\leq L\delta<\delta^{1/2}
$$
as $\delta$ is sufficiently small.
So the set $\{\sup_{t\in[0,1]}|X^{\epsilon}(t)-\varphi(t)|<\delta, \tilde{\chi}_1<1\}$ is empty and we have
\begin{equation}\label{eq1-negligible-stoppingtime-tilde}
\lim_{\delta\to0}\lim_{\epsilon\to0}\lambda^2(\epsilon)
\log\mathbb P\Big(\sup_{t\in[0,1]}|X^{\epsilon}(t)-\varphi(t)|<\delta, \tilde{\chi}_1<1\Big)
=-\infty.
\end{equation}

Moreover, from (\ref{Theta-theta-relation}), it follows that
\begin{equation*}
\begin{aligned}
\mathcal{\tilde{R}}_{2,\epsilon}(t)
&=-\lambda(\epsilon)\epsilon^{2}\int^{t}_{0}\big(\sigma(\varphi_{m}(s))-\sigma({X}^{\epsilon}(s-))\big)
\theta_{\epsilon}(s-)\tilde{N}^{\epsilon^{-2}}(ds)\\
&\quad-\frac{\lambda(\epsilon)\epsilon^{2}}{2}\int^{t}_{0}\big(\sigma(\varphi_{m}(s))-\sigma({X}^{\epsilon}(s))\big)d\theta_{\epsilon}(s).
\end{aligned}
\end{equation*}
By the same argument for showing (\ref{e:Tight-Bound}) and (\ref{estimate-mathcalR2}),
 on the event of $\big\{\sup_{t\in[0,1]}|X^{\epsilon}(t)-\varphi(t)|<\delta\big\}$, we obtain
\begin{equation}\label{upper-estimate-mathcalR2}
\begin{aligned}
|\mathcal{\tilde{R}}_{2,\epsilon}(t)|
&\leq\vartheta\lambda^{2}(\epsilon)\int^{t}_{0}1+|{X}^{\epsilon}(s)|+|\varphi'_m(s)|ds
\\
&+\lambda(\epsilon)\epsilon^{2}\Big|\int^{t}_{0}\big(\sigma(\varphi_{m}(s))-\sigma(X^{\epsilon}(s-))\big)
\theta_{\epsilon}(s-)\tilde{N}^{\epsilon^{-2}}(ds)\Big|\\
&\leq\vartheta\lambda^{2}(\epsilon)\int^{t}_{0}2+|\varphi(s)|+|\varphi'_m(s)|ds
\\
&+\lambda(\epsilon)\epsilon^{2}\Big|\int^{t}_{0}\big(\sigma(\varphi_{m}(s))-\sigma(X^{\epsilon}(s-))\big)
\theta_{\epsilon}(s-)\tilde{N}^{\epsilon^{-2}}(ds)\Big|
\end{aligned}
\end{equation}
where $\vartheta$ is a sufficiently large positive constant depending only on~$L$,~$b(0)$, $\sigma(0)$, $b(x_{0}),\sigma(x_{0})$ and $\sigma(\varphi_{m}(0))$.
Define the following stopping time
$$
\zeta=\inf\Big\{t\geq0; |X^{\epsilon}(t)-\varphi(t)|\geq\delta\Big\}.
$$
For any $\eta>0$, it holds by \eqref{upper-estimate-mathcalR2}
\begin{align*}
&\lim_{\epsilon\to0}\lambda^2(\epsilon)
\log\mathbb P\Big(\sup_{t\in[0,1]}|X^{\epsilon}(t)-\varphi(t)|<\delta, \tilde{\chi}_2<1\Big)\\
&\leq\lim_{\epsilon\to0}\lambda^2(\epsilon)
\log\mathbb P\Big(\sup_{t\in[0,\zeta\wedge1]}|\mathcal{\tilde{R}}_{2,\epsilon}(t)|\geq\delta^{1/2}\Big)\\
&\leq\lim_{\epsilon\to0}\lambda^2(\epsilon)
\log\mathbb P\Big(\sup_{t\in[0,\zeta\wedge1]}
\lambda(\epsilon)\epsilon^{2}\Big|\int^{t}_{0}\big(\sigma(\varphi_{m}(s))-\sigma(X^{\epsilon}(s-))\big)
\theta_{\epsilon}(s-)\tilde{N}^{\epsilon^{-2}}(ds)\Big|\geq\frac{1}{2}\delta^{1/2}\Big)\\
&\leq -\frac{1}{2}\eta\delta^{1/2}+\frac{\eta^2L^{2}}{2}\big(\delta+\sup_{t\in[0,1]}|\varphi_m(t)-\varphi(t)|\big)^2
\lim_{\epsilon\to0}e^{\epsilon\lambda^{-1}(\epsilon)\eta L\big(\delta+\sup_{t\in[0,1]}|\varphi_m(t)-\varphi(t)|\big)},
\end{align*}
where the last inequality is obtained by  Proposition \ref{exponential inequality} with
$$
0\leq\iota_2-\iota_1\leq1,\quad \rho=L(\delta+\sup_{t\in[0,1]}|\varphi_m(t)-\varphi(t)|),\quad a=\frac{1}{2}\delta^{1/2}.
$$
Consequently, it holds
\begin{align*}
&\lim_{m\to\infty}\lim_{\epsilon\to0}\lambda^2(\epsilon)
\log\mathbb P\Big(\sup_{t\in[0,1]}|X^{\epsilon}(t)-\varphi(t)|<\delta, \tilde{\chi}_2<1\Big)\\
&\leq \left\{\begin{array}{ll}
-\frac{1}{2}\eta\delta^{1/2}+\frac{\eta^2 L^{2}}{2}\delta^2, & \lim_{\epsilon\to0}\frac{\lambda(\epsilon)}{\epsilon}=+\infty;\\
-\frac{1}{2}\eta\delta^{1/2}+\frac{\eta^2 L^{2}}{2}\delta^2
e^{\eta\delta L/{\kappa}}, & \lim_{\epsilon\to0}\frac{\lambda(\epsilon)}{\epsilon}=\kappa.\\\end{array}\right.
\end{align*}
Choosing $\eta=\delta^{-1}$, we have
\begin{equation*}\label{eq2-negligible-stoppingtime}
\lim_{\delta\to0}\lim_{m\to\infty}\lim_{\epsilon\to0}\lambda^2(\epsilon)
\log\mathbb P\Big(\sup_{t\in[0,1]}|X^{\epsilon}(t)-\varphi(t)|<\delta, \tilde{\chi}_2<1\Big)
=-\infty,
\end{equation*}
which together with (\ref{eq1-negligible-stoppingtime-tilde}) implies \eqref{upper-negligible-stoppingtime}.

%{\color{red}
%For the other terms, we define the following stopping times for~$K>0$ used later:
%$$
%\zeta_{K}=\inf\Big\{t\in[0,1]; |X^{\epsilon}(t)|\geq K\Big\},\quad\hat{\zeta}=\inf\Big\{t\in[0,1]; |\hat{X}^{\epsilon}(t)-\varphi(t)|\geq\delta\Big\}.
%$$
%}

%For the term related to~$\chi_2$ and~$\chi_4$,
%Then, combing~(\ref{eq1-negligible-stoppingtime})-(\ref{eq5-negligible-stoppingtime}), we complete the proof of~(\ref{negligible-stoppingtime}),
%i.e. the lower bound of large deviations for~$X^{\epsilon}$.

%Finally, the proof to the upper bound of large deviations for~$X^{\epsilon}$ is similar, and the details are omitted here.

\section{Appendix} \label{s:a}
In this section, we first show the proof of Proposition \ref{functional LDP} explicitly. Then, some estimations of tail probability for small noise diffusions and
deviation inequalities for Poisson random integral will be given.

\subsection{Proof of Proposition \ref{functional LDP}}

The proof is standard and we give the proof for completeness. Now, we split the proof of this theorem into the following three steps.

\emph{{\bf Step 1}. LDP for $\epsilon\Theta_{\epsilon}(1)=\epsilon^2\int_0^{\epsilon^{-2}} \xi(s)ds$ and the related Cram\'er function.} It seems not easy to use G\"artner-Ellis Theorem to prove this LDP, alternatively we first consider the LDP of empirical process $\epsilon^2\int_0^{\epsilon^{-2}}\delta_{\xi(s)}ds$ and then use contraction principle.
%$$
%\Big\{\epsilon^2\int_0^{\epsilon^{-2}}\delta_{\xi(s)}ds,\quad \epsilon>0\Big\}
%$$

It is easy to see that $\xi(t)=(-1)^{N(t)}$ is a ~$\{-1,1\}$-valued reversible Markov process with stationary distribution $\mu$ satisfying
$$
\mu(1)=\mu(-1)=\frac{1}{2}.
$$
and Q-matrix
$$
Q=\left(\begin{array}{ll}-1\\
~~ 1\\\end{array}
\begin{array}{ll}~~ 1\\
-1\\\end{array}\right).
$$
By \cite[Theorem 4.2.58]{DS-1989}, the empirical measure
$$
\Big\{\epsilon^2\int_0^{\epsilon^{-2}}\delta_{\xi(s)}ds,\quad \epsilon>0\Big\}
$$
satisfies the large deviations with speed $\epsilon^{-2}$ and  rate function $L$ defined by
\begin{equation*}
\begin{aligned}
L(\nu)=\left\{\begin{array}{ll}
<f^{1/2}, -Qf^{1/2}>_{\mu}, &\textrm{if } f=\frac{d\nu}{d\mu};\\
+\infty, & \textrm{otherwise }.\\\end{array}\right.
\end{aligned}
\end{equation*}
Straightforward calculation yields that
\begin{equation}  \label{e:LForm}
L(\nu)=(\sqrt{\nu(1)}-\sqrt{\nu(-1)})^{2}
\end{equation}
with
\begin{equation} \label{e:NuForm}
 \quad \nu(1)+\nu(-1)=1, \ \ \ \ 0\leq\nu(1)\leq1.
\end{equation}
Since $\xi(t)$ is $\{-1,+1\}$-valued, by contraction principle,
$$
\Big\{\epsilon^{2}\int^{\epsilon^{-2}}_{0}\xi(s)ds, \epsilon>0\Big\}
$$
satisfies the large deviation with speed ~$\epsilon^{-2}$ and rate function: for $x \in \mathbb R$,
$$
\Lambda^*(x)=\inf\Big\{L(\nu), \int_{\mathbb R} y\nu(dy)=x\Big\}=\inf\Big\{L(\nu),\nu(1)-\nu(-1)=x\Big\}.
$$
This, together with \eqref{e:NuForm}, implies
$$\nu(1)=\frac{1+x}{2}, \ \ \ \ \ \nu(-1)=\frac{1-x}2, \ \ \ \ \ |x| \le 1,$$
and that $\nu$ is not a probability measure as $|x|>1$. Therefore,
$$
\Lambda^*(x)=\infty, \ \ \ \ |x|>1,
$$
and
$$
\Lambda^*(x)=\left(\sqrt{\frac{1+x}{2}}-\sqrt{\frac{1-x}{2}}\right)^2=1-\sqrt{1-x^2}, \ \ \ \ \ |x| \le 1.
$$

For any ~$\alpha\in\mathbb{R}$, define
$$
\Lambda_{\epsilon^2}(\alpha)=\epsilon^2\log\mathbb E\Big(e^{\alpha\int^{\epsilon^{-2}}_{0}\xi(s)ds}\Big),\quad
\Lambda(\alpha)=\sup_{x\in\mathbb{R}}\Big\{\alpha x-\Lambda^*(x)\Big\}.
$$
Since ~$\Lambda^*$ is a good rate function, by using Varadhan's integral lemma \cite[Theorem 4.3.1]{DZ-1998}, we know
\begin{equation}\label{Lambda-n-Lambda-0}
\Lambda(\alpha)=\lim_{\epsilon\to0}\Lambda_{\epsilon^2}(\alpha).
\end{equation}
Since $\Lambda^*$ is even,
$$\Lambda(\alpha)=\Lambda(-\alpha), \ \ \ \ \alpha \in \mathbb R.$$

\emph{{\bf Step~2.} \ LDP for $(\epsilon\Theta_{\epsilon}(t_1),\cdots,\epsilon\Theta_{\epsilon}(t_k))$ with~$0\leq t_1<t_2<\cdots<t_k\leq1$}. %to establish the large deviations for
%$$
%\Big(\epsilon\Theta_{\epsilon}(t_1),\cdots,\epsilon\Theta_{\epsilon}(t_k)\Big)^{\tau}.
%$$
%Indeed, we have
%$$
%\epsilon^{2}\int^{\epsilon^{-2}}_{0}\xi(s)ds
%=<\epsilon^{2}\int^{\epsilon^{-2}}_{0}\delta_{(-1)^{N(r)}}dr, ~\Gamma>,
%$$
%where ~$\Gamma(\nu)=\int x\nu(dx)$.
%Applying the obtained result in Step~1 and
Denote $\alpha=(\alpha_1,\cdots, \alpha_k)\in\mathbb{R}^k$, let us consider
$$
\Lambda_{\epsilon^2,t_1,\cdots t_k}(\alpha):=\epsilon^2\log \mathbb E\exp\left(\epsilon^{-1}\sum_{i=1}^k\alpha_i\Theta_{\epsilon}(t_i)\right).
$$
By \eqref{Main term pro} we have
\begin{equation*}
\begin{aligned}
\Lambda_{\epsilon^2,t_1,\cdots t_k}(\alpha)&=\epsilon^2\log \mathbb E \exp\left(\sum_{i=1}^k\alpha_i\int_0^{\epsilon^{-2}t_i}\xi(s)ds\right).
%&=\mathbb Ee^{\sum^{k-1}_{i=1}\sum^{k}_{\ell=i}\alpha_{\ell}\int^{\epsilon^{-2}t_{i}}_{\epsilon^{-2}t_{i-1}}\xi(s)ds} \mathbb E\bigg(e^{\alpha_k(-1)^{N(\epsilon^{-1}t_{k-1})}\cdot\int^{\epsilon^{-1}t_{k}}_{\epsilon^{-1}t_{k-1}}
%(-1)^{N(s)-N(\epsilon^{-1}t_{k-1})}ds}\Big|\mathcal{F}_{t_{m-1}}\bigg),
\end{aligned}
\end{equation*}
Observing
$$\sum_{i=1}^k\alpha_i\int_0^{\epsilon^{-2}t_i}\xi(s)ds=\sum^{k}_{i=1}\sum^{i}_{\ell=1}\alpha_i\int^{\epsilon^{-2}t_{\ell}}_{\epsilon^{-2}t_{\ell-1}}\xi(s)ds=\sum^{k}_{\ell=1} \bar\alpha_{\ell,k}\int^{\epsilon^{-2}t_{\ell}}_{\epsilon^{-2}t_{\ell-1}}\xi(s)ds$$
with $\bar \alpha_{\ell,k}=\sum^{k}_{i=\ell}\alpha_{i}$
and writing $\Delta_\ell=\bar \alpha_{\ell,k} \int^{\epsilon^{-2}t_{\ell}}_{\epsilon^{-2}t_{\ell-1}}\xi(s)ds$, we have
\begin{equation} \label{e:SumKDel}
\begin{aligned}
\mathbb E \exp\left(\sum_{i=1}^k\alpha_i\int_0^{\epsilon^{-2}t_i}\xi(s)ds\right)
&=\mathbb E \exp\left(\sum^{k}_{\ell=1}\Delta_\ell\right)\\
&=\mathbb E \left[\exp\left(\sum^{k-1}_{\ell=1}\Delta_\ell\right) \mathbb E\left(e^{\Delta_k}\Big|\mathcal{F}_{\epsilon^{-2} t_{k-1}}\right)\right],
%&=\mathbb E \left[\exp\left(\sum^{k-1}_{\ell=1}\Delta_\ell+\sum^{k}_{i=\ell}\alpha_{i} \xi(\epsilon^{-2}t_{\ell-1})\right) \mathbb E\left(e^{\tilde \Delta_k}\Big|\mathcal{F}_{t_{k-1}}\right)\right] \\
\end{aligned}
\end{equation}
where~$\mathcal{F}_t=\sigma\big(N(s), 0\leq s\leq t\big)$. Moreover, by Markov property we have
\begin{equation*}
\begin{aligned}
\mathbb E\left(e^{\Delta_k}\Big|\mathcal{F}_{t_{k-1}}\right)
&=\mathbb E\left[\exp\left(\bar \alpha_{k,k}(-1)^{N(\epsilon^{-1}t_{k-1})}\cdot\int^{\epsilon^{-2}t_{k}}_{\epsilon^{-2}t_{k-1}}
(-1)^{N(s)-N(\epsilon^{-2}t_{k-1})}ds\right)\Big|\mathcal{F}_{\epsilon^{-2} t_{k-1}}\right]\\
&=\mathbb E\left[\exp\left(\bar \alpha_{k,k} (-1)^{N(\epsilon^{-2}t_{k-1})}\cdot\int^{\epsilon^{-2}t_{k}}_{\epsilon^{-2}t_{k-1}}
(-1)^{N(s)-N(\epsilon^{-2}t_{k-1})}ds\right)\Big|N(\epsilon^{-2}t_{k-1})\right]\\
&=1_{\{(-1)^{N(\epsilon^{-2} t_{k-1})}=1\}}\mathbb E\exp\left(\bar \alpha_{k,k}\int^{\epsilon^{-2}(t_{k}-t_{k-1})}_{0}\xi(s) ds\right)\\
&\quad+1_{\{(-1)^{N(\epsilon^{-2} t_{k-1})}=-1\}}\mathbb E\exp\left(-\bar \alpha_{k,k} \int^{\epsilon^{-2}(t_{k}-t_{k-1})}_{0}\xi(s)ds\right)\\
&=1_{\{(-1)^{N(t_{k-1})}=1\}}\exp\left(\epsilon^{-2}(t_{k}-t_{k-1})\Lambda_{\epsilon^{2}/(t_{k}-t_{k-1})}(\bar \alpha_{k,k})\right)\\
&\quad+1_{\{(-1)^{N(t_{k-1})}=-1\}}\exp\left(\epsilon^{-2}(t_{k}-t_{k-1})\Lambda_{\epsilon^{2}/(t_{k}-t_{k-1})}(-\bar \alpha_{k,k})\right). \\
&=1_{\{(-1)^{N(t_{k-1})}=1\}}\exp\left(\epsilon^{-2}(t_{k}-t_{k-1}) \mathcal{E}_{\epsilon,\bar \alpha_{k,k}}\right)\exp\left(\epsilon^{-2}(t_{k}-t_{k-1}) \Lambda(\bar \alpha_{k,k})\right)\\
&\quad+1_{\{(-1)^{N(t_{k-1})}=-1\}}\exp\left(\epsilon^{-2}(t_{k}-t_{k-1}) \mathcal{E}_{\epsilon,-\bar \alpha_{k,k}}\right)\exp\left(\epsilon^{-2}(t_{k}-t_{k-1}) \Lambda(\bar \alpha_{k,k})\right),
\end{aligned}
\end{equation*}
where $\mathcal{E}_{\epsilon,\alpha}=\Lambda_{\epsilon^{2}/(t_{i}-t_{i-1})}({\alpha})-\Lambda({\alpha})$ and we have used the relation $\Lambda(\alpha)=\Lambda(-\alpha)$.

By (\ref{Lambda-n-Lambda-0}), for any ~$\delta>0$, when $\epsilon$ is small enough, we have
$$
\Big|\mathcal{E}_{\epsilon,\pm\bar \alpha_{k,k}}\Big| \le \delta,
$$
thus
\begin{equation}
\begin{split}
\exp\bigg[\epsilon^{-2}(t_{k}-t_{k-1})(\Lambda(\bar \alpha_{k,k})-\delta)\bigg] \le \mathbb E\left(e^{\Delta_k}\Big|\mathcal{F}_{t_{k-1}}\right)  \le \exp \bigg[\epsilon^{-2}(t_{k}-t_{k-1})(\Lambda(\bar \alpha_{k,k})+\delta)\bigg].
\end{split}
\end{equation}
Recalling \eqref{e:SumKDel} and applying the same argument to
$\mathbb E \left[\exp\left(\sum^{j}_{\ell=1}\Delta_\ell\right)\right]$ ($j=k-1,k-2,...,1$) with an induction, we finally obtain
\begin{equation*} %\label{e:SumKDel}
\begin{split}
\exp\bigg[\epsilon^{-2}\sum_{\ell=1}^k (t_{\ell}-t_{\ell-1}) (\Lambda(\bar \alpha_{\ell,k}-\delta) \bigg]\le \mathbb E \exp\left(\sum^{k}_{\ell=1}\Delta_\ell\right) \le \exp\bigg[\epsilon^{-2}\sum_{\ell=1}^k (t_{\ell}-t_{\ell-1}) (\Lambda(\bar \alpha_{\ell,k}+\delta) \bigg]
%&=\mathbb E \left[\exp\left(\sum^{k-1}_{\ell=1}\Delta_\ell+\sum^{k}_{i=\ell}\alpha_{i} \xi(\epsilon^{-2}t_{\ell-1})\right) \mathbb E\left(e^{\tilde \Delta_k}\Big|\mathcal{F}_{t_{k-1}}\right)\right] \\
\end{split}
\end{equation*}
for sufficiently small $\epsilon$. Letting $\epsilon \rightarrow 0$ and
$\delta \rightarrow 0$ in order, we have
\begin{equation*}
\begin{aligned}
\Lambda_{t_1,\cdots t_k}(\alpha):=\lim_{\epsilon \rightarrow 0} \Lambda_{\epsilon^2,t_1,\cdots t_k}(\alpha)=\sum_{\ell=1}^k (t_{\ell}-t_{\ell-1}) \Lambda(\bar \alpha_{\ell,k}).
%&=\mathbb Ee^{\sum^{k-1}_{i=1}\sum^{k}_{\ell=i}\alpha_{\ell}\int^{\epsilon^{-2}t_{i}}_{\epsilon^{-2}t_{i-1}}\xi(s)ds} \mathbb E\bigg(e^{\alpha_k(-1)^{N(\epsilon^{-1}t_{k-1})}\cdot\int^{\epsilon^{-1}t_{k}}_{\epsilon^{-1}t_{k-1}}
%(-1)^{N(s)-N(\epsilon^{-1}t_{k-1})}ds}\Big|\mathcal{F}_{t_{m-1}}\bigg),
\end{aligned}
\end{equation*}

By Fenchel-Legendre transform, we get
\begin{equation*}
\begin{aligned}
\Lambda^{*}_{t_{1},\cdots,t_{k}}(x)
:&=\sup_{\alpha\in\mathbb{R}^k}\Big\{\sum^{k}_{i=1} \alpha_i x_i-\sum^{k}_{i=1}(t_{i}-t_{i-1})\Lambda(\bar{\alpha}_{i,k})\Big\}\\
&=\sup_{\alpha\in\mathbb{R}^k}\Big\{\sum^{k}_{i=1}(x_i-x_{i-1})\bar{\alpha}_{i,k}-\sum^{k}_{i=1}(t_{i}-t_{i-1})\Lambda(\bar{\alpha}_{i,k})\Big\} \\
&=\sup_{\alpha\in\mathbb{R}^k}\Big\{\sum^{k}_{i=1}(x_i-x_{i-1})\alpha_{i}-\sum^{k}_{i=1}(t_{i}-t_{i-1})\Lambda({\alpha}_{i})\Big\}\\
&=\sum^{k}_{i=1}(t_{i}-t_{i-1})\Lambda^{*}\left(\frac{x_{i}-x_{i-1}}{t_{i}-t_{i-1}}\right),
\end{aligned}
\end{equation*}
where~$x=(x_1,\cdots, x_k)^{\tau}\in\mathbb{R}^k$ and $x_{0}=0$.
Consequently, from G\"{a}rtner-Ellis Theorem, it follows that
$$
\Big(\epsilon\Theta_{\epsilon}(t_1),\cdots,\epsilon\Theta_{\epsilon}(t_k)\Big)^{\tau}
$$
satisfies the large deviations with speed ~$\epsilon^2$ and rate function~$\Lambda^{*}_{t_{1},\cdots,t_{k}}(x)$

{\bf{Step~3}}  To establish functional large deviations for $\{\epsilon \Theta_{\epsilon}(t), t\in[0,1]\}$ equipped with the uniform topology.
Indeed, by Theorem 4.6.9 and following the argument of Lemma ~5.1.6 and Lemma~5.1.8 in Dembo and Zetouni~(\cite{DZ-1998}),
the family of processes $\{\epsilon \Theta_{\epsilon}(t), t\in[0,1]\}$
satisfies the large deviations with speed ~$\epsilon^2$ and  rate function $J$ defined by~(\ref{rate function-main result}),
in ~$C([0,1])$ equipped with the pointwise convergence topology. Moreover, by using the fact that
$$
\big|\epsilon \Theta_{\epsilon}(t)-\epsilon \Theta_{\epsilon}(s)\big|\leq|t-s|,
$$
 we can get  for any $a>0$,
\begin{align*}
&\limsup_{\delta\to0}\limsup_{\epsilon\to0}\epsilon^{2}\log\mathbb P\Big(\sup_{|t-s|<\delta}
\big|\epsilon \Theta_{\epsilon}(t)-\epsilon \Theta_{\epsilon}(s)\big|>a\Big)
=-\infty,
\end{align*}
which implies the exponential tightness of  $\{\epsilon \Theta_{\epsilon}(t), t\in[0,1]\}$
in ~$C([0,1])$.
Therefore, the family of processes $\{\epsilon \Theta_{\epsilon}(t), t\in[0,1]\}$
satisfies the large deviations with speed ~$\epsilon^2$ and  rate function $J$ ,
in ~$C([0,1])$.

\subsection{Estimations of tail probability for small noise diffusions}
Recall that $N^{\alpha}$ is a Poisson random measure with the intensity measure $\alpha dt$ on ~$(\mathbb R_{+}, \mathcal B(\mathbb R_{+}))$
 and $\tilde{N}^{\alpha}(ds)={N}^{\alpha}(ds)-\alpha ds$ is the compensated Poisson random measure.
Now, as the main result of this subsection, the following proposition gives estimation of tail probability for small noise diffusions,
which plays a crucial role in the proof of Theorem \ref{thm-multiplicative noise}.

\begin{pro}\label{comparison est lem}
Assume that $U^{\epsilon}=\{U^{\epsilon}(t), t\in[0,1]\}$ is a continuous stochastic process valued in~$\mathbb{R}$.
Define  ~$\hat{Y}^{\epsilon}=\{\hat{Y}^{\epsilon}(t), t\in[0,1]\}$ and ~$\check{Y}^{\epsilon}=\{\check{Y}^{\epsilon}(t), t\in[0,1]\}$ as follows
\begin{equation*}
\begin{aligned}
&\hat{Y}^{\epsilon}(t)=\hat{y}^{\epsilon}_{0}+\int^{t}_{0}\hat{h}^{\epsilon}(s)ds
+\lambda(\epsilon)\epsilon\int^{t}_{0}\hat{\gamma}^{\epsilon}(s)\tilde{N}^{\epsilon^{-2}}(ds),
\end{aligned}
\end{equation*}
and
\begin{equation*}
\begin{aligned}
&\check{Y}^{\epsilon}(t)=\check{y}^{\epsilon}_{0}+\int^{t}_{0}\check{h}^{\epsilon}(s)ds
+\lambda(\epsilon)\epsilon\int^{t}_{0}\check{\gamma}^{\epsilon}(s)\tilde{N}^{\epsilon^{-2}}(ds).
\end{aligned}
\end{equation*}
For the stopping time $\tau\in[0,1]$ and some positive constants $\rho$, $B$ and $M$, suppose the following assumptions hold.

\noindent(1). For any $t\in[0,\tau]$,
\begin{align*}
&|\hat{h}^{\epsilon}(t)|\vee|\check{h}^{\epsilon}(t)|\leq B\left(\rho^{2}+|U^{\epsilon}(t-)|^{2}\right)^{\frac{1}{2}},\\
&|\hat{\gamma}^{\epsilon}(t)|\vee|\check{\gamma}^{\epsilon}(t)|\leq M\left(\rho^{2}+|U^{\epsilon}(t-)|^{2}\right)^{\frac{1}{2}}.
\end{align*}

\noindent(2). For any $t\in[0,\tau)$, $\check{Y}^{\epsilon}(t)\leq U^{\epsilon}(t)\leq\hat{Y}^{\epsilon}(t)$.

Take $\epsilon$ sufficient small such that $1-\epsilon\lambda(\epsilon)M>\frac{1}{\sqrt{2}}$. Then, for any $\delta>0$,
\begin{equation*}
\begin{aligned}
&\lambda^2(\epsilon)\log\mathbb P\Big(\sup_{t\in[0,\tau]}|U^{\epsilon}(t)|>\delta\Big)\\
&\leq 2\sqrt{2}B+4M^2\epsilon^{2}(2+3\lambda^{2}(\epsilon))e^{4M\epsilon/{\lambda(\epsilon)}}
+\log\Big(\frac{\rho^{2}+|\hat{y}^{\epsilon}_{0}|^{2}+|\check{y}^{\epsilon}_{0}|^{2}}{\rho^{2}+\delta^{2}}\Big).
\end{aligned}
\end{equation*}
\end{pro}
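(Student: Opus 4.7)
The plan is to reduce to a sandwich process, then prove a Doob-type bound via an exponential-moment argument in which the large exponent $p=1/\lambda^{2}(\epsilon)$ is handled multiplicatively rather than through a Taylor expansion.

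\textbf{Step 1 (sandwich reduction).} Under assumption~(2), $|U^{\epsilon}(t)|^{2}\le|\hat Y^{\epsilon}(t)|^{2}\vee|\check Y^{\epsilon}(t)|^{2}$ on $[0,\tau]$. I set
\[
V(t)=\rho^{2}+|\hat Y^{\epsilon}(t)|^{2}+|\check Y^{\epsilon}(t)|^{2},
\]
so that $V(t)\ge \rho^{2}+|U^{\epsilon}(t)|^{2}$ and $V(0)=\rho^{2}+|\hat y^{\epsilon}_{0}|^{2}+|\check y^{\epsilon}_{0}|^{2}$, and I replace the event $\{\sup_{[0,\tau]}|U^{\epsilon}|>\delta\}$ by the larger event $\{\sup_{[0,\tau]}V(t)>\rho^{2}+\delta^{2}\}$.

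\textbf{Step 2 (It\^o bounds on $V$ and on $V^{p}$).} I apply It\^o's formula to $|\hat Y^{\epsilon}|^{2}$ and $|\check Y^{\epsilon}|^{2}$ using the decomposition $\tilde N^{\epsilon^{-2}}=N^{\epsilon^{-2}}-\epsilon^{-2}dt$, and sum. The continuous part of the drift is $2\hat Y^{\epsilon}\hat h^{\epsilon}+2\check Y^{\epsilon}\check h^{\epsilon}$; using assumption~(1), $(|\hat Y|+|\check Y|)\le\sqrt{2V}$, and $(\rho^{2}+|U|^{2})^{1/2}\le\sqrt{V}$, this is bounded by $2\sqrt{2}\,B\,V(t)$. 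The compensator of the jump part contributes at most $\lambda^{2}(\epsilon)\bigl((\hat\gamma^{\epsilon})^{2}+(\check\gamma^{\epsilon})^{2}\bigr)\le 2M^{2}\lambda^{2}(\epsilon)V(t)$. I then pass to $V(t)^{p}$ with $p=1/\lambda^{2}(\epsilon)$. Because $p$ is large, I avoid a pure Taylor expansion of the jump increment and instead bound the ratio directly: assumption~(1) together with $1-\epsilon\lambda(\epsilon)M>1/\sqrt{2}$ yields
\[
\frac{V(t+)}{V(t-)}\le \bigl(1+c\,\epsilon\lambda(\epsilon)M\bigr)^{2}\quad\text{and}\quad \frac{V(t+)}{V(t-)}\ge c'>0,
\]
so $V(t+)^{p}/V(t-)^{p}\le \exp\!\bigl(2p\log(1+c\,\epsilon\lambda(\epsilon)M)\bigr)\le e^{2cM\epsilon/\lambda(\epsilon)}$, which absorbs the large exponent and produces the factor $e^{4M\epsilon/\lambda(\epsilon)}$ in the target bound. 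Assembling the continuous drift and the compensated jump part, I obtain
\[
\text{drift}\bigl(V(t)^{p}\bigr)\le K\,p\,V(t)^{p},\qquad K:=2\sqrt{2}B+4M^{2}\epsilon^{2}\bigl(2+3\lambda^{2}(\epsilon)\bigr)e^{4M\epsilon/\lambda(\epsilon)}.
\]

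\textbf{Step 3 (Doob--Markov).} The previous bound makes $V(t)^{p}e^{-Kpt}$ a positive supermartingale on $[0,\tau]$. Setting $\sigma=\inf\{t\ge 0:V(t)\ge\rho^{2}+\delta^{2}\}$ and applying optional stopping at the bounded stopping time $\sigma\wedge\tau\le 1$ gives $\mathbb E\bigl[V(\sigma\wedge\tau)^{p}\bigr]\le V(0)^{p}e^{Kp}$. Since $V(\sigma)\ge\rho^{2}+\delta^{2}$ on $\{\sigma\le\tau\}$, Markov's inequality yields
\[
\mathbb P\Bigl(\sup_{t\in[0,\tau]}|U^{\epsilon}(t)|>\delta\Bigr)\le e^{Kp}\Bigl(\frac{V(0)}{\rho^{2}+\delta^{2}}\Bigr)^{p},
\]
and taking logarithms and multiplying by $\lambda^{2}(\epsilon)=1/p$ gives the stated inequality.

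\textbf{Main obstacle.} The delicate point is controlling the generator of $V^{p}$ when $p=1/\lambda^{2}(\epsilon)\to\infty$: a termwise Taylor expansion in the jump size $\lambda(\epsilon)\epsilon\hat\gamma^{\epsilon}$ becomes unreliable, so one must estimate the multiplicative ratio $V(t+)^{p}/V(t-)^{p}$ globally, which is precisely where the hypothesis $1-\epsilon\lambda(\epsilon)M>1/\sqrt{2}$ is used (to keep $V(t+)$ comparable to $V(t-)$ and the logarithm usable). Bookkeeping the precise constants $2\sqrt{2}B$, $4M^{2}\epsilon^{2}(2+3\lambda^{2}(\epsilon))$, and $e^{4M\epsilon/\lambda(\epsilon)}$ so that they match the claim is the lengthiest part of the argument.
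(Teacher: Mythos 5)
Your overall architecture is the same as the paper's: pack the two bounding processes into one quantity (the paper uses the $\mathbb R^2$-valued $Y^\epsilon=(\hat Y^\epsilon,\check Y^\epsilon)$ and $\psi(Y)=(\rho^2+|Y|^2)^{1/\lambda^2(\epsilon)}$, you use $V^p$ with $p=1/\lambda^2(\epsilon)$, which is the same object), bound the drift of this exponentiated functional by $KpV^p$, and conclude by a stopping-time/Markov (or Gronwall plus Chebyshev) argument. Steps 1 and 3 are fine, up to the routine localization needed to justify taking expectations of the compensated Poisson integral.

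The gap is in Step 2, and it is exactly at the point you flag as the "main obstacle". The drift of $V^p$ coming from the compensated Poisson integral is $\epsilon^{-2}\bigl[g(Y+\Delta)-g(Y)-\langle\nabla g(Y),\Delta\rangle\bigr]$ with $g=V^p$ and jump size $\Delta=\lambda(\epsilon)\epsilon\gamma^\epsilon(s)$; the subtraction of the first-order term is forced by the compensator and the whole estimate lives off the cancellation it provides. A purely multiplicative ratio bound $V(t+)^p\le e^{cM\epsilon/\lambda(\epsilon)}V(t-)^p$, used "globally" in place of a Taylor expansion as you propose, cannot see this cancellation: bounding the bracket by $(e^{cM\epsilon/\lambda(\epsilon)}-1)V^p$ plus the modulus of the first-order term and multiplying by the intensity $\epsilon^{-2}$ yields a contribution of order $\frac{M}{\lambda(\epsilon)\epsilon}V^p$, i.e.\ $\frac{M\lambda(\epsilon)}{\epsilon}\,p\,V^p$. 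In the regime $\lambda(\epsilon)/\epsilon\to+\infty$ this is not $\le KpV^p$ for any bounded $K$, and even in the regime $\lambda(\epsilon)/\epsilon\to\kappa$ it produces an extra term of size $M\kappa$ in the exponential rate, which is incompatible with the claimed constant $4M^2\epsilon^2(2+3\lambda^2(\epsilon))e^{4M\epsilon/\lambda(\epsilon)}\to 0$. So the asserted inequality $\mathrm{drift}(V^p)\le KpV^p$ with your $K$ does not follow from the ratio bound alone. The paper's proof (Lemma 5.3) keeps the second-order Taylor expansion with Lagrange remainder (the term $\mathcal A^\epsilon$), bounds $\nabla\Phi\otimes\nabla\Phi$ and $\nabla\otimes\nabla\Phi$ at the intermediate point via the elementary inequality of Lemma 5.4 — this is precisely where $1-\epsilon\lambda(\epsilon)M>1/\sqrt2$ is used — and only then invokes the multiplicative estimate $\psi(\tilde Y^\epsilon(s))\le\psi(Y^\epsilon(s))e^{2\sqrt2 M\epsilon/\lambda(\epsilon)}$ to transfer $\psi$ from the intermediate Taylor point back to the current point. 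In other words, the ratio bound supplements the Taylor expansion there; it cannot replace it. To repair your argument, reinstate the second-order expansion (or an integral-remainder/convexity identity capturing the same cancellation) and use your ratio estimate only to control the value of $V^p$ at the intermediate point.
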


To prove Proposition \ref{comparison est lem}, we need the following lemma and its proof will be postponed to the end of this subsection.

\begin{lem}\label{Est lemma}
Define the $\mathbb{R}^{2}$-valued processes ~$Y^{\epsilon}=\{Y^{\epsilon}(t), t\in[0,1]\}$ as follows
\begin{equation}\label{def-Y-sde}
\begin{aligned}
Y^{\epsilon}(t)&=y^{\epsilon}_{0}+\int^{t}_{0}h^{\epsilon}(s)ds
+\lambda(\epsilon)\epsilon\int^{t}_{0}\gamma^{\epsilon}(s)\tilde{N}^{\epsilon^{-2}}(ds).
\end{aligned}
\end{equation}
For the stopping time ~$\tau\in[0,1]$, suppose that there exist some positive constants~$B$,~$\rho$ and~$M$ such that for any $t\in[0,\tau]$,
\begin{equation}\label{condition-b-gamma}
\begin{aligned}
&|h^{\epsilon}(t)|\leq B\left(\rho^{2}+|Y^{\epsilon}(t-)|^{2}\right)^{\frac{1}{2}},\\
&|\gamma^{\epsilon}(t)|\leq M\left(\rho^{2}+|Y^{\epsilon}(t-)|^{2}\right)^{\frac{1}{2}}.
\end{aligned}
\end{equation}

Take $\epsilon$ sufficient small such that $1-\epsilon\lambda(\epsilon)M>\frac{1}{\sqrt{2}}$. Then,  for any $\delta>0$
\begin{equation*}
\begin{aligned}
&\lambda^{2}(\epsilon)\log\mathbb P\Big(\sup_{t\in[0,\tau]}|Y^{\epsilon}(t)|>\delta\Big)\\
&\leq 2B+2M^2\epsilon^{2}(2+3\lambda^{2}(\epsilon))\cdot e^{2\sqrt{2}M\epsilon/{\lambda(\epsilon)}}+\log\Big(\frac{\rho^{2}+\|y^{\epsilon}_{0}\|^{2}}{\rho^{2}+\delta^{2}}\Big).
\end{aligned}
\end{equation*}
\end{lem}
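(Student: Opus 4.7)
The natural test function for obtaining this estimate is $f(y) = (\rho^{2}+|y|^{2})^{1/\lambda^{2}(\epsilon)}$ with exponent $\beta := 1/\lambda^{2}(\epsilon)$. This exponent is chosen precisely so that $\lambda^{2}(\epsilon)\log f(y) = \log(\rho^{2}+|y|^{2})$; combined with a Markov/Doob argument for $f(Y^{\epsilon})$, this is exactly what produces the logarithmic term $\log\frac{\rho^{2}+\|y_{0}^{\epsilon}\|^{2}}{\rho^{2}+\delta^{2}}$ appearing in the statement, while $\lambda^{2}(\epsilon)$ multiplies the constant from the drift of $f(Y^{\epsilon})$.

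The plan has three steps. First, apply the jump It\^o formula to $f(Y^{\epsilon}(t\wedge\tau))$ to obtain a decomposition
\[
f(Y^{\epsilon}(t\wedge\tau)) = f(y_{0}^{\epsilon}) + \int_{0}^{t\wedge\tau}\mathcal{L}f(Y^{\epsilon}(s-))\,ds + M_{t},
\]
where $M_{t}$ is a local martingale and $\mathcal{L}f(y) = \nabla f(y)\cdot h^{\epsilon}(s) + \epsilon^{-2}[f(y+\lambda\epsilon\gamma^{\epsilon}(s)) - f(y) - \nabla f(y)\cdot\lambda\epsilon\gamma^{\epsilon}(s)]$. The goal of the second step is to prove the pointwise inequality $\mathcal{L}f(y)\le c\,f(y)$ on $\{s\le\tau\}$ with an explicit constant $c$ satisfying $\lambda^{2}(\epsilon)c \le 2B + 2M^{2}\epsilon^{2}(2+3\lambda^{2}(\epsilon))e^{2\sqrt{2}M\epsilon/\lambda(\epsilon)}$. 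For the drift, assumption \eqref{condition-b-gamma} combined with the trivial bound $|y|/\sqrt{\rho^{2}+|y|^{2}}\le 1$ gives $\nabla f(y)\cdot h^{\epsilon}/f(y)\le 2\beta B$, contributing the $2B$ term after multiplication by $\lambda^{2}(\epsilon)$. For the jump compensator, write $u = \lambda\epsilon\gamma^{\epsilon}$, $A = 2y\cdot u/(\rho^{2}+|y|^{2})$, $B = |u|^{2}/(\rho^{2}+|y|^{2})$, so that
\[
\frac{f(y+u)-f(y)-\nabla f(y)\cdot u}{f(y)} = (1+A+B)^{\beta} - 1 - \beta A.
\]
Assumption \eqref{condition-b-gamma} yields $|A|\le 2\lambda\epsilon M$ and $B\le\lambda^{2}\epsilon^{2}M^{2}$, and the smallness hypothesis $1-\lambda\epsilon M > 1/\sqrt{2}$ is used precisely to force $|A+B|\le 2\sqrt{2}\,\lambda\epsilon M < 1$, whence $\beta|A+B|\le 2\sqrt{2}\,M\epsilon/\lambda(\epsilon)$. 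Splitting $(1+A+B)^{\beta} - 1 - \beta A = [(1+A+B)^{\beta}-(1+A)^{\beta}] + [(1+A)^{\beta}-1-\beta A]$, bounding the first bracket via the fundamental theorem of calculus and the second via the Taylor remainder estimate $(1+A)^{\beta}-1-\beta A\le\tfrac{1}{2}\beta^{2}A^{2}e^{\beta|A|}$ (valid for $\beta\ge 2$ and $|A|<1$), and then reassembling using $\beta\lambda^{2}(\epsilon)=1$, produces the exponential factor $e^{2\sqrt{2}M\epsilon/\lambda(\epsilon)}$ together with the prefactor $2M^{2}\epsilon^{2}(2+3\lambda^{2}(\epsilon))$.

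Third, since $\mathcal{L}f\le cf$, the process $e^{-c(t\wedge\tau)}f(Y^{\epsilon}(t\wedge\tau))$ is a nonnegative supermartingale started at $f(y_{0}^{\epsilon})$, and Doob's maximal inequality applied after the obvious monotonicity observation $e^{-ct}\ge e^{-c}$ on $[0,1]$ gives
\[
\mathbb{P}\Bigl(\sup_{t\in[0,\tau]}|Y^{\epsilon}(t)|>\delta\Bigr) \le \mathbb{P}\Bigl(\sup_{t\in[0,\tau]}e^{-ct}f(Y^{\epsilon}(t\wedge\tau))>e^{-c}f(\delta)\Bigr) \le e^{c}\,\frac{f(y_{0}^{\epsilon})}{f(\delta)}.
\]
Taking $\lambda^{2}(\epsilon)\log$ on both sides yields the claim. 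The main obstacle is the bookkeeping in the jump-compensator estimate: one has to separate the linear-in-$B$ contribution from the pure Taylor-in-$A$ contribution, and combine them with the sharp envelope $|A+B|\le 2\sqrt{2}\lambda\epsilon M$, in order to reproduce the precise constants $2B$, $2M^{2}\epsilon^{2}(2+3\lambda^{2}(\epsilon))$, and the exponent $2\sqrt{2}M\epsilon/\lambda(\epsilon)$ in the statement; the hypothesis $1-\lambda\epsilon M > 1/\sqrt{2}$ is used exactly once, to keep $|A+B|<1$ so that the Taylor estimate and $(1+A+B)^{\beta-1}\le e^{\beta|A+B|}$ both apply uniformly in $y$.
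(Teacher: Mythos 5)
Your overall strategy is the same as the paper's: the Lyapunov function $\psi(y)=(\rho^{2}+|y|^{2})^{1/\lambda^{2}(\epsilon)}$, the jump It\^o formula, a pointwise bound on the drift of $\psi(Y^{\epsilon})$, and then a Gronwall/supermartingale--Chebyshev step (the paper stops at $\tau\wedge\tau_{1}$ and uses Gronwall plus Markov rather than Doob, but that difference is cosmetic). The genuine problem is in your Step 2, i.e.\ in the claim that the compensator estimate can be assembled into $\lambda^{2}(\epsilon)c\le 2B+2M^{2}\epsilon^{2}(2+3\lambda^{2}(\epsilon))e^{2\sqrt{2}M\epsilon/\lambda(\epsilon)}$. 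You (correctly) write the generator with the intensity factor $\epsilon^{-2}$ in front of the second--order remainder, but then the $\epsilon^{2}$ prefactor in the target constant cannot survive: with $u=\lambda(\epsilon)\epsilon\gamma^{\epsilon}(s)$, $A=2y\cdot u/(\rho^{2}+|y|^{2})$, $Q=|u|^{2}/(\rho^{2}+|y|^{2})$ and $\beta=1/\lambda^{2}(\epsilon)$, your own estimates give
\begin{equation*}
(1+A+Q)^{\beta}-1-\beta A\;\le\;\Big(\beta Q+\tfrac12\beta^{2}A^{2}\Big)e^{\beta|A+Q|}
\;\le\;\Big(\epsilon^{2}M^{2}+\tfrac{2\epsilon^{2}M^{2}}{\lambda^{2}(\epsilon)}\Big)e^{2\sqrt{2}M\epsilon/\lambda(\epsilon)},
\end{equation*}
and after multiplying by the intensity $\epsilon^{-2}$ and then by $\lambda^{2}(\epsilon)$ the jump contribution to $\lambda^{2}(\epsilon)c$ is of order $M^{2}(2+\lambda^{2}(\epsilon))e^{2\sqrt{2}M\epsilon/\lambda(\epsilon)}$, with no factor $\epsilon^{2}$: the $\epsilon^{-2}$ from the compensator exactly cancels the $\epsilon^{2}$ coming from $|u|^{2}=\lambda^{2}(\epsilon)\epsilon^{2}|\gamma^{\epsilon}|^{2}$. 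So the sentence ``reassembling \dots\ produces the prefactor $2M^{2}\epsilon^{2}(2+3\lambda^{2}(\epsilon))$'' is not substantiated and in fact cannot hold for the generator you wrote; what your computation honestly yields is the bound with $\epsilon^{2}$ replaced by a constant, i.e.\ $\lambda^{2}(\epsilon)\log\mathbb{P}(\cdot)\le 2B+2M^{2}(2+3\lambda^{2}(\epsilon))e^{2\sqrt{2}M\epsilon/\lambda(\epsilon)}+\log\frac{\rho^{2}+|y^{\epsilon}_{0}|^{2}}{\rho^{2}+\delta^{2}}$, which is not the statement as printed.

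For comparison: the paper arrives at the $\epsilon^{2}$ prefactor only because its It\^o decomposition \eqref{eq-0-proof-Est lemma}--\eqref{eq-1-proof-Est lemma} writes the drift correction as $\int_{0}^{t}\mathcal{A}^{\epsilon}(s)ds$ with $\mathcal{A}^{\epsilon}$ the bare Taylor remainder, i.e.\ without the compensator intensity $\epsilon^{-2}$; so the very point where your bookkeeping breaks down is the point where the paper's own derivation is questionable, and the weaker (no $\epsilon^{2}$) constant you would actually obtain still suffices for every later application of Proposition~\ref{comparison est lem}, since there $\epsilon/\lambda(\epsilon)$ and $\lambda(\epsilon)$ stay bounded. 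Two smaller points: you should localize before invoking the supermartingale property (the compensated Poisson integral of $\mathcal{B}^{\epsilon}$ is a priori only a local martingale; the paper handles this by stopping at $\tau\wedge\tau_{1}$ with $\tau_{1}$ the exit time from $\{|y|\le\delta\}$, which also keeps $\psi(Y^{\epsilon})$ bounded), and you should avoid reusing the letter $B$ both for the drift constant and for $|u|^{2}/(\rho^{2}+|y|^{2})$.
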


\begin{proof}[\noindent\textbf{\emph{Proof of Proposition~\ref{comparison est lem}}}]
For any $t\in[0,\tau]$,
\begin{equation*}
\begin{aligned}
&|\hat{h}^{\epsilon}(t)|\vee|\check{h}^{\epsilon}(t)|\leq B\Big(\rho^{2}+|\hat{Y}^{\epsilon}(t-)|^{2}+|\check{Y}^{\epsilon}(t-)|^{2}\Big)^{\frac{1}{2}},\\
&|\hat{\gamma}^{\epsilon}(t)|\vee|\check{\gamma}^{\epsilon}(t)|
\leq M\Big(\rho^{2}+|\hat{Y}^{\epsilon}(t-)|^{2}+|\check{Y}^{\epsilon}(t-)|^{2}\Big)^{\frac{1}{2}}.
\end{aligned}
\end{equation*}
Define ~$Y^{\epsilon}(t)=\big(\hat{Y}^{\epsilon}(t), \check{Y}^{\epsilon}(t)\big)$.
Then, ~$Y^{\epsilon}$ satisfies the conditions of Lemma ~\ref{Est lemma} with $y^{\epsilon}_{0}=\big(\hat{y}^{\epsilon}_{0}, \check{y}^{\epsilon}_{0}\big)$.
Consequently, for the stopping time $\tau\in[0,1]$ and any $\delta>0$,
$$
\lambda^{2}(\epsilon)\log\mathbb P\Big(\sup_{t\in[0,\tau]}\|Y^{\epsilon}(t)\|>\delta\Big)
\leq 2\sqrt{2}B+4M^2\epsilon^{2}(2+3\lambda^{2}(\epsilon))e^{4M\epsilon/{\lambda(\epsilon)}}
+\log\Big(\frac{\rho^{2}+\|y^{\epsilon}_{0}\|^{2}}{\rho^{2}+\delta^{2}}\Big).
$$
Together with the fact that ~$\sup_{t\in[0,\tau]}|U^{\epsilon}(t)|^{2}\leq \sup_{t\in[0,\tau]}(|\hat{Y}^{\epsilon}(t)|^{2}+|\check{Y}^{\epsilon}(t)|^{2})=\sup_{t\in[0,\tau]}|Y^{\epsilon}(t)|^{2}$,
we can complete the proof of this proposition.
\end{proof}

To end this subsection, we will give the proof to Lemma \ref{Est lemma}.  Firstly, we give the following auxiliary result.

\begin{lem}\label{Elementary inequality}
Assume that $\rho\geq0$,  $0<\beta<1$, and $a,b\in\mathbb{R}^d$ satisfy
$$
|b|^{2}\leq \beta(\rho^{2}+|a|^{2}).
$$
Then we have
$$
\rho^{2}+|a|^{2}\leq \frac{\rho^{2}+|a+b|^{2}}{(1-\sqrt{\beta})^{2}}.
$$
\end{lem}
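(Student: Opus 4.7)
The inequality is a routine consequence of the reverse triangle inequality once the $\rho$ term is absorbed into an extra coordinate. The plan is to lift the vectors $a,b \in \mathbb R^d$ to $\mathbb R^{d+1}$ by setting $\tilde a = (\rho, a)$ and $\tilde b = (0, b)$, so that
\begin{equation*}
|\tilde a|^2 = \rho^2 + |a|^2, \qquad |\tilde b|^2 = |b|^2, \qquad |\tilde a + \tilde b|^2 = \rho^2 + |a+b|^2.
\end{equation*}
Under this identification the hypothesis becomes $|\tilde b| \leq \sqrt{\beta}\, |\tilde a|$, and the desired conclusion is equivalent to
\begin{equation*}
(1-\sqrt{\beta})\, |\tilde a| \leq |\tilde a + \tilde b|.
\end{equation*}

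Now I would simply apply the reverse triangle inequality $|\tilde a + \tilde b| \geq |\tilde a| - |\tilde b|$ in $\mathbb R^{d+1}$, which combined with $|\tilde b| \leq \sqrt{\beta}\,|\tilde a|$ immediately yields $|\tilde a + \tilde b| \geq (1-\sqrt{\beta})\,|\tilde a|$. Squaring both sides and dividing by $(1-\sqrt{\beta})^2$ (which is positive since $0 < \beta < 1$) gives exactly the claimed bound
\begin{equation*}
\rho^2 + |a|^2 \leq \frac{\rho^2 + |a+b|^2}{(1-\sqrt{\beta})^2}.
\end{equation*}

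There is no real obstacle here; the only point worth noting is that the factor $\rho$ must be handled symmetrically with $|a|$ rather than as a lower-order term, which is exactly what the lift to $\mathbb R^{d+1}$ accomplishes. Any attempt to work directly with the inequality $(1 - \sqrt{\beta})^2(\rho^2 + |a|^2) \leq \rho^2 + |a+b|^2$ by expanding $|a+b|^2 = |a|^2 + 2 a\cdot b + |b|^2$ becomes awkward because the cross term $a \cdot b$ needs to be controlled using $|a|\,|b| \leq \sqrt{\rho^2+|a|^2}\,|b|$, which is precisely the Cauchy--Schwarz bound that the embedding encodes automatically.
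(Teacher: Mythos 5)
Your proof is correct, and it takes a genuinely different route from the paper. The paper argues directly in $\mathbb{R}^d$: it starts from $|a|^{2}\leq|a+b|^{2}+|b|^{2}+2|a+b||b|$, applies Young's inequality with a free parameter $\epsilon$ to split the cross term, absorbs $|b|^{2}$ via the hypothesis to get $\rho^{2}+|a|^{2}\leq\frac{\epsilon(1+\epsilon)}{\epsilon(1-\beta)-\beta}\bigl(\rho^{2}+|a+b|^{2}\bigr)$, and then optimizes by choosing $\epsilon=\frac{\beta+\sqrt{\beta}}{1-\beta}$, which is exactly what produces the constant $(1-\sqrt{\beta})^{-2}$. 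Your lift $\tilde a=(\rho,a)$, $\tilde b=(0,b)$ to $\mathbb{R}^{d+1}$ turns the hypothesis into $|\tilde b|\leq\sqrt{\beta}\,|\tilde a|$ and the conclusion into $(1-\sqrt{\beta})|\tilde a|\leq|\tilde a+\tilde b|$, which is just the reverse triangle inequality; squaring is legitimate since both sides are nonnegative (here $0<\beta<1$ is used). What your approach buys is brevity and transparency: it makes clear that the constant $(1-\sqrt{\beta})^{-2}$ is the natural (and sharp) one, coming from the worst case $\tilde b=-\sqrt{\beta}\,\tilde a$, whereas the paper recovers the same constant only after an explicit optimization over the Young parameter. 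Your closing remark that the direct expansion is ``awkward'' is slightly overstated --- the paper shows it goes through --- but that is a matter of taste, not a gap.
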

\begin{proof}
For any~$\epsilon>0$, it holds
\begin{align*}
|a|^{2}&\leq|a+b|^{2}+|b|^{2}+2|a+b||b|\\
&\leq|a+b|^{2}+|b|^{2}+\epsilon|a+b|^{2}+\frac{|b|^{2}}{\epsilon}\\
&\leq (1+\epsilon)|a+b|^{2}+\Big(1+\frac{1}{\epsilon}\Big)|b|^{2}\\
&\leq(1+\epsilon)|a+b|^{2}+\beta\Big(1+\frac{1}{\epsilon}\Big)(\rho^{2}+|a|^{2}),
\end{align*}
which implies immediately
$$
\rho^{2}+|a|^{2}\leq(1+\epsilon)(\rho^{2}+|a+b|^{2})+\beta\Big(1+\frac{1}{\epsilon}\Big)(\rho^{2}+|a|^{2}).
$$
Therefore, provided that $\epsilon(1-\beta)-\beta>0$, we obtain
$$
\rho^{2}+|a|^{2}\leq\frac{\epsilon(1+\epsilon)}{\epsilon(1-\beta)-\beta}\Big(\rho^{2}+|a+b|^{2}\Big).
$$
Taking $\epsilon=\frac{\beta+\sqrt{\beta}}{1-\beta}$, we have
$$
\rho^{2}+|a|^{2}
\leq\frac{(1+\sqrt{\beta})^{2}}{(1-\beta)^{2}}\left(\rho^{2}+|a+b|^{2}\right)
=\frac{\rho^{2}+|a+b|^{2}}{(1-\sqrt{\beta})^{2}}.
$$
\end{proof}

\begin{proof}[\noindent\textbf{\emph{Proof of Lemma~\ref{Est lemma}}}]
Define for~$u\in\mathbb{R}^d$
$$
\Phi(u)=\log(\rho^{2}+|u|^{2}),\quad
\psi(u)=\exp\Big\{\frac{\Phi(u)}{\lambda^{2}(\epsilon)}\Big\}=(\rho^{2}+|u|^{2})^{\frac{1}{\lambda^2(\epsilon)}}.
$$
By It\^{o}'s formula,  we can write
\begin{equation}\label{eq-0-proof-Est lemma}
\begin{aligned}
\psi(Y^{\epsilon}(t))
&=\psi(y^{\epsilon}_{0})+\int_0^t\mathcal{B}^{\epsilon}(s)\tilde{N}^{\epsilon^{-2}}(ds)\\
&\quad+\int_0^t\langle\nabla\psi(Y^{\epsilon}(s-)),h^{\epsilon}(s)\rangle ds
+\int_0^t\mathcal{A}^{\epsilon}(s)ds,
\end{aligned}
\end{equation}
where
\begin{equation}\label{eq-1-proof-Est lemma}
\begin{aligned}
\mathcal{A}^{\epsilon}(s)&=\exp\Big\{\frac{\Phi(Y^{\epsilon}(s-)+\lambda(\epsilon)\epsilon\gamma^{\epsilon}(s))}{\lambda^2(\epsilon)}\Big\}-\exp\Big\{\frac{\Phi(Y^{\epsilon}(s-)}{\lambda^2(\epsilon)}\Big\}\\
&\quad-\frac{\epsilon\langle\nabla\Phi(Y^{\epsilon}(s-),\gamma^{\epsilon}(s)\rangle}{\lambda(\epsilon)}
\exp\Big\{\frac{\Phi(Y^{\epsilon}(s-)}{\lambda^2(\epsilon)}\Big\}\\
\end{aligned}
\end{equation}
and
$$
\mathcal{B}^{\epsilon}(s)=\exp\Big\{\frac{\Phi(Y^{\epsilon}(s-)+\lambda(\epsilon)\epsilon\gamma^{\epsilon}(s))}{\lambda^2(\epsilon)}\Big\}
-\exp\Big\{\frac{\Phi(Y^{\epsilon}(s-)}{\lambda^2(\epsilon)}\Big\}.
$$

Firstly,  Taylor formula gives
\begin{equation}\label{eq-2-proof-Est lemma}
\begin{aligned}
\mathcal{A}^{\epsilon}(s)
&=\frac{\epsilon^{2}}{2\lambda^2(\epsilon)}\psi(\tilde{Y}^{\epsilon}(s))\langle\nabla\Phi(\tilde{Y}^{\epsilon}(s)\otimes\nabla\Phi(\tilde{Y}^{\epsilon}(s),
\gamma^{\epsilon}(s)\otimes\gamma^{\epsilon}(s)\rangle\\
&\quad-\frac{\epsilon^{2}}{2}\psi(\tilde{Y}^{\epsilon}(s))\langle\nabla\otimes\nabla\Phi\big(\tilde{Y}^{\epsilon}(s)\big),
\gamma^{\epsilon}(s)\otimes\gamma^{\epsilon}(s)\rangle,
\end{aligned}
\end{equation}
where~$\tilde{Y}^{\epsilon}(s)=Y^{\epsilon}(s-)+\theta\lambda(\epsilon)\epsilon\gamma(s)$,
$\theta\in[0,1]$. Now, we will give the estimates of above terms respectively.

Let~$a=Y^{\epsilon}(s-)$, ~$b=\theta\lambda(\epsilon)\epsilon\gamma^{\epsilon}(s)$. For $s\in[0,\tau]$,
 we have by (\ref{condition-b-gamma})
$$
|b|^{2}\leq\lambda^2(\epsilon)\epsilon^{2}\|\gamma^{\epsilon}(s)\|^{2}\leq M^2\epsilon^{2}\lambda^2(\epsilon)(\rho^{2}+|a|^{2}).
$$
Then, from Lemma \ref{Elementary inequality}, it follows
\begin{equation}\label{eq3-proof-Est lemma}
\begin{aligned}
|\gamma^{\epsilon}(s)|^{2}&\leq M^2\left(\rho^{2}+|Y^{\epsilon}(s-)|^{2}\right)\\
&\leq\frac{M^2}{\big(1- \epsilon\lambda(\epsilon)M\big)^{2}}
\cdot\left(\rho^{2}+|\tilde{Y}^{\epsilon}(s)|^{2}\right)\\
&\leq 2M^{2}\left(\rho^{2}+|\tilde{Y}^{\epsilon}(s)|^{2}\right) ,
\end{aligned}
\end{equation}
where the last inequality is obtained by $1-\epsilon\lambda(\epsilon)M>\frac{1}{\sqrt{2}}$ and thus for any $s\leq\tau$,
\begin{equation}\label{eq4-proof-Est lemma}
\begin{aligned}
&\Big|\langle\nabla\Phi(\tilde{Y}^{\epsilon}(s)\otimes\nabla\Phi(\tilde{Y}^{\epsilon}(s),
\gamma^{\epsilon}(s)\otimes\gamma^{\epsilon}(s)\rangle\Big|\\
&\leq\frac{4|\gamma^{\epsilon}(s)|^{2}\big|\tilde{Y}^{\epsilon}(s)\big|^{2}}
{\Big(\rho^{2}+\big|\tilde{Y}^{\epsilon}(s)\big|^{2}\Big)^{2}}
\leq \frac{8M^2\big|\tilde{Y}^{\epsilon}(s)\big|^{2}}{\rho^{2}+\big|\tilde{Y}^{\epsilon}(s)\big|^{2}}\leq 8M^2.
\end{aligned}
\end{equation}
Moreover, there exists $\theta'\in[0,1]$ such that
$$
\psi(\tilde{Y}^{\epsilon}(s))
=\psi(Y^{\epsilon}(s))
\exp\left(\frac{2\epsilon\theta}{\lambda(\epsilon)}
\cdot\frac{<Y^{\epsilon}(s-)+\theta'\theta\lambda(\epsilon)\epsilon\gamma^{\epsilon}(s), \gamma^{\epsilon}(s)>}{\rho^{2}+|Y^{\epsilon}(s-)+\theta'\theta\lambda(\epsilon)\epsilon\gamma^{\epsilon}(s)|^{2}}\right).
$$
Following the same line as in the proof of (\ref{eq3-proof-Est lemma}), we have for $s\in[0,\tau]$
$$
|\gamma^{\epsilon}(s)|^{2}\leq 2M^2\left(\rho^{2}+|Y^{\epsilon}(s-)+\theta'\theta\lambda(\epsilon)\epsilon\gamma^{\epsilon}(s)|^{2}\right).
$$
Therefore, we obtain
\begin{equation}\label{eq7-proof-Est lemma}
\psi(\tilde{Y}^{\epsilon}(s))\leq\psi(Y^{\epsilon}(s))e^{2\sqrt{2}M\epsilon/{\lambda(\epsilon)}},\quad s\leq\tau.
\end{equation}
Now, the fact $\nabla\otimes\nabla\Phi_{i,j}(u)=\frac{2\delta_{i,j}}{\rho^{2}+\|u\|^{2}}-\frac{4u_{i}u_{j}}{(\rho^{2}+\|u\|^{2})^{2}}$
gives the following operator norm
\begin{equation*}
\Big\|\nabla\otimes\nabla\Phi\big(\tilde{Y}^{\epsilon}(s)\big)\Big\|
\leq \frac{6}{\rho^{2}+\big|\tilde{Y}^{\epsilon}(s)\big|^{2}},
\end{equation*}
which together with~(\ref{eq3-proof-Est lemma}) implies
\begin{equation}\label{eq5-proof-Est lemma}
\Big|\langle\nabla\otimes\nabla\Phi\big(\tilde{Y}^{\epsilon}(s)\big),
\gamma^{\epsilon}(s)\otimes\gamma^{\epsilon}(s)\rangle\Big|
\leq 12M^2.
\end{equation}
Combing ~(\ref{eq4-proof-Est lemma}), \eqref{eq7-proof-Est lemma} and~(\ref{eq5-proof-Est lemma}), we have
\begin{equation}\label{eq6-proof-Est lemma}
\big|\mathcal{A}^{\epsilon}(s)\big|\leq \left(4M^2\epsilon^{2}\lambda^{-2}(\epsilon)+6M^2\epsilon^{2}\right)e^{2\sqrt{2}M\epsilon/{\lambda(\epsilon)}}\psi(Y^{\epsilon}(s)).
\end{equation}

Secondly, it holds for any $s\leq\tau$,
$$
\big|\langle\nabla\psi(Y^{\epsilon}(s-)),h^{\epsilon}(s)\rangle\big|\leq\frac{2B}{\lambda^2(\epsilon)}\psi(Y^{\epsilon}(s-)).
$$
Together with (\ref{eq-0-proof-Est lemma}) and (\ref{eq6-proof-Est lemma}), we can get for any $t\leq\tau$,
\begin{align*}
\psi(Y^{\epsilon}(t))
&\leq\psi(y^{\epsilon}_{0})+\int_0^t\mathcal{B}^{\epsilon}(s)\tilde{N}^{\epsilon^{-2}}(ds)\\
&\quad+\left(\Big(4M^2\epsilon^{2}\lambda^{-2}(\epsilon)+6M^2\epsilon^{2}\Big)e^{2\sqrt{2}M\epsilon/{\lambda(\epsilon)}}
+2B\lambda^{-2}(\epsilon)\right)\int^{t}_{0}\psi(Y^{\epsilon}(s))ds.
\end{align*}

Finally, letting~$\tau_{1}:=\inf\left\{t\geq0; |Y^{\epsilon}(t)|\geq\delta\right\}$, we have
\begin{align*}
&\mathbb{E}\psi(Y^{\epsilon}(\tau\wedge\tau_{1}\wedge t))\\
&\leq\psi(y^{\epsilon}_0)+\left(\Big(4M^2\epsilon^{2}\lambda^{-2}(\epsilon)+6M^2\epsilon^{2}\Big)e^{2\sqrt{2}M\epsilon/{\lambda(\epsilon)}}
+2B\lambda^{-2}(\epsilon)\right)
\int^{t}_{0}\psi(Y^{\epsilon}(\tau\wedge\tau_{1}\wedge s))ds,
\end{align*}
which implies by Gronwall inequality
$$
\mathbb{E}\psi(Y^{\epsilon}(\tau\wedge\tau_{1}))
\leq \psi(y^{\epsilon}_0)\exp\left\{\Big(4M^2\epsilon^{2}\lambda^{-2}(\epsilon)+6M^2\epsilon^{2}\Big)e^{2\sqrt{2}M\epsilon/{\lambda(\epsilon)}}
+2B\lambda^{-2}(\epsilon)\right\}.
$$
Therefore,
\begin{align*}
&\mathbb P\Big(\sup_{t\in[0,\tau]}|Y^{\epsilon}(t)|>\delta\Big)\\
&\leq \mathbb P\Big(\psi(Y^{\epsilon}(\tau\wedge\tau_{1}))\geq\psi(\delta)\Big)\\
&\leq\frac{\mathbb{E}\psi(Y^{\epsilon}(\tau\wedge\tau_{1}))}{\psi(\delta)}\\
&=\Big(\frac{\rho^{2}+|y^{\epsilon}_0|^{2}}{\rho^{2}+\delta^{2}}\Big)^{\frac{1}{\lambda^2(\epsilon)}}
\exp\left\{\Big(4M^2\epsilon^{2}\lambda^{-2}(\epsilon)+6M^2\epsilon^{2}\Big)e^{2\sqrt{2}M\epsilon/{\lambda(\epsilon)}}
+2B\lambda^{-2}(\epsilon)\right\},
\end{align*}
which completes the proof of this lemma.
\end{proof}

\subsection{Deviation inequalities for Poisson random integrals}

\begin{pro}\label{exponential inequality}
 For the stopping time $\tau$, assume that there exist some constants $\rho$, $\iota_1$ and $\iota_2$ such that
$$
\iota_1\leq\tau\leq\iota_2,\quad\sup_{s\in[\iota_1,\tau]}|\gamma(s)|\leq\rho.
$$

For any $a>0$ and $\eta>0$, we have
\begin{equation}\label{eq-lem-exponential inequality}
\begin{aligned}
 &P\Big(\sup_{t\in[\iota_1,\tau]}\lambda(\epsilon)\epsilon\int^{t}_{\iota_1}\gamma(s)\tilde{N}^{\epsilon^{-2}}(ds)>a\Big)\\
 &\leq \exp\left\{-\lambda^{-2}(\epsilon)\Big(\eta a-\frac{1}{2}\eta^2\rho^2(\iota_2-\iota_1)e^{\epsilon\lambda^{-1}(\epsilon)\eta\rho}\Big)\right\}.
\end{aligned}
\end{equation}
\end{pro}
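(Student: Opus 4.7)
\medskip

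\noindent\textbf{Proof plan for Proposition \ref{exponential inequality}.}

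The plan is to run the standard exponential-martingale/Chernoff argument for compensated Poisson integrals, carefully tracking the scaling factor $\lambda(\epsilon)\epsilon$. Write $X_t=\lambda(\epsilon)\epsilon\int_{\iota_1}^{t}\gamma(s)\tilde N^{\epsilon^{-2}}(ds)$. For each parameter $\alpha>0$ (to be chosen at the end), I would introduce the Dol\'eans exponential
\begin{equation*}
\mathcal{M}_t=\exp\!\left\{\alpha X_t-\epsilon^{-2}\!\int_{\iota_1}^{t}\!\left(e^{\alpha\lambda(\epsilon)\epsilon\gamma(s)}-1-\alpha\lambda(\epsilon)\epsilon\gamma(s)\right)ds\right\},
\end{equation*}
which is a nonnegative local martingale; because $|\gamma|\le\rho$ on $[\iota_1,\tau]$ and $\tau\le\iota_2<\infty$, the jumps and compensator are bounded, so by standard Poisson stochastic calculus $\mathcal M_{t\wedge\tau}$ is a genuine martingale with mean one.

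Next I would estimate the compensator using the elementary inequality $e^x-1-x\le \tfrac{x^2}{2}e^{|x|}$ with $x=\alpha\lambda(\epsilon)\epsilon\gamma(s)$, together with $|\gamma(s)|\le\rho$ on $[\iota_1,\tau]$ and the deterministic bound $\tau\le\iota_2$, obtaining
\begin{equation*}
\epsilon^{-2}\!\int_{\iota_1}^{t\wedge\tau}\!\!\left(e^{\alpha\lambda(\epsilon)\epsilon\gamma(s)}-1-\alpha\lambda(\epsilon)\epsilon\gamma(s)\right)ds\le\tfrac{1}{2}\alpha^{2}\lambda^{2}(\epsilon)\rho^{2}(\iota_2-\iota_1)\,e^{\alpha\lambda(\epsilon)\epsilon\rho}
\end{equation*}
uniformly in $t$. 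Since this upper bound is deterministic, one gets the pointwise inequality $e^{\alpha X_{t\wedge\tau}}\le \mathcal M_{t\wedge\tau}\cdot\exp\!\left\{\tfrac{1}{2}\alpha^{2}\lambda^{2}(\epsilon)\rho^{2}(\iota_2-\iota_1)e^{\alpha\lambda(\epsilon)\epsilon\rho}\right\}$.

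Then I would apply Doob's maximal inequality to the nonnegative martingale $\mathcal M_{\cdot\wedge\tau}$: the event $\{\sup_{t\in[\iota_1,\tau]}X_t>a\}$ implies $\sup_{t}\mathcal M_{t\wedge\tau}>\exp\{\alpha a-\tfrac{1}{2}\alpha^{2}\lambda^{2}(\epsilon)\rho^{2}(\iota_2-\iota_1)e^{\alpha\lambda(\epsilon)\epsilon\rho}\}$, so
\begin{equation*}
P\!\left(\sup_{t\in[\iota_1,\tau]}X_t>a\right)\le \exp\!\left\{-\alpha a+\tfrac{1}{2}\alpha^{2}\lambda^{2}(\epsilon)\rho^{2}(\iota_2-\iota_1)e^{\alpha\lambda(\epsilon)\epsilon\rho}\right\}.
\end{equation*}
Finally I would choose $\alpha=\eta\lambda^{-2}(\epsilon)$, which gives $\alpha a=\lambda^{-2}(\epsilon)\eta a$, $\alpha^{2}\lambda^{2}(\epsilon)=\lambda^{-2}(\epsilon)\eta^{2}$, and $\alpha\lambda(\epsilon)\epsilon=\epsilon\lambda^{-1}(\epsilon)\eta$, yielding exactly \eqref{eq-lem-exponential inequality}.

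The only delicate point I expect is the justification of the martingale property up to the stopping time $\tau$ (equivalently, applying optional stopping to $\mathcal M$): this requires using the deterministic upper bound $\tau\le\iota_2$ together with $|\gamma|\le\rho$ to control the compensator and invoke dominated convergence. Everything else is a routine Chernoff bound plus Doob's inequality.
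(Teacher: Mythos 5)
Your proof is correct and yields exactly the stated bound, but the mechanics differ from the paper's. The paper first introduces the hitting time $\hat{\tau}=\inf\{t\geq\iota_1;\ \lambda(\epsilon)\epsilon\int_{\iota_1}^{t}\gamma(s)\tilde{N}^{\epsilon^{-2}}(ds)\geq a\}$, reduces the supremum event to $\{X_{\tau\wedge\hat{\tau}}\geq a\}$, applies Chebyshev's inequality to that single stopped variable, and then controls the exponential moment by decoupling the Dol\'eans exponential from its compensator via H\"older's inequality with exponent $p>1$, finishing with the limit $p\to1$; the same elementary inequality $e^{x}-1-x\leq\tfrac{x^{2}}{2}e^{|x|}$ and the same tilting (your $\alpha=\eta\lambda^{-2}(\epsilon)$ equals their $\tfrac{\epsilon\eta}{\lambda(\epsilon)}$ acting on the raw integral) appear in both. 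You instead bound the compensator deterministically on $[\iota_1,\tau]$ (using $|\gamma|\leq\rho$ and $\tau\leq\iota_2$) and apply Doob's maximal inequality to the stopped exponential (super)martingale, which handles the running supremum in one stroke and makes both the auxiliary stopping time $\hat{\tau}$ and the H\"older/$p\to1$ step unnecessary; this is arguably the cleaner route, and the same deterministic bound could in fact have spared the paper its H\"older step as well. One small remark on the point you flag as delicate: you do not actually need $\mathcal{M}_{\cdot\wedge\tau}$ to be a true martingale, since a nonnegative local martingale started at $1$ is a supermartingale and Doob's maximal inequality $P(\sup_t\mathcal{M}_{t\wedge\tau}\geq c)\leq 1/c$ already holds in that generality, so the boundedness of the jumps is only needed for the (routine) local‑martingale property of the Dol\'eans exponential for predictable $\gamma$.
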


\begin{proof}
 Define the stopping time
$$
\hat{\tau}=\inf\Big\{t\geq\iota_1; \lambda(\epsilon)\epsilon\int^{t}_{\iota_1}\gamma(s)\tilde{N}^{\epsilon^{-2}}(ds)\geq a\Big\}.
$$
It holds that
\begin{align*}
&\Big\{\sup_{t\in[\iota_1,\tau]}\lambda(\epsilon)\epsilon\int^{t}_{\iota_1}\gamma(s)\tilde{N}^{\epsilon^{-2}}(ds)>a\Big\}
\subset
\Big\{\lambda(\epsilon)\epsilon\int^{\tau\wedge\hat{\tau}}_{\iota_1}\gamma(s)\tilde{N}^{\epsilon^{-2}}(ds)\geq a\Big\}.
\end{align*}
For~$\eta>0$, Chebyshev inequality gives that
\begin{align*}
&\mathbb P\Big(\lambda(\epsilon)\epsilon\int^{\tau\wedge\hat{\tau}}_{\iota_1}\gamma(s)\tilde{N}^{\epsilon^{-2}}(ds)\geq a\Big)\\
&\leq\exp\Big\{-\lambda^{-2}(\epsilon)\eta a\Big\}
\mathbb E\exp\Big\{\frac{\epsilon\eta}{\lambda(\epsilon)}\int^{\tau\wedge\hat{\tau}}_{\iota_1}\gamma(s)\tilde{N}^{\epsilon^{-2}}(ds)\Big\}.
\end{align*}
Notice that for any~$p>1$
$$
\bigg\{\exp\Big\{\frac{p\epsilon\eta}{\lambda(\epsilon)}\int^{\tau\wedge\hat{\tau}\wedge t}_{\iota_1}\gamma(s)\tilde{N}^{\epsilon^{-2}}(ds)
-\epsilon^{-2}\int^{\tau\wedge\hat{\tau}\wedge t}_{\iota_1}
\big(e^{\frac{p\epsilon\eta}{\lambda(\epsilon)}\gamma(s)}-1-\frac{p\epsilon\eta}{\lambda(\epsilon)}\gamma(s)\big)ds\Big\}, t\geq0\bigg\}
$$
is an exponential martingale.
Therefore,  by H\"{o}lder's inequality, for $\frac{1}{p}+\frac{1}{q}=1$ with $p>0$, $q>0$,
\begin{align*}
&\mathbb E\exp\Big\{\frac{\epsilon\eta}{\lambda(\epsilon)}\int^{\tau\wedge\hat{\tau}}_{\iota_1}\gamma(s)\tilde{N}^{\epsilon^{-2}}(ds)\Big\}\\
&\leq\mathbb E^{\frac{1}{q}}\exp\Big\{\frac{q\epsilon^{-2}}{p}\int^{\tau\wedge\hat{\tau}}_{\iota_1}
\big(e^{\frac{p\epsilon\eta}{\lambda(\epsilon)}\gamma(s)}-1-\frac{p\epsilon\eta}{\lambda(\epsilon)}\gamma(s)\big)ds\Big\}\\
&\leq \exp\left\{\frac{1}{2}\lambda^{-2}(\epsilon)p\eta^2\rho^2(\iota_2-\iota_1)e^{\epsilon\lambda^{-1}(\epsilon)p\eta\rho}\right\},
\end{align*}
where the last inequality is derived from the fact that
$$
e^x-1-x\leq \frac{x^2}{2}e^{|x|},\quad x\in\mathbb{R}.
$$
Consequently, it holds that
\begin{align*}
 &P\Big(\sup_{t\in[\iota_1,\tau]}\lambda(\epsilon)\epsilon\int^{t}_{\iota_1}\gamma(s)\tilde{N}^{\epsilon^{-2}}(ds)>a\Big)\\
 &\leq \exp\left\{-\lambda^{-2}(\epsilon)\Big(\eta a-\frac{1}{2}p\eta^2\rho^2(\iota_2-\iota_1)e^{\epsilon\lambda^{-1}(\epsilon)p\eta\rho}\Big)\right\}.
\end{align*}
Letting $p\to1$, we can get (\ref{eq-lem-exponential inequality}) and complete the proof of this proposition.
\end{proof}

{\bf Acknowledgements:} Hui JIANG is supported by the Fundamental Research Funds for the Central Universities
(No.NS2022069) and National Natural Science Foundation of China(Grant NOs.11771209, 11971227). Lihu Xu is supported by National Natural Science Foundation of China No. 12071499, Macao S.A.R. grant FDCT 0090/2019/A2 and University of Macau grant MYRG2020-00039-FST.
Qingshan YANG is supported by National Natural Science Foundation of China(Grant NO.11401090, 11971097, 11971098) and the Fundamental Research Funds for the Central Universities(Grant NO.2412019FZ031).

\end{document}